\def\blfootnote{\xdef\@thefnmark{}\@footnotetext}
\newcommand\ccnote{
    \blfootnote{The research is supported by the National Key R and D Program of China 2020YFA0713100 and NSFC No 11721101.}
}
\numberwithin{equation}{section}
\renewcommand{\le}{\leqslant}
\renewcommand{\ge}{\geqslant}
\renewcommand{\mathbb}{\varmathbb}
\newtheorem{theorem}{Theorem}[section]
\newtheorem{lemma}[theorem]{Lemma}
\newtheorem{corollary}[theorem]{Corollary}
\newtheorem{proposition}[theorem]{Proposition}
\newtheorem{definition}[theorem]{Definition}
\newtheorem{remark}[theorem]{Remark}
\newtheorem{claim}[theorem]{Claim}
  \newcommand{\Ss}{\mathbf{S}}
 \newcommand{\RR}{\mathbf{R}}  
 \renewcommand{\SS}{\mathbf{S}}  
 \newcommand{\NN}{\mathbf{N}}
\def\begfig {
\begin{figure}
\small }
\def\endfig {
\normalsize
\end{figure}
}
\address{
\newline 
Yuchen Bi:Institute of Mathematics, Academy of Mathematics and Systems
  Science, University of Chinese Academy of Sciences, Beijing, 100190, P. R.
China
\newline
{\tt biyuchen15@mails.ucas.ac.cn}
\newline
 Jie Zhou:
School of Mathematical Sciences, Capital Normal University,
Beijing 100048, P.R. China.
\newline
{\tt Email:zhoujiemath@cnu.edu.cn}
}
\begin{document}

\thispagestyle{empty}

\ccnote

\vspace{1cm}


\begin{center}
\begin{huge}
\textit{Bi-Lipschitz rigidity for $L^2$-almost CMC surfaces}


\end{huge}
\end{center}

\vspace{1cm}


\begin{center}
\begin{minipage}[t]{.28\textwidth}
\begin{center}
{\large{\bf{Yuchen Bi,  Jie Zhou}}} \\
\vskip0.15cm
\end{center}
\end{minipage}
\end{center}

\vspace{1cm}

\vspace{1cm}
\vspace{1cm}
\noindent \textbf{Abstract.} \textit{For smooth surfaces properly immersed in the unit ball of $\RR^n$ with density close to one and small Willmore energy, the optimal a priori estimate(bi-Lipschitz and $W^{2,2}$ parametrization)is provided.  As an application, we discuss the quantitative rigidity for $L^2$-almost CMC surfaces.}
\vskip0.3cm

\noindent \textbf{Keywords.} Critical Allard regularity, CMC surfaces, Quantitative rigidity
\vspace{0.5cm}
\section{Introduction}

   In geometric measure theory, one of the most important theorem is the Allard regularity theorem\cite{Al}, which says if an integral $n$-varifold $V=\underline{v}(M,\theta)$ in the unit ball $B^{n+k}$ of $\RR^{n+k}$ has density close to one and small mean curvature in the super critical sense($p>n$):
   \begin{align}\label{critical allard}
   \frac{\mu_V(B^{n+k}(0))}{\omega_n}\le 1+\delta \text{ and } \big(\int_{B^{n+k}}|\underline{H}|^{p}d\mu_V\big)^{\frac{1}{p}}\le \delta\ll 1,
   \end{align}
   then $sptV$ is a $C^{1,\alpha=1-\frac{n}{p}}$ graph with estimated norm in a small neighborhood of $0\in sptV$.  
   Based on Allard's work, Duggan  proved\cite{Du1} the graph is in fact $W^{2,p}$ and the density function $\theta\in W^{1,p}$ under the same condition.

In Allard's original work, he also considered the critical case and proved\cite[Lemma 8.4]{Al} the volume rigidity in the Hausdorff sense. That is, if the critical integral $\int|\underline{H}|^nd\mu_V$ of the generalized  mean curvature is small, then
    \begin{align*}
    \mu_V(B_1(0))\le (1+\delta)\omega_n\Rightarrow d_{H}(spt\mu_V\cap B_{1-\Psi}(0),T\cap B_{1-\Psi}(0))\le \Psi(\delta),
    \end{align*}
    where $\Psi=\Psi(\delta)\to 0$ as $\delta\to 0$ and $T$ is an $n$-dimensional  plane. In a recent work, the second author proved\cite{Z22} the Lipschitz approximation and Reifenberg condition for two dimensional varifolds with perpendicular generalized mean curvature in the critical case $\underline{H}\in L^2(d\mu_V)$, and hence bi-H\"older ($C^{\alpha=1-\Psi(\delta)}$) regularity follows by Reifenberg's topological disk theorem\cite{R60}. For higher dimension, Menne warmly reminded us that in the case of integral varifolds with critical Allard condition \eqref{critical allard},  the bi-H\"older regularity also hold, as a consequence of  his previous works \cite{M09}, \cite{M10}.   Notice that by Brakke's result\cite{B-1978}(see also \cite[Theorem 1.18]{T-2019}), the generalized mean curvature of an integral varifold is  perpendicular.
   
       To investigate whether a critical Allard  condition \eqref{critical allard} provides the bi-Lipschitz regularity for varifolds, it is natural to first ask whether such an a priori estimate holds for smooth submanifolds. This article gives an affirmative answer in dimension two. 

    \begin{theorem}[$\textbf{Bi-Lipschitz a priori estimate}$]\label{bi-lip}
 For any $n>2$,  there exists a function $\Psi(\delta)=\Psi(\delta| n)$ satisfying $\lim_{\delta\to 0}\Psi(\delta)=0$ such that the following holds.

   Let $F:(\Sigma,p)\to (B_1^n(0),0)$ be a proper immersion with $F(p)=0$ and induced metric $g=dF\otimes dF$, $\mu_g$ be the induced volume measure.  If \begin{align*}
  \mu_g(\Sigma)\le \pi(1+\delta)  \text{ and } \int_{\Sigma}|\vec{H}|^2d\mu_g\le \delta,
  \end{align*}
  then
  $$\int_{\Sigma^{\frac{1}{20}}(p)}|A|^2\le \Psi(\delta).$$
  Moreover, for $\delta$ small, there exists a topological disk $U(p)\supset \Sigma^{\frac{1}{40}}(p) ($the connected  component of  $F^{-1}(B_{\frac{1}{40}}(F(p)))$  containing  $p )$ such that $F:U(p)\to \RR^n$ is an embedding with $F(\partial U(p))\subset B_{\frac{1}{40}}^c(0)$ and $(U(p),g)$ is $1+\Psi(\delta)$ conformal bi-Lipschitz homeomorphic to $D_{\frac{1}{40}}$. More precisely,  there exists a conformal parametrization $\varphi: D_{\frac{1}{40}}\to U(p)$ such that $\varphi^{*}g=e^{2u}(dx^1\otimes dx^1+dx^2\otimes dx^2)$ and
  $$\|u\|_{C^0(D_{\frac{1}{40}})}+\|Du\|_{L^2(D_{\frac{1}{40}})}\le \Psi(\delta).$$
  \end{theorem}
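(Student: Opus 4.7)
The plan is to combine the bi-Hölder structure coming from the critical Allard regularity (as recalled in the introduction) with a Gauss--Bonnet argument to extract quantitative $L^2$ control of the second fundamental form, and then to invoke the Müller--Šverák theory of $W^{2,2}$ conformal immersions to build the isothermal parametrization. First I would apply the monotonicity formula and the Li--Yau inequality to obtain $\theta(p)=1+\Psi(\delta)$ and $\mu_g(\Sigma\cap B_r(0))/(\pi r^2)\le 1+\Psi(\delta)$ for $r\in(\tfrac{1}{80},\tfrac12)$; combined with the critical Allard/Menne bi-Hölder estimate this implies that a neighbourhood of $p$ is mapped bi-Hölder by $F$ onto a set Hausdorff-$\Psi(\delta)$-close to a flat disk in the tangent plane $T_p\Sigma$. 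For $\delta$ small, the connected component $U(p)$ of $F^{-1}(B_{1/40}(0))$ containing $p$ is therefore a topological disk, $F|_{U(p)}$ is an embedding, $F(\partial U(p))\subset\partial B_{1/40}(0)$, and by Sard's lemma the sub-level sets $U_r:=F^{-1}(B_r(0))\cap U(p)$ are topological disks with smooth Jordan boundary for a.e.\ $r\in(\tfrac{1}{40},\tfrac{1}{20})$.

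\textbf{Curvature smallness (main obstacle).} The Gauss equation $|A|^2=|\vec H|^2-2K$ in $\RR^n$ and Gauss--Bonnet on $U_r$ combine to give
\begin{equation*}
\int_{U_r}|A|^2\,d\mu_g = \int_{U_r}|\vec H|^2\,d\mu_g - 4\pi + 2\int_{\partial U_r}\kappa_g\,ds.
\end{equation*}
The positivity of the left-hand side already yields $\int_{\partial U_r}\kappa_g\ge 2\pi-\delta/2$, so what remains is the matching upper bound $\int_{\partial U_r}\kappa_g\le 2\pi+\Psi(\delta)$ for a suitable $r\in(\tfrac{1}{40},\tfrac{1}{20})$. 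I would obtain it by averaging Gauss--Bonnet in $r$ through the coarea formula, using the monotonicity identity to force the tangential position vector $F^T$ to be $L^2$-close to $F$ and hence the length of $\partial U_r$ to be close to its flat value $2\pi r$ on a positive-measure set of radii. Selecting such an $r$ then yields $\int_{\Sigma^{1/20}(p)}|A|^2\le\Psi(\delta)$. This is the main technical step, since the bi-Hölder approximation provides no pointwise $C^1$ control of $\partial U_r$ and the geodesic-curvature integral has to be handled indirectly via this smooth coarea/Gauss--Bonnet averaging.

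\textbf{Isothermal parametrization.} With the curvature estimate in hand, uniformization produces a conformal diffeomorphism $\varphi:D_{1/40}\to U(p)$; writing $\varphi^*g=e^{2u}|dx|^2$, the factor $u$ satisfies the Liouville equation $-\Delta u=K\,e^{2u}$, and the pointwise bound $|K|\le\tfrac12|A|^2$ gives $\|\Delta u\|_{L^1(D_{1/40})}\le\Psi(\delta)$. Fixing a three-point normalization of $\varphi$ compatible with the Hausdorff closeness of $U(p)$ to the flat disk, a Wente/Brezis--Merle estimate in two dimensions combined with the Müller--Šverák theory of $W^{2,2}$ conformal immersions of small total curvature then yields $\|u\|_{C^0(D_{1/40})}+\|Du\|_{L^2(D_{1/40})}\le\Psi(\delta)$, which translates into the claimed $(1+\Psi(\delta))$ conformal bi-Lipschitz bound.
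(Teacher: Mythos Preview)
Your approach is genuinely different from the paper's, and the key step contains a real gap.

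\textbf{What the paper does.} The paper never touches Gauss--Bonnet or boundary geodesic curvature. Instead it runs a two-stage compactness/contradiction argument. First (Proposition~\ref{non-concentration of total curvature}) it shows that under the hypotheses the $L^2$-curvature cannot concentrate: if $\int_{B_{r_i}(q_i)}|A_i|^2\ge\varepsilon_0$ with $r_i\to0$, one blows up at $q_i$ at scale $r_i$, applies the Sun--Zhou compactness (Theorem~\ref{compactness}) to pass to a limit, and uses the fact that $\int|\vec H_i|^2\to0$ together with $\Delta F_i=e^{2u_i}\vec H_i$ to upgrade the weak $W^{2,2}$ convergence to \emph{strong} $W^{2,2}$ convergence. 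The limit is a minimal surface of density $<2$, hence a plane, contradicting the persistence of $\varepsilon_0$ curvature. With non-concentration in hand, a second contradiction argument of the same flavour gives $\int_{\Sigma^{1/20}}|A|^2\le\Psi(\delta)$, and the isothermal parametrization is then quoted from \cite{Sun-Zhou}. The whole mechanism is that $\vec H\to0$ forces strong convergence of $\nabla^2 F$, so no boundary term ever has to be estimated.

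\textbf{The gap in your argument.} Your crucial step is the upper bound $\int_{\partial U_r}\kappa_g\le 2\pi+\Psi(\delta)$, and the mechanism you propose---control on $L(r)=\mathrm{length}(\partial U_r)$ via monotonicity---does not deliver it. The length being close to $2\pi r$ says nothing about $\int\kappa_g$ without differentiating, and the first variation gives $L'(r)=\int_{\partial U_r}\kappa_g\,/|\nabla^\Sigma\rho|\,ds$, not $\int\kappa_g\,ds$; the discrepancy involves $1-|\nabla^\Sigma\rho|$ multiplied by $\kappa_g$, on which you have no control. If instead you use the explicit formula coming from $\partial U_r\subset\{|F|=r\}$, namely $\kappa_g=(1+\langle A(\tau,\tau),F^\perp\rangle)/|F^T|$, then after averaging in $r$ and applying coarea the error term is $\int |A|\,|F^\perp|/|F|\,d\mu_g\le \Psi(\delta)^{1/2}\|A\|_{L^2}$. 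This produces an inequality of the shape $\int_{U_r}|A|^2\le\Psi(\delta)+C\Psi(\delta)^{1/2}\|A\|_{L^2(U_R)}$ with $r<R$, so the regions do not nest in the direction that would allow absorption, and you have no a priori bound on $\|A\|_{L^2(U_R)}$ to feed in. The argument as written is circular; it may be salvageable with an additional iteration over dyadic scales, but that is substantial extra work you have not indicated, and it is exactly the difficulty the paper's compactness route is designed to avoid.
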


    In fact, the question is reduced to surfaces with small total curvature, since it is proved \cite{T94,T95,Sun-Zhou} that surfaces with small second fundamental form in $L^2$ are bi-Lipschitz conformal to disks and is weakly compact in $W^{2,2}$. The key observation in the present paper is the weak compactness become strong when $\int_{\Sigma_i}|H_i|^2d\mu_{g_i}\to 0$, which allows us to use compactness argument similar as in\cite{W05} to get the above bi-Lipschitz a priori estimate in the critical case.  Since there is no such strong compactness result to use, the non-smooth case is much more complicate.  For more extending discussion, we refer to \cite{BZ-2022}.

     An interesting application for considering the optimal regularity and the control of the second fundamental form(they are equivalent) is the question of quantitative rigidity for surfaces with $L^2$-almost constant mean curvature(CMC). For CMC hypersurfaces,  Alexandrov's well-known theorem says the only boundary of a bounded $C^2$ domain in $\RR^{n+1}$ with constant mean curvature is the round sphere. There are many quantitative rigidity results considering bounded domain $\Omega\subset\RR^{n+1}$ with small $C^0$ or $L^2$ Alexandrov's deficit:
    \begin{align*}
    \delta_0(\Omega):=\frac{\|H-H_0\|_{C^{0}(\partial \Omega)}}{H_0}\le \varepsilon \ \text{  or   } \ \delta_2(\Omega)=\big(\frac{1}{\mathcal{H}^n(\partial \Omega)}\int_{\partial \Omega}|\frac{H}{H_0}-1|^2d\mathcal{H}^n\big)^{\frac{1}{2}}\le \varepsilon,
    \end{align*}
 where $H_0=\frac{n\mathcal{H}^n(\partial \Omega)}{(n+1)\mathcal{H}^{n+1}(\Omega)}$. For example, in the convex setting, the Hausdorff distance between $\partial \Omega$ and a single round sphere is controlled by $\delta_0(\Omega)$ in a quantitative way\cite{Di,Mo,Sc,An}. But in the non-convex case, it is shown \cite{Bu,CiMa,DeMaMiNe} there may occur disjoint union of touching balls as collapsing models. It is called bubbling phenomenon.
   \begin{figure}[H]
	\begin{center}
	
		\includegraphics[width=0.45\linewidth]{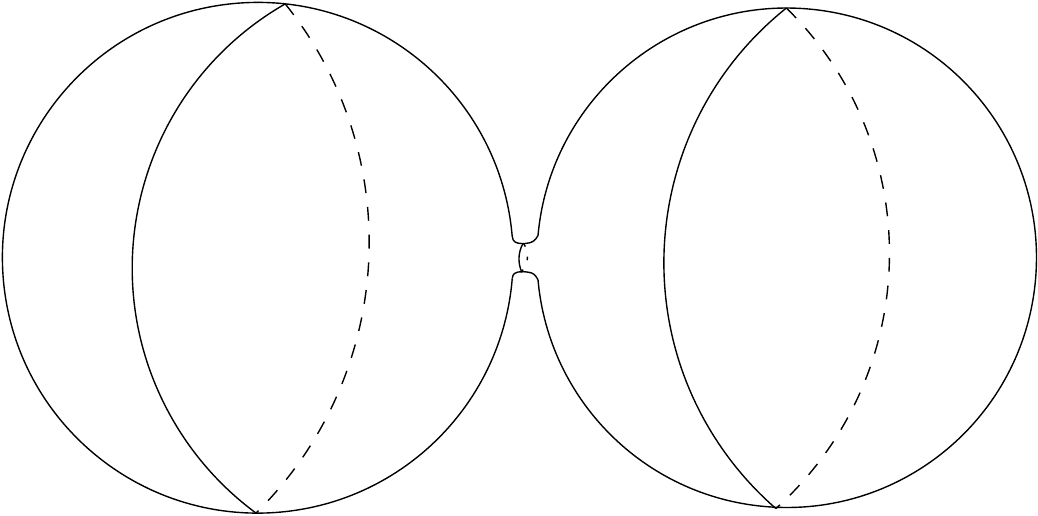}
        \caption{Constant mean curvature balls connected by small catenoid}
        \label{figure: collapse}
    \end{center}
   \end{figure}

   Recently, beginning by Ciraolo and Maggi\cite{CiMa}, there are a series of results on the quantitative rigidity of almost CMC hypersurfaces in the non-convex setting\cite{CiVe,CiVe2,CiRoVe,CiFiMaNo,DeMa}. These works dealt with hypersurfaces in the Euclidean space, Hyperbolic space, general space form and even with non-local mean curvature. The distance between $\Omega$ to the bubbling model is dominated by a precise power of $\delta_0(\Omega)$. Relaxing $\delta_0(\Omega)\le \delta$ to a weaker mean convexity assumption $H_{\partial\Omega_i}\ge \kappa n$, Delgadino, Maggi, Mihaila and Neumayer proved\cite{DeMaMiNe} a sequence of crystals $\Omega_i$ with $\delta_2(\Omega_i)\to 0$ will converge in the Hausdorff sense to a collection of touching balls with the same radius. In the present paper, as an application of our optimal a priori estimate, we considering an $L^2$-condition similar to $\delta_2(\Omega)$(see Lemma \ref{check condition} and Corollary \ref{DMMN setting} for detailed discussion)  and get the following Lipschitz quantitative rigidity for $L^2$-almost CMC surfaces.
   \begin{theorem}\label{main theorem}
   For any $\alpha \in (0,\frac{1}{2})$, there exists $\varepsilon=\varepsilon(\alpha)>0$ such that if $F:\Sigma\to {\RR}^3$ is an immersion of a closed smooth surface satisfying
\begin{align*}
\int_{\Sigma}|H-\bar{H}|^2d\mu_g\le \varepsilon \text{ and }
 \int_{\Sigma}|H|^2d\mu_g\le 32\pi(1-\alpha),
\end{align*}
where $H=\langle \vec{H},\vec{N}\rangle$ and $\bar{H}=\frac{1}{\mu_g(\Sigma)}\int_{\Sigma}Hd\mu_{g}$, then
 $$\Sigma\approx S^2\textbf{ $($Topological  Rigidity$)$}.$$

Moreover, after scaling such that $Area(\Sigma)=4\pi$, there exists $\bar{F}:{\SS}^2\to F(\Sigma)\subset {\RR}^3$ with  $d\bar{F}\otimes d\bar{F}=e^{2u}g_{{\SS}^2}$ such that after appropriate translations and rotations,
\begin{align*}
\|\bar{F}-id_{{\SS}^2}\|_{W^{2,2}}+\|u\|_{L^\infty}\le \Psi(\varepsilon),
\end{align*}
where $\Psi(\varepsilon)$ is a function such that $\lim_{\varepsilon\to0}\Psi(\varepsilon)=0$.
   \end{theorem}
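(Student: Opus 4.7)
The plan is a compactness--contradiction argument built on Theorem \ref{bi-lip}. Since both $\int_\Sigma|H|^2 d\mu_g$ and $\int_\Sigma|H-\bar H|^2 d\mu_g$ are scale invariant, I normalize by fixing $\mu_g(\Sigma)=4\pi$; Cauchy--Schwarz then gives $\bar H^2\cdot 4\pi\le \int|H|^2\le 32\pi(1-\alpha)$, hence $|\bar H|\le \sqrt{8(1-\alpha)}<2\sqrt 2$ strictly, where $2\sqrt 2$ is exactly the mean curvature of two equal round spheres with total area $4\pi$. This strict gap is what will separate the single-bubble from the two-bubble regime. Assume by contradiction a sequence $F_i:\Sigma_i\to\RR^3$ with $\varepsilon_i\to 0$ satisfying the hypotheses but violating the conclusion.

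At each point $p\in\Sigma_i$ and sufficiently small scale, a monotonicity/covering argument combined with the global Willmore bound $\int|H_i|^2\le 32\pi(1-\alpha)$ ensures that the hypotheses of Theorem \ref{bi-lip} hold away from a controlled finite set of concentration points. Theorem \ref{bi-lip} then supplies uniform $W^{2,2}$-conformal disk charts with $\int_U|A_i|^2\le\Psi(\delta)$ and uniform bi-Lipschitz constants. Applying varifold compactness (Allard together with the $W^{2,2}$ bound) extracts a subsequential limit whose support is a finite disjoint union $\bigsqcup_k S_k$ of closed smoothly immersed CMC surfaces of common mean curvature $\bar H_\infty=\lim\bar H_i$, with $\sum_k\int_{S_k}|H|^2\le 32\pi(1-\alpha)$. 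By Hopf's theorem each genus-zero component is a round sphere; by Li--Yau each component contributes at least $16\pi$ to the Willmore energy, so at most one component can occur; and the area normalization combined with Willmore-type lower bounds for CMC surfaces of higher genus excludes non-spherical components. Hence the limit is a single round sphere.

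Topological rigidity of $\Sigma_i$ then follows from connectedness together with the uniform $L^\infty$ bound on the conformal factor in Theorem \ref{bi-lip}, which forbids the long thin cylindrical necks that would be needed to transmit non-trivial topology past to the limit. For the quantitative estimate, I would uniformize $\Sigma_i\simeq\SS^2$ and patch the local conformal charts of Theorem \ref{bi-lip} into a global conformal parametrization $\bar F_i:\SS^2\to F_i(\Sigma_i)$. The strong $W^{2,2}$ compactness pointed out by the authors right after Theorem \ref{bi-lip}---which stems precisely from $\int|H_i-\bar H_i|^2\to 0$ converting weak compactness into strong---upgrades subsequential convergence $\bar F_i\to\bar F_\infty$ to strong $W^{2,2}$ convergence, and after fixing the M\"obius gauge (e.g.\ pinning the images of three prescribed marked points) and the Euclidean rigid-motion ambiguity one obtains $\bar F_\infty=\mathrm{id}_{\SS^2}$, yielding the stated $W^{2,2}$ and $L^\infty$ bounds.

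The principal obstacle is the no-neck/no-bubble analysis: one must exclude both the two-touching-spheres configuration of Figure \ref{figure: collapse} (handled by the strict inequality $32\pi(1-\alpha)<32\pi$) and any higher-genus limiting components, all without the luxury of an embeddedness hypothesis. The strong $W^{2,2}$ compactness noted after Theorem \ref{bi-lip}, which fails in the generic critical Allard setting, is the technical lever that both glues the local charts into a coherent global parametrization and converts qualitative convergence into the quantitative modulus $\Psi(\varepsilon)$.
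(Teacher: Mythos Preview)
Your overall strategy---contradiction, compactness, identify the limit as a round sphere---matches the paper's, but two steps diverge in ways that leave genuine gaps.

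First, the paper does \emph{not} allow a finite concentration set. The key lemma you are missing (Lemma~\ref{non-concentration criteria}) is that the global bound $\int_{\Sigma_i}|H_i|^2\le 32\pi(1-\alpha)$, once combined with the non-concentration of Willmore energy coming from the $L^2$-almost CMC condition (Lemma~\ref{non-concentration radius}), forces via Simon's monotonicity formula the density ratio to satisfy $\mu_{g_i}(B_r(p))/\pi r^2\le 2(1-\gamma)$ for \emph{every} $p\in\Sigma_i$ and all small $r$. Thus the critical Allard hypothesis of Theorem~\ref{bi-lip} holds at every point, there is no exceptional set, and the convergence is strong $W^{2,2}$ everywhere with no bubbling analysis needed.

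Second, and more seriously, your identification of the limit has a gap. Because the density is everywhere below $2$, the paper shows (inside Proposition~\ref{convergence}) that the limit $F_\infty$ is an \emph{embedding}, and then invokes Alexandrov's theorem---which needs embeddedness but no topological hypothesis---to conclude it is a round sphere. You instead invoke Hopf, which requires the limit component to have genus zero, something you have not established; and your proposed exclusion of higher-genus components via ``Willmore-type lower bounds for CMC surfaces of higher genus'' does not work as stated: immersed CMC tori (Wente tori) exist in $\RR^3$, and the Willmore lower bound for tori is $8\pi^2<32\pi$, so for small $\alpha$ the bound $32\pi(1-\alpha)$ does not exclude them. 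Embeddedness of the limit plus Alexandrov is what actually closes this.

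Finally, for the quantitative estimate the paper does not patch local conformal charts into a global one. Once the limit is the round sphere, Gauss--Bonnet gives $\int_{\Sigma_i}|A_i^\circ|^2\to 0$, and the $W^{2,2}$ and $L^\infty$ bounds follow directly from the De~Lellis--M\"uller/Lamm--Sch\"atzle theorem (Theorem~\ref{quantitative umbilical}); your chart-patching and M\"obius-gauge-fixing step is unnecessary and would itself require substantial work.
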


   Here we do not need mean convexity assumption.  For  getting the topological rigidity and global bi-Lipschitz regularity, we need the Willmore energy restriction $\int_{\Sigma}|H|^2d\mu_g\le 32\pi(1-\alpha)$  to exclude the bubbling counterexample(Figure \ref{figure: collapse}).  In fact, the stereographic projection of a sequence of Lawson's minimal surfaces $\xi_{g,1}$\cite{L70}(after proper conformal transformations) converges to two spheres with mean curvature close to $2$ in the $L^2$ sense but their genuses can be arbitrary large. The reason is the Willmore energy restriction is not satisfied($\int_{\Sigma_i}|H|^2d\mu_{g_i}\to 32 \pi$ \cite{KLS}).  In the viewpoint of Allard's regularity theorem, the $L^2$-almost CMC assumption $\int_{\Sigma}|H-\bar{H}|^2d\mu_g\le \varepsilon$ is critical in dimension two and the bi-Lipschitz regularity is optimal.

    The topological rigidity is a key step for the bi-Lipschitz rigidity. In fact, if $\Sigma\approx S^2$, then Gauss-Bonnet formula and the Willmore energy restriction implies
   \begin{align*}
   \int_{\Sigma}|A^{\circ}|^2d\mu_g=\frac{1}{2}\int_{\Sigma}|H|^2d\mu_g-4\pi\chi(\Sigma)\le 8\pi(1-2\alpha),
   \end{align*}
   where $A^\circ$ is the trace free part of the second fundamental form.
   A surface is called umbilical if $A^\circ \equiv 0$.  Codazzi's classical theorem says all umbilical surfaces in ${\RR}^3$ are parts of a round sphere or a plane and hence must has constant mean curvature. Thus Alexandrov's theorem can be regarded as an extension of Codazzi's theorem. Before quantitative rigidity for almost CMC surfaces, there are quantitative rigidity for almost umbilical surfaces. That is the work\cite{dLMu,dLMu2}  of De Lellis and M\"uller in ${\RR}^3$ and the work  \cite{LaSc, LaNg} of Lamm, Sch\"atzle and Nguyen in arbitrary codimension.
   \begin{theorem}[$\textbf{De Lellis--M\"{u}ller(2005,2006); Lamm--Sch\"{a}tzle(2014)}$]\label{quantitative umbilical}
Assume $\Sigma\subset {\RR}^n$  is a topological sphere and $Area(\Sigma)=4\pi$. If
\begin{align*}
\int_{\Sigma}|A^\circ|^2d\mu_g<2e_n, \text{ where } e_n:=\left\{
\begin{aligned}
4\pi &\ \text{ for } n=3, \\
\frac{8\pi}{3} &\ \text{ for } n=4, \\
2\pi &\ \text{ for } n\ge 5,
\end{aligned}
\right. \text{ is defined as in  \cite{Sch13},}
\end{align*}
then there exists a conformal parametrization $f:{\SS}^2\to \Sigma$ such that $f^*g=e^{2u}g_{{\SS}^2}$ and
\begin{align*}
\|f-id_{{\SS}^2}\|_{W^{2,2}}+\|u\|_{L^\infty}\le C\|A^\circ\|_{L^2(\Sigma)}.
\end{align*}
\end{theorem}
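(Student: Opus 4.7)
The plan is to follow the strategy of De Lellis--M\"uller in codimension one, extended to arbitrary codimension by Lamm--Sch\"atzle. By the uniformization theorem, since $\Sigma$ is a topological sphere, there exists a conformal diffeomorphism $f_0 : {\SS}^2 \to \Sigma$ with $f_0^*g = e^{2u_0}g_{{\SS}^2}$. The objective is to produce, by composing with an appropriate M\"obius transformation $\phi$ of ${\SS}^2$ and a rigid motion in $\RR^n$, a parametrization $f = f_0 \circ \phi$ for which the claimed estimates hold.

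The analytic core is the Liouville-type equation for the conformal factor $u$,
\begin{equation*}
-\Delta_{{\SS}^2} u + 1 = K_g\, e^{2u},
\end{equation*}
where $K_g$ is the Gauss curvature of $\Sigma$. The Gauss equation gives $|A|^2 = 2|A^\circ|^2 + 2K_g$ in codimension one (with a tensorial analogue in higher codimension), and Gauss--Bonnet fixes $\int K_g\, d\mu_g = 4\pi$. Substituting and using $\int_\Sigma |A^\circ|^2 < 2e_n$ together with the normalization $Area(\Sigma) = 4\pi$ places the right-hand side of the Liouville equation in a borderline Orlicz class; Moser--Trudinger combined with a Br\'ezis--Merle type estimate then yields the bound $\|u\|_{L^\infty} \le C\|A^\circ\|_{L^2(\Sigma)}$. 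The subcritical threshold $2e_n$ is precisely what prevents a concentration blow-up at this stage.

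Once $u$ is controlled, the remaining six-parameter M\"obius freedom is fixed by imposing H\'elein's balancing gauge $\int_{{\SS}^2} x\, e^{2u}\, d\mu_{{\SS}^2} = 0$, whose solvability follows from an implicit function / degree argument once $\|u\|_\infty$ is small. With this gauge fixed and after a suitable translation and rotation in ${\RR}^n$, the map $\bar f := f_0 \circ \phi$ satisfies a vector Laplace--type equation $\Delta_{{\SS}^2}\bar f = - 2 e^{2u}\vec H_\Sigma\circ\bar f - 2\bar f$ whose right-hand side is pointwise controlled by the full second fundamental form. Elliptic $L^2$ regularity on ${\SS}^2$ (modulo the normalized kernel of $\Delta_{{\SS}^2} + 2$, which is exactly what the balancing gauge kills) then upgrades the $L^\infty$ bound on $u$ to $\|\bar f - id_{{\SS}^2}\|_{W^{2,2}} \le C\|A^\circ\|_{L^2(\Sigma)}$.

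I expect the principal obstacle to be the $L^\infty$ bound on $u$ with sharp linear dependence on $\|A^\circ\|_{L^2}$ under the optimal threshold $2e_n$. This is where the Jacobian-type cancellation structure of $K_g e^{2u}$ and Wente's inequality (or its spherical analogue) enter essentially; in codimension $n-2 \ge 2$, scalar tools such as the Hopf differential are no longer available, and Lamm--Sch\"atzle's refinement exploiting the full Gauss--Codazzi system for the normal bundle of $\Sigma$ becomes necessary to close the argument.
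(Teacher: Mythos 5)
This theorem is not proved in the paper at all: it is a background result quoted directly from \cite{dLMu,dLMu2} and \cite{LaSc}, and the paper simply invokes it (at the end of the proof of Proposition \ref{convergence}) without giving an argument. There is therefore no proof in the paper for your sketch to be compared against.

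Assessing your sketch against the cited references on its own terms, the overall skeleton (uniformization, Liouville equation for the conformal factor, M\"obius normalization, Wente-type estimates) is recognizable, but there are genuine gaps. First, the PDE you write for the reparametrized map, $\Delta_{{\SS}^2}\bar f = -2e^{2u}\vec H\circ\bar f - 2\bar f$, is not the equation a conformal immersion satisfies; the correct identity is $\Delta_{{\SS}^2}\bar f = e^{2u}\vec H$, and the $-2\bar f$ only appears after subtracting $id_{{\SS}^2}$, i.e.\ in the linearization, which is a different object. As written the elliptic estimate would not close. Second, Moser--Trudinger and Br\'ezis--Merle arguments give exponential integrability or $L^p$ control of $e^{2u}$ under an energy threshold, not the crucial \emph{linear} bound $\|u\|_{L^\infty}\le C\|A^\circ\|_{L^2}$; the linear dependence is produced by the Hardy-space/Wente mechanism applied to the Jacobian structure of $K_g e^{2u}$ combined with a carefully chosen M\"obius gauge (this is the heart of De Lellis--M\"uller and of Sch\"atzle \cite{Sch13}, where the sharp constants $e_n$ originate), and you only gesture at it at the end. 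Third, the gauge fixing cannot be deferred to an implicit-function step ``once $\|u\|_\infty$ is small'': without first fixing the M\"obius freedom the conformal factor can concentrate and no $L^\infty$ bound is available, so the order of your argument is circular. These are not cosmetic issues; they are precisely the points where the original proofs are delicate.
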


In dimension $n=3$, there holds exactly $8\pi(1-2\alpha)<2e_n$ and hence the discussion above reduces our bi-Lipschitz rigidity to the topological rigidity, which is the main difficulty we need to deal with. The compactness theorem of surfaces in ${\RR}^n$ with small total curvature\cite{Sun-Zhou} is strong enough for topological rigidity, but for getting the non-concentration of the $L^2$-norm of second fundamental form from the assumption on the $L^2$-almost constant property on the mean curvature, the critical difficulty should also be taken into account. That is where our local bi-Lipszhitz regularity Theorem \ref{bi-lip} comes in.

 We also consider the higher codimension case. More precisely, since there are no natural conception of CMC surfaces in higher codimension, motivatied by \cite{LaSc}, we consider closed surfaces in ${\RR}^n$ such that the mean curvature vector is close to the position vector in the $L^2$ sense, i.e.,
 \begin{align*}
 \int_{\Sigma}|\vec{H}+c F|^2d\mu_g\le \varepsilon.
 \end{align*}
 The model equation $\vec{H}+cF=0$ is exactly the minimal surface equation in the sphere if $c\neq 0$. So, the following result should be called the bi-Lipszhitz quantitative rigidity for $L^2$-almost minimal surfaces in the sphere.
 \begin{theorem}[See Corollary \ref{quantitative minimal}]For any $W<8\pi$, there exists $\varepsilon=\varepsilon(W)>0$ such that if an embedding  $F: \Sigma \to \RR^n$ of a sphere type surface $\Sigma\cong S^2$ with induced metric $g=dF\otimes dF$ satisfies
\begin{align*}
\mu_g(\Sigma)=4\pi, \frac{1}{4}\int_{\Sigma}|\vec{H}|^2d\mu_g\le W<8\pi ,
\end{align*}
and there exists a constant $c\in \RR$ such that
\begin{align*}
\int_{\Sigma}|\vec{H}+cF|^2d\mu_g\le \varepsilon^2,
\end{align*}
 then there exists conformal diffeomorphism  $\tilde{F}: {\SS}^2\to F(\Sigma)\subset \RR^n$ such that   $d\tilde{F}\otimes d\tilde{F}=e^{2u}g_{{\SS}^2}$ and
\begin{align*}
|c-2|+\|u-1\|_{C^0({\SS}^2)}+\|\tilde{F}-id_{{\SS}^2}\|\le \Psi(\varepsilon).
\end{align*}
\end{theorem}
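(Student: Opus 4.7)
The plan is to argue by contradiction via a compactness argument parallel to Theorem~\ref{main theorem}, but with the minimal-in-sphere equation $\vec H+cF=0$ replacing the CMC model. Suppose the conclusion fails: there exist $W<8\pi$ and a sequence of embeddings $F_i:\Sigma_i\cong S^2\to\RR^n$ with $\mu_{g_i}(\Sigma_i)=4\pi$, $\tfrac14\int|\vec H_i|^2\,d\mu_{g_i}\le W$, and constants $c_i$ with $\int|\vec H_i+c_iF_i|^2\,d\mu_{g_i}\to 0$, for which no uniform conformal approximation bound holds. After translating so that $\int_{\Sigma_i}F_i\,d\mu_{g_i}=0$, Simon's diameter inequality $\mathrm{diam}(\Sigma_i)\le C\sqrt{\mu_{g_i}(\Sigma_i)\cdot W(\Sigma_i)}$ together with a monotonicity-based lower density bound (there is no collapse to a point since $\mu_{g_i}(\Sigma_i)=4\pi$ is fixed) give uniform upper and lower bounds on $\|F_i\|_{L^\infty}$ and $\|F_i\|_{L^2}$. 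The triangle inequality $\|c_iF_i\|_{L^2}\le\|\vec H_i\|_{L^2}+\|\vec H_i+c_iF_i\|_{L^2}\le 2\sqrt{W}+o(1)$ then forces $|c_i|$ to be uniformly bounded, so up to a subsequence $c_i\to c_\infty\in\RR$.

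Since $W(\Sigma_i)<8\pi$ and $\Sigma_i\cong S^2$, the H\'elein / M\"uller--\v{S}ver\'ak / Kuwert--Sch\"atzle conformal parametrization compactness provides diffeomorphisms $\varphi_i:\SS^2\to\Sigma_i$ with $\varphi_i^*g_i=e^{2u_i}g_{\SS^2}$, $u_i$ uniformly bounded in $L^\infty$, such that after composing with a M\"obius transformation and a rotation one has $\varphi_i\rightharpoonup\varphi_\infty$ weakly in $W^{2,2}(\SS^2;\RR^n)$. The limit is a $W^{2,2}$-immersed sphere of area $4\pi$ and Willmore energy $\le W<8\pi$ satisfying $\vec H_\infty+c_\infty\varphi_\infty=0$ weakly in $L^2$. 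The maximum principle rules out $c_\infty=0$ (no closed minimal surface in $\RR^n$), so $\varphi_\infty(\SS^2)$ is a minimal $2$-sphere in $S^{n-1}(R)$ with $R=\sqrt{2/c_\infty}$; by Calabi's area quantization for minimal immersions $S^2\to S^{n-1}$, its Willmore energy lies in $\{4\pi\tfrac{m(m+1)}{2}:m\ge 1\}=\{4\pi,12\pi,24\pi,\ldots\}$, so $W<8\pi$ selects $m=1$ and the limit is totally geodesic in $S^{n-1}(R)$. The area normalization then gives $R=1$, hence $c_\infty=2$.

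To upgrade to strong convergence I write the immersion equation in isothermal coordinates as $\Delta_{\SS^2}\varphi_i=e^{2u_i}\vec H_i=-c_ie^{2u_i}\varphi_i+o_{L^2}(1)$: the uniform $L^\infty$-bound on $u_i$, the strong $L^2$-convergence of the right-hand side, and elliptic regularity on $\SS^2$ convert the weak $W^{2,2}$-convergence of $\varphi_i$ into strong $W^{2,2}$-convergence, which in turn forces $\int|A_i^\circ|^2\to 0$ and, by Theorem~\ref{quantitative umbilical} (De Lellis--M\"uller / Lamm--Sch\"atzle), $\|u_i\|_{C^0}\to 0$. A final rotation aligning $\varphi_\infty$ with $\mathrm{id}_{\SS^2}$ then yields $\|\varphi_i-\mathrm{id}_{\SS^2}\|_{W^{2,2}}+\|u_i\|_{C^0}+|c_i-2|\to 0$, contradicting the assumed failure of the conclusion. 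The main conceptual obstacle is the rigidity step identifying the limit: without Calabi's (or equivalently Bryant's) gap between $4\pi$ and $12\pi$ for Willmore spheres, a non-totally-geodesic minimal $S^2$ in a sphere with Willmore energy in $(4\pi,8\pi)$ could a priori appear as a limit, and the threshold $8\pi$ would not be sharp.
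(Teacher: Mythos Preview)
Your argument parallels the paper's at the rigidity stage (Calabi--Barbosa to pin down the limit, then De Lellis--M\"uller / Lamm--Sch\"atzle to conclude), but the compactness step has a real gap in exactly the range the paper singles out as non-trivial. The global sphere compactness you invoke --- uniform $L^\infty$ control of the conformal factor after M\"obius gauge --- is only available under the Sch\"atzle condition $\|A^\circ\|_{L^2}^2<2e_n$, which for $\Sigma\cong S^2$ reads $W<4\pi+e_n$. When $n\ge 4$ and $W\in[4\pi+e_n,8\pi)$ this fails, and there is no black-box bound on $u_i$ for your elliptic-upgrade step to lean on; the remark immediately following Corollary~\ref{quantitative minimal} flags precisely this range as the substantive content of the result.

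The paper closes this gap by using the hypothesis $\int|\vec H+cF|^2\to 0$ \emph{before} extracting a limit rather than after. Lemma~\ref{lemma: non-concentration of Willmore} shows the hypothesis forces Willmore non-concentration ($\int_{B_r(p)}|\vec H|^2\le 2\varepsilon^2 + Cr^2$), Lemma~\ref{non-concentration criteria} converts this together with $\int|\vec H|^2<32\pi$ into density non-concentration, and Proposition~\ref{non-concentration of total curvature} then yields local smallness of $\int|A|^2$ --- which is what actually produces the uniform conformal-factor bound via \cite{Sun-Zhou}. Proposition~\ref{convergence 2} packages all of this into strong $W^{2,2}$ convergence to a minimal sphere in $\SS^{n-1}(R_\infty)$. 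So the hypothesis is not merely used to identify the limit: it is what makes the compactness go through when $W\ge 4\pi+e_n$. (As a smaller point, your bound on $|c_i|$ requires a lower bound on $\|F_i\|_{L^2}$, which does not follow from a diameter lower bound alone; the paper instead uses the identity $c^*=2\mu_g(\Sigma)/\int|F|^2$ for the optimal $c$, see \eqref{bound of constant}.)
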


Different to Theorem \ref{main theorem}, in higher codimension,  we need to restrict the topology of $\Sigma$ to be a sphere. Otherwise, Lawson's minimal surfaces in $S^3\subset {\RR}^4$ with genus $g\neq 0$ are all counterexamples.  Comparing with Theorem \ref{quantitative umbilical}, we can also covering  $W\in [4\pi+e_n,8\pi)$ for $n\ge 4$ in the setting $\vec{H}$ close to $F$ in $L^2$.

The organization of the paper is as following. In section \ref{sec:Ctirtical allard}, we recall the compactness result for surfaces with small total curvature\cite{Sun-Zhou} and apply it to show the bi-Lipschitz a priori estimate. In section \ref{sec:quantitative rigidity}, we introduce the intermediate conception of $\gamma$ non-concentration of the area density to describe the collapsing phenomenon and use it to prove the bi-Lipschitz rigidity of $L^2$ almost CMC surfaces in ${\RR}^3$. In section \ref{sec: high codimension}, the higher codimension results are discussed, including a compactness theorem for surfaces in the sphere $\SS^{n-1}$.

We adopt the following notations in this article.
  \begin{align*}
  D_r(x)&=\{y\in \RR^2| |y-x|<r\}\\
  B_r(X)&=\text{ open geodesic ball in } \RR^n \text{ centered at } X \text{ with radius } r\\
  B_r^{\Sigma}(p)&= \text{ open geodesic ball in } \Sigma \text{ centered at } p \text{ with radius } r\\
  B_r^{F}(p)&=B_r(p)=F^{-1}(B_r(F(p)))\\
  \Sigma^r(p)&=\text{ the connected  component of } F^{-1}(B_r(F(p))) \text{ containing } p\\
  \mathcal{H}^2&=\text{ the two dimensional Hausdorff measure in }\RR^n\\
  {\Ss}^2&=\text{ the round sphere in some } {\RR}^3\subset {\RR}^n\\
  \Psi(\varepsilon| ...)&=\text{ a function with parameters such that } \lim_{\varepsilon\to 0}\Psi(\varepsilon|_...)=0 \text{ for fixed parameter}.
  \end{align*}


\section{Critical Allard regularity-the optimal a priori
estimate}\label{sec:Ctirtical allard}
In this section, we apply the compactness of surfaces in ${\RR}^n$ with small total curvature to prove the optimal a priori estimate for surfaces with critical Allard type condition.  We first recall the compactness in \cite{Sun-Zhou}. There \cite[Definition 1.1]{Sun-Zhou} the isothermal radius for a point $p$ on the Riemannian surface $(\Sigma,g)$ is defined by
\begin{align*}
i_g(p)=\sup\{r|\text{ there exists a topological disk } U(p)\supset B^{\Sigma}_r(p)\}.
\end{align*}
The main result is the isothermal radius estimate and the compactness theorem.
\begin{theorem}[\cite{Sun-Zhou}]\label{compactness}
For any $V>0$, there exists $\varepsilon_0(V)>0$  and $\alpha_0(V)>0$ such that for any properly immersed Riemannian surface $F: (\Sigma, g,p)\to (B_1(0),g_0,0)$  satisfying
\begin{align}\label{small totalcurvature}
\mu_g(\Sigma)\le V \text{ and }\int_{\Sigma}|A|^2d\mu_g\le \varepsilon_0(V),
\end{align}
there holds
\begin{align*}
i_g(x)\ge \alpha_0(V), \forall x\in \Sigma^{\frac{1}{2}}(p)
\end{align*}
Moreover, if there is a sequence of $F_i:(\Sigma_i,g_i, p_i)\to (B_1(0),g_0,0)$ satisfying $($\ref{small totalcurvature}$)$, then after passing to a subsequence, $F_i$ converges to some $W^{2,2}$ proper immersion $F:(\Sigma,g,p)\to (B_1,g_0,0)$ of a Riamannian surface $(\Sigma,g)$ with continuous metric $g$ in the  $\tau$-topology, that is
\begin{enumerate}
\item For any $q>1$, there exist smooth embedding $\Phi_k:\Sigma\to \Sigma_k$ such that
    \begin{align*}
    \Phi_k^{*}g_k\to g \text{ in } L^q_{loc}(d\mu_g);
    \end{align*}
\item the complex structure of $\Sigma_k$ converges to the complex structure of $\Sigma$;
\item $F_k\circ \Phi_k$ converges to $F$ weakly in $W^{2,2}_{loc}(\Sigma, {\RR}^n)$ and strongly in $W^{1,q}(\Sigma, {\RR}^n)$  for any $q>1$.
\end{enumerate}
\end{theorem}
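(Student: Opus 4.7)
The plan is to lean on the Hélein--Müller-Šverák theory for weakly conformal $W^{2,2}$ immersions of the disk: if $F:D\to \RR^n$ is such an immersion with $\int_D|A|^2\,d\mu_g$ below a universal threshold (the classical bound is $8\pi/3$), then after a $W^{2,2}$ reparametrization $F$ becomes conformal with $g = e^{2u}g_{\mathrm{euc}}$ and $\|u\|_{L^\infty}+\|\nabla u\|_{L^2}$ bounded by the total curvature, via the Gauss equation $-\Delta u = Ke^{2u}$ together with a Wente-type estimate. I would fix $V$ and choose $\varepsilon_0(V)$ small enough that this threshold holds on every disk-type subregion we encounter, and small enough that a local $\varepsilon$-regularity statement combined with monotonicity and $\mu_g(\Sigma)\le V$ controls the local topology.

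For the isothermal radius estimate I would argue by contradiction: suppose a sequence has $q_k\in \Sigma_k^{1/2}(p_k)$ with $i_{g_k}(q_k)\to 0$. The only obstructions to a small intrinsic ball being a topological disk are $L^2$ curvature concentration or two sheets coming together inside the same ambient ball. The first is excluded because $\int|A|^2\le \varepsilon_0$ is already small in total, so only finitely many concentration points can accumulate, and rescaling near one yields a non-trivial complete bubble carrying a definite amount of curvature, contradicting smallness. The second is excluded by monotonicity plus the area bound $V$, which caps the number of sheets through each ball. Extracting a uniform $\alpha_0(V)>0$ from this dichotomy is the content of the estimate.

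Once the isothermal radius is bounded below, the convergence statement follows the usual template. Cover each $\Sigma_k$ by a number of Hélein conformal charts bounded in terms of $V$; on each chart the conformal factor $u_k$ is uniformly bounded in $L^\infty\cap W^{1,2}$ and the weakly conformal immersion $F_k$ is uniformly bounded in $W^{2,2}$, since $|\Delta F_k|\le C|A_k|e^{u_k}$. Rellich then yields weak $W^{2,2}$ and strong $W^{1,q}$ convergence of the $F_k$ on each chart; the Gauss equation together with the $L^1$ bound on $Kd\mu_g$ gives $C^0$ convergence of the $u_k$, so the limit metric is continuous. The transition maps are biholomorphisms of planar domains which converge smoothly off a discrete set by removable-singularity arguments; gluing the limit charts produces the limit Riemann surface $\Sigma$ with its complex structure, and the smooth embeddings $\Phi_k$ are assembled from the chart identifications. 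The main obstacle is this final gluing step: one must coordinate all local chart changes uniformly in $k$ so that a single sequence $\Phi_k$ works on every chart simultaneously, and in particular rule out the collapse of thin annular overlaps. This is where the small total curvature hypothesis is essential, since it excludes bubble-tree formation and keeps the limit a genuine Riemannian surface rather than a node-connected union.
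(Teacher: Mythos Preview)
The paper does not prove this theorem at all: it is stated with the attribution \cite{Sun-Zhou} and introduced by ``We first recall the compactness in \cite{Sun-Zhou}.'' It is used as a black box throughout Section~2. So there is no proof in the paper to compare your proposal against.

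That said, your outline is broadly in the spirit of the Sun--Zhou argument (H\'elein/M\"uller--\v{S}ver\'ak conformal parametrization, Wente-type $L^\infty$ control of the conformal factor via $-\Delta u = Ke^{2u}$, uniform $W^{2,2}$ bounds on $F$ from $\Delta F = e^{2u}\vec H$, then Rellich and chart-gluing). One point to be careful about: your contradiction argument for the isothermal radius conflates two different obstructions. The definition of $i_g(p)$ here only asks for a topological disk $U(p)\supset B_r^\Sigma(p)$ admitting an isothermal coordinate; ``two sheets coming together in an ambient ball'' is an extrinsic phenomenon and is not by itself an obstruction to the intrinsic ball being a disk. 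The genuine obstruction is intrinsic: small geodesic balls failing to be disks because of short closed geodesics or nontrivial topology, which is ruled out by the small total curvature via the Gauss--Bonnet/monotonicity route in \cite{Sun-Zhou}. Likewise, the gluing step you flag is indeed the delicate part, and \cite{Sun-Zhou} handles it through a careful finite-atlas argument with uniform overlap control; your sketch gestures at this but would need the actual mechanism (uniform lower bound on overlap size from the isothermal radius estimate) to be a proof.
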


In experience, for bi-Lipschitz regularity, we often need the smallness of the total curvature\cite{T94,T95,MS95, Sun-Zhou}. The following proposition shows, under critical Allard condition, the total curvature will not concentrate. This leads us to the bi-Lipschitz regularity  for surfaces with small mean curvature.

\begin{proposition}[\textbf{Non-concentration of the total curvature}]\label{non-concentration of total curvature}
  For  any fixed $n\ge 2$, $\gamma\in (0,\frac{1}{2})$ and arbitrary $\varepsilon\ll 1$, there exist $\delta(\varepsilon)=\delta(\varepsilon,\gamma, n)>0$  and $a(\varepsilon)=a(\varepsilon,\gamma, n)>0$ such that  the following holds.

  Let $F:(\Sigma,p)\to (B^n_1(0),0)$ be a proper immersion such that $F(p)=0$ with induced metric $g=dF\otimes dF$  and  $\mu_g$ be the induced volume measure. If
  \begin{align*}
\mu_g(\Sigma)\le 2(1-\gamma)\pi  \text{ and } \int_{\Sigma}|\vec{H}|^2d\mu_g\le \delta(\varepsilon),
  \end{align*}
  then  for $\sigma=\frac{\gamma}{20(1-\gamma)+\gamma}$ and any $q\in \Sigma^{\sigma}(p)$,
  \begin{align*}
  \frac{r_{\varepsilon}(q)}{\sigma-|F(q)|}\ge a(\varepsilon),
  \end{align*}
  where   $r_\varepsilon(q)=\sup\{r\le \sigma| \int_{B_r^{F}(q)}|A|^2d\mu_g\le \varepsilon\}$.
\end{proposition}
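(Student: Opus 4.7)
The plan is a blow-up / contradiction argument. Suppose the conclusion fails at some fixed $\varepsilon>0$ (taken small enough below). Then there exist $F_i:(\Sigma_i,p_i)\to(B_1^n(0),0)$ satisfying the hypotheses with $\int|H_i|^2 d\mu_i\le\delta_i\to 0$, together with points $q_i\in\Sigma_i^\sigma(p_i)$ such that $r_i:=r_\varepsilon(q_i)$ satisfies $r_i/\rho_i\to 0$, where $\rho_i:=\sigma-|F_i(q_i)|$. Rescale by $\tilde F_i:=(F_i-F_i(q_i))/r_i$, giving proper immersions whose components through $q_i$ map into $B_{\rho_i/r_i}(0)$ with $\rho_i/r_i\to\infty$ and preserving $\int|\tilde H_i|^2 d\tilde\mu_i=\int|H_i|^2 d\mu_i\to 0$. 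For $i$ large we have $r_i<\rho_i\le\sigma$, so the definition of $r_\varepsilon$ together with the continuity of $r\mapsto\int_{B_r^F(q_i)}|A_i|^2$ (for smooth $F_i$) gives $\int_{B_1^{\tilde F_i}(q_i)}|\tilde A_i|^2 d\tilde\mu_i=\varepsilon$.

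The first key step is to establish a uniform area density bound strictly less than $2$. For every $X\in F_i(\Sigma_i^\sigma)$, $|X|\le\sigma$, so $B_{1-\sigma}(X)\subset B_1(0)$; Simon's monotonicity formula (with error from $\int|H_i|^2\le\delta_i$ tending to zero) yields
\begin{equation*}
\frac{\mu_{g_i}(F_i^{-1}(B_r(X)))}{\pi r^2}\le\frac{2(1-\gamma)}{(1-\sigma)^2}+o_{\delta_i}(1), \qquad r\le 1-\sigma.
\end{equation*}
The purpose of the choice $\sigma=\gamma/(20(1-\gamma)+\gamma)$ is the elementary identity $2(1-\gamma)/(1-\sigma)^2=(20-19\gamma)^2/(200(1-\gamma))$, which is strictly less than $2$ precisely when $\gamma(361\gamma-360)<0$, hence for all $\gamma\in(0,360/361)\supset(0,\tfrac{1}{2})$. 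Rescaling by $1/r_i$, for every fixed $R>0$ and large $i$ we obtain $\tilde\mu_i(\tilde F_i^{-1}(B_R(0)))\le\Lambda\pi R^2$ with a universal $\Lambda<2$.

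Next, pass to a varifold limit. By Allard's compactness the associated integral $2$-varifolds $\tilde V_i$ converge, along a subsequence, to a stationary integral $2$-varifold $\tilde V$ in $\RR^n$ with $\mu_{\tilde V}(B_R)\le\Lambda\pi R^2$ for every $R>0$; stationarity is forced because $|\delta\tilde V_i|$ vanishes in the limit thanks to $\int|\tilde H_i|^2\to 0$. Integer-valued density bounded by $\Lambda<2$ forces density $\equiv 1$ on $\spt\tilde V$, and Allard's regularity then upgrades $\tilde V$ to a smooth multiplicity-one minimal surface through the origin. A tangent cone at infinity of $\tilde V$ is a stationary integral $2$-cone of density $<2$, hence of density $1$, hence a plane; therefore $\Theta_{\tilde V}(X,\infty)=1=\Theta_{\tilde V}(X,0^+)$. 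The equality case of the monotonicity identity for stationary varifolds then forces $\tilde V$ to be a cone through each of its points, i.e., a plane through the origin.

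Finally, apply the Sun--Zhou compactness theorem (Theorem \ref{compactness}) to the restriction of $\tilde F_i$ to the component through $q_i$ of $\tilde F_i^{-1}(B_1(0))$, viewed as a proper immersion into $B_1(0)$ with area at most $\Lambda\pi$ and total curvature at most $\varepsilon<\varepsilon_0(\Lambda\pi)$ (fix $\varepsilon$ this small at the outset). The $\tau$-topology limit provided by that theorem must coincide with a smooth piece of the varifold limit $\tilde V$, hence a planar piece. The ``key observation'' of the paper --- that under $\int|\tilde H_i|^2\to 0$ the weak $W^{2,2}$ convergence of Sun--Zhou is promoted to strong $W^{2,2}$ convergence --- then gives
\begin{equation*}
0=\int_{B_1^{\tilde F}(q_\infty)}|\tilde A|^2 d\tilde\mu=\lim_{i\to\infty}\int_{B_1^{\tilde F_i}(q_i)}|\tilde A_i|^2 d\tilde\mu_i=\varepsilon,
\end{equation*}
contradicting $\varepsilon>0$. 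The main obstacle in the whole plan is precisely this strong-convergence upgrade under vanishing Willmore energy in the critical setting; the density-below-$2$ calculation, Allard-based classification of the limit as a plane, and the contradiction then follow essentially automatically.
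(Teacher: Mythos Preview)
Your overall architecture --- contradiction, blow-up, identify the limit as a plane, upgrade to strong $W^{2,2}$, contradict $\int_{B_1}|\tilde A_i|^2=\varepsilon$ --- matches the paper. But you are missing the one step that actually makes the endgame work: \emph{point selection}. In the paper the contradicting points $q_i$ are not arbitrary points where the ratio tends to zero; they are chosen as \emph{minimizers},
\[
\tau_i:=\frac{r_{\varepsilon}(q_i)}{\sigma-|F_i(q_i)|}=\inf_{q\in\Sigma_i^\sigma(p_i)}\frac{r_\varepsilon(q)}{\sigma-|F_i(q)|}.
\]
This minimality is precisely what gives, after rescaling, $\tilde r_\varepsilon(p)\ge\tfrac12$ for \emph{every} $p$ in the component with $|\tilde F_i(p)|\le \tfrac{1}{2\tau_i}\to\infty$. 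That is what allows Sun--Zhou to be applied around each such point, producing uniform isothermal charts and hence strong $W^{2,2}$ convergence on a region strictly containing $\overline{B_1}$.

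Without point selection you only know $\int_{B_1^{\tilde F_i}(q_i)}|\tilde A_i|^2=\varepsilon$, so Theorem~\ref{compactness} applied on $B_1$ yields isothermal radius control and (after the Willmore upgrade) strong $W^{2,2}$ convergence only on compact subsets of the interior --- say on $\Sigma_i^{1/2}(q_i)$. Nothing prevents all of the $\varepsilon$ worth of curvature from sitting in the annulus $B_1\setminus B_{1-\eta}$ for $\eta\to 0$; the function $r\mapsto\int_{B_r}|\tilde A_i|^2$ is merely monotone with value $\varepsilon$ at $r=1$. Your varifold argument correctly identifies the global limit as a multiplicity-one plane, but varifold convergence controls mass, not $\int|A|^2$, so it cannot rule out this boundary concentration. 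Hence the displayed equation $0=\lim_i\int_{B_1^{\tilde F_i}(q_i)}|\tilde A_i|^2=\varepsilon$ is not justified. The fix is exactly the paper's: take $q_i$ to be the minimizer, deduce $\int_{B_{1/2}^{\tilde F_i}(p)}|\tilde A_i|^2\le\varepsilon$ for all nearby $p$, and then run Sun--Zhou and the elliptic upgrade on balls of every radius $R$; this makes $\overline{B_1}$ an interior compact set and closes the argument.
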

\begin{remark} Let $\theta(X)=\sharp\{F^{-1}(X)\}$ be the number of the inverse image of $X\in \RR^n$. Then $V=\underline{v}(F(\Sigma),\theta)$ is a $2$-rectifiable varifold in $B_1(0)$ with corresponding Radon measure $\mu_V=\theta\mathcal{H}^2\llcorner F(\Sigma)$ and $\mu_V(B_r(F(q)))=\mu_g(B_r(q))$ by the area formula. So, our conditions are equivalent to  $\mu_V(B_1(0))\le 2(1-\gamma)$ and $\int_{B_1(0)}|\underline{H}|^2d\mu_V\le \delta$. Thus by the monotonicity formula\cite{S,KS}, $F$ is embedding in a small neighborhood of $0$ with estimated size. Moreover, the density condition  $\mu_g(\Sigma)\le 2(1-\gamma)\pi<2\pi$ already occurred in \cite{Du}, where Allard's $C^{1,\alpha}$ regularity theorem is improved to $W^{2,p}$ regularity for varifolds with bounded generalized mean curvature  and density strictly less than $2$(not need to be close to $1$).
\end{remark}
\begin{proof}
We argue by contradiction. Otherwise, there exists $\varepsilon_0\ll 1$ and a sequence of proper immersions $F_i:\Sigma_i\to B_1(0)\subset \RR^n$ such that $0=F_i(p_i)\in F_i(\Sigma_i)$,
 \begin{align*}
  \mu_{g_i}(\Sigma_i)\le 2(1-\gamma)\pi \text{ and } \int_{\Sigma_i}|\vec{H}_i|^2d\mu_{g_i} \le \delta_i\to 0,
  \end{align*}
  but there exists $q_i\in \Sigma^{\sigma}_i(p_i)$ such that
  $$ \tau_i:=\frac{r_{\varepsilon_0}(q_i)}{\sigma-|F_i(q_i)|}=\inf_{q\in \Sigma_i^{\sigma}(p_i)} \frac{r_{\varepsilon_0}(q)}{\sigma-|F(q)|}\to 0.$$
  Especially, we have $r_i:=r_{\varepsilon_0}(q_i)\le \sigma\tau_i \to 0$.  Letting $Y_i=F_i(q_i)$ and  $\tilde{F}_i=\frac{F_i-Y_i}{r_i}$, then we have
  \begin{align*}
  \int_{B_{\tau_i^{-1}}^{\tilde{F}_i}(q_i)}|\vec{\tilde{H}}_i|^2d\mu_{\tilde{g}_i}
  =\int_{B_{\tau_i^{-1}r_i}^{F_i}(q_i)}|\vec{H}_i|^2d\mu_{g_i}\le \int_{B_{2\sigma}^{F_i}(p_i)}|\vec{H}_i|^2d\mu_{g_i}\to 0.
  \end{align*}
 By the monotonicity formula, we know that for any $\delta\in (0,1)$, fixed $R>0$ and $b=\frac{20(1-\gamma)}{20-19\gamma}$, there holds
 \begin{align*}
 \frac{\mu_{\tilde{g}_i}(B^{\tilde{F}_i}_R(q_i))}{\pi R^2}
 &=\frac{\mu_{g_i}(B_{R r_i}^{F_i}(q_i)}{\pi (Rr_i)^2}\\
 &\le (1+\delta)\frac{\mu_{g_i}(B_b^{F_i}(q_i))}{\pi b^2}+ C_\delta \int_{B_b^{F_i}(q_i)}|\vec{H}_i|^2d\mu_{g_i}\\
 &\le (1+\delta)\frac{\mu_{g_i}( B_{b+|Y_i|}^{F_i}(p_i))}{\pi(b+|Y_i|)^2}\big(1+\frac{|Y_i|}{b}\big)^2+\Psi(\frac{1}{i}|\delta)\\
 (\text{by} |Y_i|\le \sigma )&\le \frac{(b+\sigma)^2(1+\delta)}{b^2}\big[(1+\delta)\frac{\mu_{g_i}(\Sigma_i)}{\pi}+ C_\delta\int_{\Sigma_i}|\vec{H}_i|^2d\mu_{g_i}\big]+\Psi(\frac{1}{i}|\delta)\\
 &\le2(1-\gamma)(1+\frac{\sigma}{b})^2(1+\delta)^2+\Psi(\frac{1}{i}|\delta),
 \end{align*}
 where we use the notation $\Psi(\frac{1}{i}|\delta)$ to denote a function tending to zero as $i\to +\infty$ which may change from line to line.  Thus by choosing $\sigma=\frac{\gamma}{20(1-\gamma)+\gamma}$ and $\delta$ small enough, we know
 \begin{align*}
 \frac{\mu_{\tilde{g}_i}(B_R^{\tilde{F}_i}(q_i))}{\pi R^2}\le 2(1-\frac{\gamma}{4})<2
 \end{align*}
  for $i$ large enough.
  Denote $d_i(p)=\sigma-|F_i(p)|$ for $p\in \Sigma_i^{\sigma}$. Whenever $|\tilde{F}_i(p)|\le \frac{1}{2\tau_i}$, we have
  \begin{align*}
  \tilde{r}_{\varepsilon_0}(p)=\frac{r_{\varepsilon_0}(p)}{r_i}\ge \frac{d_i(p)}{d_i(q_i)}\ge 1-\frac{|F_i(p)-Y_i|}{d_i(q_i)}\ge 1-\frac{r_i}{2\tau_id_i(x_i)}=\frac{1}{2}.
  \end{align*}
  This implies for any $p\in  B^{\tilde{F}_i}_R(q_i)$,
  $$\int_{B_{\frac{1}{2}}^{\tilde{F}_i}(p)}|\tilde{A}_i|^2d\mu_{\tilde{g}_i}\le \varepsilon_0$$
  and (by the monotonicity formula again)
  $$\frac{\mu_{\tilde{g}_i}(B_{R}^{\tilde{F}_i}(p)}{\pi R^2}\le 4.$$
  Thus by choosing $\varepsilon_0<\varepsilon_0(4)$ for $\varepsilon_0(4)$ defined in (Theorem \ref{compactness}, see also \cite[Proposition 1.4]{Sun-Zhou}), we know there exists $\alpha(4)>0$ such that
  $$i_{\tilde{g}_i}(p)\ge \alpha(4), \forall p\in  \bar{\Sigma}^{\frac{R}{2}}_i(q_i),$$
  where $i_{\tilde{g}_i}(p)$ is the isothermal radius.  Letting $R\to \infty$, by Theorem \ref{compactness}(see also \cite[Theorem 1.3]{Sun-Zhou}) and diagonal argument, we know that there exists a proper Riemmanian immersion $\tilde{F}_\infty:(\Sigma_\infty,g_\infty, q_\infty)\to (\RR^{n},g_{euc},0)$ such that $F_\infty(q_\infty)=0$ and the sequence  $\tilde{F}_i:(\Sigma_i,\tilde{g}_i,q_i)\to (\RR^{n},g_{euc},0)$ converges to  $\tilde{F}_\infty:(\Sigma_\infty,g_\infty, q_\infty)\to (\RR^{n},g_{euc},0)$ in the $\tau$-topology, which means the complex structure of $(\Sigma_i,\tilde{g}_i)$ converges to the complex structure of $\Sigma_\infty$ and $\tilde{F}_i$ converges to $\tilde{F}_\infty$ weakly in $W^{2,2}_{loc}$. By the weak lower semi-continuity of the Willlmore energy(similar to the proof of \cite[Appendix B]{Sun-Zhou},note $|\vec{H}|^2$ depends convexly on $D^2F$),  we know
   $$\int_{B_{R}^{\tilde{F}_\infty}(q_\infty)}|\vec{H}_\infty|^2dg_\infty\le \lim_{i\to \infty}\int_{B_{\tau_i^{-1}}^{\tilde{F}_i}(q_i)}|\vec{\tilde{H}}_i|^2d\mu_{\tilde{g}_i}=0, \forall R>0.$$
   This means the limit $F_\infty:\Sigma_\infty\to \RR^n$ is a minimal surface with density
   $$\Theta(\Sigma_\infty,+\infty)=\lim_{R\to +\infty}\lim_{i\to \infty}\frac{\mu_{\tilde{g}_i}(B_R^{\tilde{F}_i}(q_i))}{\pi R^2}\le 2(1-\frac{\gamma}{4})<2.$$
   So, $F_\infty(\Sigma_\infty)$ is a plane in $\RR^{n}$(\cite[Theorem 2.1]{KLS}).
   Moreover, under the isothermal coordinates $\varphi_i: D_{1}\to B_{\alpha(4)}^{\Sigma_i}(p)$, we know
  $$\tilde{g}_i=e^{2u_i}(dx^2\otimes dx^1 +dx^2\otimes dx^2) \text{  and  } \Delta \tilde{F}_i=e^{2u_i}\vec{\tilde{H}}_i.$$
  By the compensated compactness estimate (see \cite[Lemma 3.1]{Sun-Zhou}), we know $|u_i|\le C$ and hence
  $$\int_{D_{\frac{1}{2}}}|e^{2u_i}\vec{\tilde{H}}_i|^2dx^1dx^2\le C\int_{B_{\tau_i^{-1}}(0)}|\vec{\tilde{H}}_i|^2d\mu_{\tilde{g}_i}\to 0.$$
  So, by elliptic estimate we know (Note $\Delta \tilde{F}_\infty=0$ since $\vec{H}_\infty=0$)
  $$\|\tilde{F}_i-\tilde{F}_\infty\|_{W^{2,2}(D_{\frac{1}{2}})}\le C\|\tilde{F}_i-\tilde{F}_\infty\|_{L^2(D_{\frac{1}{2}})}+\Psi(\frac{1}{i})\to 0. $$
  By dominated convergence theorem(note we have the uniformly estimate $|u_i|\le C$), this implies
  $$\int_{B_1^{\tilde{F}_\infty}(q_\infty)}|A_\infty|^2d\mu_{g_\infty}=\lim_{i\to \infty} \int_{B_1^{\tilde{F}_i}(q_i)}|\tilde{A}_i|^2d\mu_{\tilde{g}_i}=\varepsilon_0 (\text{ since } \tilde{r}_{\varepsilon_0}(q_i)=\frac{r_{\varepsilon_0}(q_i)}{r_i}=1),$$
  which contradicts to $\tilde{F}_\infty(\Sigma_\infty)$ is a plane.
 \end{proof}
 \begin{corollary}[$\textbf{Critical Allard regularity--the a priori estimate}$] \label{critical allard}

 For any $n\ge 2$,  there exists a function $\Psi(\delta)=\Psi(\delta| n)$ satisfying $\lim_{\delta\to 0}\Psi(\delta)=0$ such that the following holds.

   Let $F:(\Sigma,p)\to (B_1^n(0),0)$ be a proper immersion with $F(p)=0$ and induced metric $g=dF\otimes dF$, $\mu_g$ be the induced volume measure.   If
\begin{align*}
  \mu_g(\Sigma)\le \pi(1+\delta)  \text{ and } \int_{\Sigma}|\vec{H}|^2d\mu_g\le \delta,
  \end{align*}
  then
  $$\int_{\Sigma^{\frac{1}{20}}(p)}|A|^2\le \Psi(\delta).$$
  Moreover, for $\delta$ small, there exists a topological disk $U(p)\supset \Sigma^{\frac{1}{40}}(p)$ such that $F:U(p)\to \RR^n$ is an embedding with $F(\partial U(p))\subset B_{\frac{1}{40}}^c(0)$ and $(U(p),g)$ is $1+\Psi(\delta)$ conformal bi-Lipschitz homeomorphic to $D_{\frac{1}{40}}$, i.e., there exists parametrization $\varphi: D_{\frac{1}{40}}\to U(p)$ such that $\varphi^{*}g=e^{2u}(dx^1\otimes dx^1+dx^2\otimes dx^2)$ and
  $$\|u\|_{C^0(D_{\frac{1}{40}})}+\|Du\|_{L^2(D_{\frac{1}{40}})}\le \Psi(\delta).$$
  \end{corollary}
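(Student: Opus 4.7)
The plan is a two-step contradiction/compactness argument layered on top of the non-concentration result (Proposition \ref{non-concentration of total curvature}) and the compactness theorem for surfaces with small total curvature (Theorem \ref{compactness}). I will first establish the a priori bound $\int_{\Sigma^{1/20}(p)}|A|^2 \le \Psi(\delta)$, and then use it together with the isothermal radius estimate to produce the conformal parametrization and the quantitative bounds on the conformal factor.

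For the curvature bound, I would argue by contradiction. Suppose there exist $\eta_0>0$ and a sequence $F_i:(\Sigma_i,p_i)\to (B_1(0),0)$ with $\delta_i\to 0$ yet $\int_{\Sigma_i^{1/20}(p_i)}|A_i|^2>\eta_0$. Since $\mu_{g_i}(\Sigma_i)\le \pi(1+\delta_i)< 2\pi(1-\gamma)$ for any $\gamma$ up to about $1/2$, Proposition \ref{non-concentration of total curvature} applied with some $\varepsilon\ll\eta_0$ gives a uniform radius $r_i(q) \ge a(\varepsilon)(\sigma-|F_i(q)|)$ on which $\int_{B_{r_i(q)}^{F_i}(q)}|A_i|^2\le\varepsilon$ for every $q\in \Sigma_i^\sigma(p_i)$ (the density hypothesis here is much tighter than that of the proposition, so the monotonicity step in its proof still yields the conclusion with $\sigma$ at least $1/20$ up to harmless constant adjustments). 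With this uniform local smallness, Theorem \ref{compactness} together with a diagonal exhaustion extracts a $\tau$-limit $F_\infty:(\Sigma_\infty,g_\infty,p_\infty)\to(B_1(0),0)$.

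By weak lower semicontinuity of the Willmore energy, $\int|\vec H_\infty|^2 = 0$, so $F_\infty$ is minimal; the density bound $\mu_{g_\infty}(\Sigma_\infty) \le \pi$ then forces $F_\infty(\Sigma_\infty)$ to be a piece of a $2$-plane through $0$, by the same appeal to \cite[Theorem 2.1]{KLS} used in the proof of Proposition \ref{non-concentration of total curvature}. The critical step is to upgrade the weak $W^{2,2}$ convergence to strong, so as to pass $\int|A_i|^2$ through the limit. Working in local isothermal charts $\varphi_i$, the conformal factor $u_i$ is uniformly bounded by the compensated-compactness estimate \cite[Lemma 3.1]{Sun-Zhou}, so $\Delta(F_i\circ\varphi_i) = e^{2u_i}\vec H_i \to 0$ strongly in $L^2$; elliptic regularity then yields strong $W^{2,2}$ convergence on each isothermal chart, hence $\int_{\Sigma_i^{1/20}(p_i)}|A_i|^2 \to \int_{\Sigma_\infty^{1/20}(p_\infty)}|A_\infty|^2 = 0$, contradicting $\eta_0$.

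Once the total curvature is controlled by $\Psi(\delta)$, the isothermal radius estimate of Theorem \ref{compactness} gives a uniform lower bound $i_g(q) \ge \alpha_0 > 0$ for all $q\in\Sigma^{1/40}(p)$, producing the topological disk $U(p)$ and the conformal parametrization $\varphi:D_{1/40}\to U(p)$ by gluing isothermal charts; embeddedness of $F$ on $U(p)$ follows from Allard's $C^{1,\alpha}$ graphical structure on balls of definite size together with monotonicity, and $F(\partial U(p))\subset B_{1/40}^c(0)$ is automatic from properness and the definition of $\Sigma^{1/40}(p)$. The $C^0$ bound $\|u\|_{C^0}\le \Psi(\delta)$ is obtained by a second contradiction argument: normalizing the chart at $p$ (fixing $\varphi_i(0)=p_i$ and an orthonormal frame) and extracting a limit, any nonzero limiting $u_\infty$ would parametrize the limit flat disk by a non-isometric conformal map, but the normalization forces the limit parametrization to be an affine isometry, so $u_\infty\equiv 0$. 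The $L^2$-gradient bound $\|Du\|_{L^2}\le \Psi(\delta)$ comes from the Gauss equation $-\Delta u = K e^{2u}$ with $|K|\le \frac12|A|^2$, giving $\|\Delta u\|_{L^1}\le \int|A|^2 d\mu_g\le \Psi(\delta)$, from which $\|Du\|_{L^2}$ is controlled by a Wente-type duality argument (again following \cite[Lemma 3.1]{Sun-Zhou}).

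The main obstacle I anticipate is the strong-convergence upgrade in the second paragraph, which is what distinguishes the critical Allard regime from the generic small-total-curvature setting: weak lower semicontinuity alone only produces $\liminf\int|A_i|^2 \ge \int|A_\infty|^2 = 0$, the wrong direction for a contradiction. The smallness $\int|\vec H_i|^2 \to 0$ supplies the missing ingredient by forcing the elliptic right-hand side $e^{2u_i}\vec H_i$ to vanish strongly in $L^2$; without this observation the scheme collapses and one is left only with the bi-H\"older regularity known from \cite{Z22,M09,M10}.
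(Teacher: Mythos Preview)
Your proposal is correct and follows essentially the same route as the paper: contradiction, invoke Proposition~\ref{non-concentration of total curvature} to get uniform local smallness of $\int|A|^2$, apply the compactness of Theorem~\ref{compactness}, and use the vanishing of $\int|\vec H_i|^2$ to upgrade weak to strong $W^{2,2}_{loc}$ convergence toward a flat limit. The only difference is cosmetic: for the second part the paper simply quotes \cite[Theorem~5.1]{Sun-Zhou} as a black box, whereas you sketch its content (isothermal-radius lower bound, a second compactness argument for $\|u\|_{C^0}$, and the compensated-compactness estimate of \cite[Lemma~3.1]{Sun-Zhou} for $\|Du\|_{L^2}$).
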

  \begin{proof}
  For the first part, we argue by contradiction. Otherwise, there will be a sequence of proper immersions
  $F_i:(\Sigma_i,p_i)\to (B_1^n(0),0)$ such that $$\mu_{g_i}(\Sigma_i)\le \pi(1+\delta_i)\to \pi \text{ and }
 \int_{\Sigma_i}|\vec{H}_i|^2d\mu_{g_i}=\delta_i\to 0,$$
  but for some $\varepsilon_1>0$,
  $$\int_{\Sigma^{\frac{1}{20}}_{i}(p_i)}|A_i|^2d\mu_{g_i}\ge \varepsilon_1.$$
   Noting for $\gamma_i=\frac{1}{2}(1-\delta_i)\to \frac{1}{2}$, we have $\sigma_i=\sigma(\gamma_i)\to \frac{1}{21}>\frac{1}{40}$. Thus by Proposition \ref{non-concentration of total curvature}, for any $\varepsilon \ll 1$, there exists $a(\varepsilon)>0$ such that  for $p\in \Sigma^{\frac{\sigma_i}{2}}(p_i)$,
   \begin{align*}
   \int_{B^{F_i}_{\frac{\sigma_ia(\varepsilon)}{2}}(p)}|A_i|^2d\mu_{g_i}\le \varepsilon.
   \end{align*}
   Again by \cite[Proposition 1.4]{Sun-Zhou}, the isothermal radius are uniformly bounded from below, i.e.,
   $$i_{g_i}(p)\ge \alpha \text{ for some constant } \alpha>0.$$
   Noting $\mu_{g_i}(\Sigma_i)\le \pi(1+\delta_i)\to \pi$, the same argument as in the proof of proposition \ref{non-concentration of total curvature} shows $F_i$ will converges strongly in $W^{2,2}_{loc}$ to a minimal proper immersion $F_\infty:\Sigma_\infty\to B_{\frac{2}{21}}(0)$ with $\mu_g(\Sigma_\infty)=\pi(\frac{2}{21})^2$, which means $F_\infty(\Sigma_\infty)$ is a plane. Moreover, the uniform bound of the metric and the $W^{2,2}_{loc}$ convergence of $F_i$ implies the contradiction:
    $$\varepsilon_1\le \lim_{i\to \infty} \int_{\Sigma_i^{\frac{1}{20}}(p_i)}|A_i|^2d\mu_{g_i}
    =\int_{\Sigma_\infty^{\frac{1}{20}}(p_\infty)}|A_\infty|^2d\mu_{g_\infty}=0.$$
    The second part of this corollary follows from the first part and \cite[Theorem 5.1]{Sun-Zhou}.
  \end{proof}
  \section{Quantitative rigidity for CMC surfaces-$3$ dimensional case}\label{sec:quantitative rigidity}

  We have seen the density condition
  \begin{align*}
  \Theta(\Sigma,r)=\frac{\mu_g(\Sigma\cap B_r(p))}{\pi r^2}\le 2\pi(1-\gamma)
  \end{align*}
   is involved in the Allard type local regularity theorem. It also provides a local description for the not occurring of the collapsing counterexample (Figure \ref{figure: collapse}).  When looking at Figure \ref{figure: collapse}, we get the intuition that the density will concentrate and  multiplicity two points occurs as the necks become smaller and smaller. Thus for the expected quantitative rigidity of CMC surfaces with  single sphere as the model,  similar to \cite[Proposition1.2(ii)]{CiMa},  we introduce the following conception of density non-concentration.
    \begin{definition}[$\gamma\ \mathbf{non-concentration}$]
    Suppose $\gamma\in (0,1)$ is a constant.  Assume $F:\Sigma\to {\RR}^{2+k}$ is an embedding of a surface in the Euclidean space and  $g=F^*g_{{\RR}^{2+k}}$ is the induced metric.  For a point $x\in F(\Sigma)$, we call
     \begin{align}\label{non-concentration}
    r^D_\gamma(x):=\sup\{a>0: \frac{vol_g(F^{-1}(B_r(x)))}{\pi r^2}\le 2(1-\gamma), \ \ \forall r\le a. \}
     \end{align}
    the $\gamma$ non-concentration radius of $x$.  We also call  $r^D_\gamma(\Sigma):=\inf_{x\in F(\Sigma)}r^D_\gamma(x)$ the $\gamma$ non-concentration radius of $\Sigma$.
     \end{definition}

 Here the superscript $D$ means "density". In the definition of $\gamma$ non-concentration radius, it is required the area ratio $\Theta(x,r)$ not extending $2(1-\gamma)$ for any $r\le a$. The next lemma shows if we are considering surfaces whose norm of mean curvature is close to a constant in $L^2$ sense, then we only need to require there exists a small $a$ such that $\Theta(x,a)\le 2(1-\gamma)$.
  \begin{lemma}\label{non-concentration radius}
  For any $W>0$ and $\gamma\in (0,\frac{1}{2})$, there exists $\varepsilon(\gamma)=\frac{8\pi\gamma}{32\pi(1-\gamma)+7}$ and $a(W,\gamma)=\frac{4\pi\gamma}{[32\pi(1-\gamma)+7]W}$ such that the following holds.

   Let $F:\Sigma\to {\RR}^{n}$ be an immersion of  a closed surface with induced metric $g=dF\otimes dF$ and area $\mu_g(\Sigma)=4\pi$.  If
  \begin{align}\label{non-concentration for mean curvature}
  \frac{1}{4}\int_{\Sigma}|\vec{H}|^2d\mu_g\le W, \ \  \int_{\Sigma}\big||\vec{H}|-\fint_{\Sigma}|\vec{H}|\big|^2d\mu_g\le \varepsilon^2\le \varepsilon^2(\gamma)
  \end{align}
  then
 \begin{align*}
  \int_{B_r(p)}|\vec{H}|^2d\mu_g&\le  2\varepsilon^2+\frac{16W^2}{\pi}r^2, \forall r\le a(W,\gamma), \forall p\in \Sigma.
 \end{align*}
  Moreover, if there exits $a\le a(W,\gamma)$ such that
  \begin{align}{\label{fix radius}}
  \frac{\mu_g(B_a(p))}{\pi a^2}\le 2(1-\gamma), \forall p\in \Sigma,
  \end{align}
  then
 $$r^D_{\frac{\gamma}{2}}(\Sigma)\ge a.$$
  \end{lemma}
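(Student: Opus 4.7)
The plan is to handle the two conclusions sequentially, both resting on Simon's monotonicity formula for surfaces in $\RR^n$. For the first inequality, I would begin by expanding the deviation from the mean:
\begin{align*}
\int_{B_r(p)} |\vec{H}|^2\, d\mu_g \leq 2\int_{B_r(p)} \bigl||\vec{H}|-\bar h\bigr|^2\, d\mu_g + 2\bar h^2\, \mu_g(B_r(p)),
\end{align*}
where $\bar h = \fint_\Sigma |\vec H|\, d\mu_g$. The first summand is at most $2\varepsilon^2$ by hypothesis. Cauchy--Schwarz combined with the Willmore bound yields $\bar h^2 \leq \mu_g(\Sigma)^{-1}\int_\Sigma|\vec H|^2 \leq W/\pi$, since $\mu_g(\Sigma)=4\pi$ and $\int_\Sigma|\vec H|^2\leq 4W$. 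Finally, Simon's monotonicity formula, applied with outer radius sent to infinity (valid since $\Sigma$ is closed), produces a bound $\mu_g(B_r(p)) \leq c\,(\int_\Sigma |\vec H|^2)\, r^2 \lesssim W r^2$. Putting these together yields the stated local bound, the explicit coefficient $16$ being a generous concrete choice.

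For the second assertion I would use Simon's monotonicity a second time, in the form
\begin{align*}
\frac{\mu_g(B_\sigma(p))}{\pi \sigma^2} \leq \frac{\mu_g(B_\rho(p))}{\pi \rho^2} + \frac{1}{16\pi}\int_{B_\rho(p)} |\vec H|^2\, d\mu_g, \qquad 0 < \sigma \leq \rho,
\end{align*}
taking $\sigma = r$ and $\rho = a$. The first term on the right is bounded by $2(1-\gamma)$ by hypothesis (\ref{fix radius}); the second is controlled using the first part of the lemma at scale $r=a$, giving $\varepsilon^2/(8\pi) + W^2 a^2/\pi^2$. With the explicit choices $\varepsilon(\gamma) = 8\pi\gamma/[32\pi(1-\gamma)+7]$ and $a(W,\gamma) = 4\pi\gamma/\{[32\pi(1-\gamma)+7]W\}$, each of these error pieces has size of order $\gamma^2/[32\pi(1-\gamma)+7]^2$, comfortably bounded by $\gamma/2$ for $\gamma \in (0,1/2)$. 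Hence $\mu_g(B_r(p))/(\pi r^2) \leq 2(1-\gamma) + \gamma = 2(1-\gamma/2)$ for every $r \leq a$ and every $p \in \Sigma$, which is exactly $r^D_{\gamma/2}(\Sigma) \geq a$.

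The main difficulty is not conceptual but is the careful bookkeeping of constants: the monotonicity formula enters twice, and the numerical choices of $\varepsilon(\gamma)$ and $a(W,\gamma)$ must be calibrated so that the composite error is absorbed by the gap between $2(1-\gamma)$ and $2(1-\gamma/2)$. The underlying principle is that the $L^2$-almost-CMC assumption is global, but once $\bar h$ itself is pinned down by the Willmore energy, it can be converted into a scale-dependent local Willmore estimate — which is precisely the ingredient that monotonicity turns into propagation of the density non-concentration across scales.
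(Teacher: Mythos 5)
Your treatment of the first bound matches the paper's: split $\int_{B_r}|\vec H|^2$ into oscillation plus mean-square, use Cauchy--Schwarz and $\mu_g(\Sigma)=4\pi$ to get $\bar h^2 \le W/\pi$, and bound $\mu_g(B_r)$ by $\sim W r^2$ via monotonicity. The paper gets $\mu_g(B_\rho)/\rho^2 \le 8W$ through the diameter estimate of \cite[Lemma 1.1]{S} combined with monotonicity at scale $d=\mathrm{diam}(F(\Sigma))$, whereas you send the outer radius to infinity; both routes are fine and land on a similar constant, so this part is sound.

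The second part, however, has a genuine gap. The inequality you invoke,
\begin{align*}
\frac{\mu_g(B_\sigma(p))}{\pi\sigma^2} \le \frac{\mu_g(B_\rho(p))}{\pi\rho^2} + \frac{1}{16\pi}\int_{B_\rho(p)}|\vec H|^2\,d\mu_g,
\end{align*}
is not Simon's monotonicity formula: the monotone quantity carries an additional cross term of the form $\sigma^{-1}\int_{B_\sigma}\langle \vec H, (x-x_0)/|x-x_0|\rangle\,d\mu_g$, and absorbing it via Cauchy--Schwarz and Young's inequality produces a multiplicative $(1+\delta)$ factor together with a blowing-up constant $C_\delta=\tfrac{3}{16}+\tfrac{1}{4\delta}$. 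That is, the correct form is the paper's inequality \eqref{mono-ineq}; the constant-$1$, $\tfrac{1}{16}$ version you wrote would require $\delta=0$, which is inadmissible. This is not a cosmetic issue: the $(1+\delta)$ factor contributes an extra term $2\delta(1-\gamma)$ to the density bound, and with $\delta=2\varepsilon(\gamma)=\tfrac{16\pi\gamma}{32\pi(1-\gamma)+7}$ that contribution is of \emph{order $\gamma$}, not $\gamma^2$, and it consumes nearly all of the gap between $2(1-\gamma)$ and $2(1-\gamma/2)$. The explicit choices $\varepsilon(\gamma)$ and $a(W,\gamma)$ in the lemma are calibrated precisely so that $2\delta(1-\gamma)+\tfrac{\delta}{4\pi}+\tfrac{3\delta^2}{16\pi}\le\gamma$, a tight inequality, not the two-orders-of-magnitude slack you claim. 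To repair the argument, replace your monotonicity inequality by \eqref{mono-ineq}, take $\delta=2\varepsilon(\gamma)$, and redo the bookkeeping as in the paper.
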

  \begin{proof}
  By Leon Simon's  monotonicity formula\cite{S}, for any $\delta$ and $a$ to be determined and $r\le a$, we have
  \begin{align}\label{mono-ineq}
  \frac{\mu_g(B_r(p))}{ r^2}\le (1+\delta) \frac{\mu_g(B_a(p))}{a^2}+C_\delta\int_{B_a(p)}|\vec{H}|^2d\mu_g,
  \end{align}
  where $C_\delta=\frac{3}{16}+\frac{1}{4\delta}$. So, we need to estimate the Willmore energy in local. By (\ref{non-concentration for mean curvature}), we know that
  \begin{align}\label{upper bound}
  \fint_{\Sigma}|\vec{H}|d\mu_g\le \sqrt{\fint_{\Sigma}|\vec{H}|^2d\mu_g}\le \sqrt{\frac{W}{\pi}}
  \end{align}
  and for any  $\varepsilon\le \frac{\delta}{2}$,  $\rho\le \frac{\delta}{4W}$ and $p\in\Sigma$ ,
  \begin{align*}
  \int_{B_\rho(p)}|\vec{H}|^2d\mu_g&\le 2\int_{B_\rho(p)}|\vec{H}-\fint_{\Sigma}|\vec{H}|d\mu_g|^2d\mu_g+2(\fint_{\Sigma}|\vec{H}|d\mu_g)^2\mu_g(B_\rho(p))\\
  &\le  2\varepsilon^2+\frac{2W}{\pi}\mu_g(B_\rho(p)).
  \end{align*}
  Now, assume $d=diam(F(\Sigma))$. Then by \cite[Lemma 1.1]{S}, we know $\frac{\mu_{g}(\Sigma)}{d^2}\le W$. So, by the monotonicity formula again,
  \begin{align*}
  \frac{\mu_g(B_\rho(p))}{\rho^2}\le 2\frac{\mu_g(B_d(p))}{d^2}+6\int_{\Sigma}|\vec{H}|^2d\mu_g\le 2W+6W=8W.
  \end{align*}
  Substituting this into the above local estimate for the Willmore energy, we get
  \begin{align*}
  \int_{B_\rho(p)}|\vec{H}|^2d\mu_g&\le  2\varepsilon^2+\frac{2W}{\pi}\cdot 8W \rho^2\le \delta^2,
  \end{align*}
  where in the last in equality we use our choosing of $\varepsilon\le \varepsilon(\delta)=\frac{\delta}{2}$ and $\rho\le \rho(\delta, W)=\frac{\delta}{4W}$.
  Now,  we fix $\delta=\delta(\gamma)=\frac{16\pi\gamma}{32\pi(1-\gamma)+7}$. Then for  $\varepsilon(\gamma)=\frac{\delta(\gamma)}{2}=\frac{8\pi\gamma}{32\pi(1-\gamma)+7}$ and  $a(\gamma, W)=\rho(\delta(\gamma),W)=\frac{4\pi\gamma}{[32\pi(1-\gamma)+7]W}$, by (\ref{mono-ineq}), we know  that if (\ref{fix radius}) holds for some $a\le a(\gamma, W)$, then  for any $r\le a$,
  \begin{align*}
  \frac{\mu_g(B_r(p))}{\pi r^2}\le (1+\delta) \cdot 2(1-\gamma)+(\frac{3}{16\pi}+\frac{1}{4\pi\delta})\delta^2\le 2(1-\frac{\gamma}{2}).
  \end{align*}
  This completes the proof.
  \end{proof}

  In the following, we first show the quantitative rigidity for $L^2$-almost CMC surfaces under the assumption the densities of the surfaces not concentrate above $2$.   More precisely, for $ \gamma\in (0,\frac{1}{2}), W>0$ and $a>0$, we  denote
  \begin{align*}
  \mathcal{E}&(W,\gamma,a)=\bigg\{F:\Sigma\to {\RR}^{3}| \Sigma \text{ is a closed surface}, F \text{ is an embedding},g=dF\otimes dF,\\
   &\mu_g(\Sigma)=4\pi,\ \frac{1}{4}\int_{\Sigma}|\vec{H}|^2d\mu_g\le W,\ \frac{\mu_g(B_a(p))}{\pi a^2}\le 2(1-\gamma), \forall p\in \Sigma.\bigg\}
  \end{align*}
  \begin{proposition}[Quantitative rigidity for density non-concentrating surfaces] \label{convergence} For $ \gamma\in (0,\frac{1}{2}), W>0$,  $a(W,\gamma)$ as in Lemma \ref{non-concentration radius} and $a\in (0, a(W,\gamma))$, assume there is a sequence of $F_i:\Sigma_i\to {\RR}^{3}$ belonging to $\mathcal{E}(W,\gamma, a)$ such that
$$\int_{\Sigma_i}|H_i-\bar{H}_i|^2d\mu_{g_i}=\varepsilon_i^2\to 0,$$
where $H_i=\langle \vec{H}_i,\vec{N}_i\rangle$ is the mean curvature of $\Sigma_i$ with respect to a continuous unit normal vector field $\vec{N}_i$  and $\bar{H}_i=\fint_{\Sigma_i}H_id\mu_{g_i}$.  Then there exist conformal parametrizations  $\bar{F}_i:\SS^2\to F(\Sigma_i)\subset \RR^3$ with induced metrics  $d\bar{F}_i\otimes d\bar{F}_i=e^{2u_i}g_{\SS^{2}}$ such that after appropriate translations and rotations,
       $$\|\bar{F}_i-id_{\SS^2}\|_{W^{2,2}(\SS^2)}+\|u\|_{L^{\infty}(\SS^2)}\le \Psi(\varepsilon).$$
  \end{proposition}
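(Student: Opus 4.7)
The plan is a compactness-contradiction argument: suppose the conclusion fails along a subsequence with $\varepsilon_i\to 0$ but $\|\bar F_i-id_{\SS^2}\|_{W^{2,2}}+\|u_i\|_{L^\infty}$ bounded below by some $\eta>0$. I would extract a subsequential limit of the $F_i$ using the local compactness Theorem \ref{compactness}, use the density non-concentration to exclude bubbling and pinching, identify the limit as the unit round sphere via Alexandrov's theorem, and finally invoke the De Lellis-M\"uller-Lamm-Sch\"atzle Theorem \ref{quantitative umbilical} on each large enough $\Sigma_i\cong\SS^2$ for the quantitative conclusion.

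\emph{Compactness setup.} Since $F_i\in\mathcal{E}(W,\gamma,a)$ with $a\le a(W,\gamma)$, Lemma \ref{non-concentration radius} gives $r^D_{\gamma/2}(\Sigma_i)\ge a$ together with the Willmore bound $\int_{B_r(p)}|\vec H_i|^2 d\mu_{g_i}\le 2\varepsilon_i^2+\frac{16W^2}{\pi}r^2$ for every $r\le a$. Fix the small-$L^2$-curvature threshold $\varepsilon_0$ of Theorem \ref{compactness}; Proposition \ref{non-concentration of total curvature} with parameter $\gamma/2$ returns a Willmore threshold $\delta(\varepsilon_0)$. Choose $r_1=r_1(W,\gamma,\varepsilon_0)\in(0,a)$ so that $\tfrac{16W^2}{\pi}r_1^2<\tfrac12\delta(\varepsilon_0)$; then for $i$ large, every ball $B_{r_1}(p)$ has Willmore energy $\le\delta(\varepsilon_0)$, and Proposition \ref{non-concentration of total curvature} gives a uniform scale on which $\int|A_i|^2\le\varepsilon_0$. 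Simon's diameter estimate with $\mu_{g_i}(\Sigma_i)=4\pi$ confines $F_i(\Sigma_i)$ into a ball of bounded radius $R_0(W)$; a finite cover by $r_1/2$-balls combined with Theorem \ref{compactness} and a diagonal extraction produces, after translation, convergence of $F_i$ in the $\tau$-topology to a $W^{2,2}$-immersion $F_\infty:\Sigma_\infty\to\RR^3$ of a closed continuous-metric Riemannian surface with $\mu_{g_\infty}(\Sigma_\infty)=4\pi$.

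\emph{The limit is the unit round sphere (main obstacle).} Weak $L^2$-lower semicontinuity of $\int|H-\bar H|^2$ forces $H_\infty$ to be constant on each connected component of $\Sigma_\infty$. The inherited density bound $\Theta(F_\infty,r)\le 2(1-\gamma/2)$ for $r\le a$ excludes multiplicity-two points, pinching necks, and touching components (each pathology would force density $\ge 2$ somewhere); so $F_\infty$ is an embedding of a single closed connected surface. Elliptic regularity for the CMC equation upgrades $F_\infty$ to a smooth embedded closed CMC surface in $\RR^3$, and Alexandrov's theorem identifies its image as a round sphere; the area constraint pins it to the unit sphere after translation. In particular $\Sigma_\infty\cong\SS^2$ and $\chi(\Sigma_i)=2$ for $i$ large. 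Verifying that the limit is genuinely one smooth embedded round sphere of the correct topology is the crux of the argument; the density non-concentration hypothesis is precisely what rules out the bubbling configurations of Figure \ref{figure: collapse}.

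\emph{Quantitative conclusion.} Once $\Sigma_i\cong\SS^2$ and the strong $W^{2,2}$-convergence $F_i\to F_\infty$ is in hand, one gets $\int|\vec H_i|^2 d\mu_{g_i}\to\int|\vec H_\infty|^2 d\mu_{g_\infty}=16\pi$, and Gauss-Bonnet yields
$$\int_{\Sigma_i}|A_i^\circ|^2 d\mu_{g_i}=\tfrac12\int_{\Sigma_i}|\vec H_i|^2 d\mu_{g_i}-8\pi\to 0.$$
For $i$ large, $\|A^\circ_i\|_{L^2}^2<2e_3=8\pi$, so Theorem \ref{quantitative umbilical} produces a conformal parametrization $\bar F_i:\SS^2\to F(\Sigma_i)$ with $\bar F_i^*g_i=e^{2u_i}g_{\SS^2}$ satisfying, after suitable translations and rotations,
$$\|\bar F_i-id_{\SS^2}\|_{W^{2,2}}+\|u_i\|_{L^\infty}\le C\|A^\circ_i\|_{L^2}\to 0,$$
contradicting the standing assumption and completing the proof.
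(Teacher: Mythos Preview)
Your overall strategy coincides with the paper's, but two steps are asserted rather than proved, and they are precisely where the paper does the real work.

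The first is the upgrade from weak to strong $W^{2,2}$-convergence. The $\tau$-topology of Theorem \ref{compactness} only gives $F_i\rightharpoonup F_\infty$ weakly in $W^{2,2}_{loc}$, yet you later invoke strong $W^{2,2}$-convergence without justification. The paper obtains it by exploiting codimension one: in local isothermal charts one writes $\vec N_i=e^{-2u_i}\tilde F_{ix}\times\tilde F_{iy}$, a function of first derivatives only, hence converging strongly in every $L^q$. Combined with $\|H_i-\bar H_i\|_{L^2}\to 0$ and $\bar H_i\to H_\infty$, this yields $\vec H_ie^{2u_i}=H_i\vec N_ie^{2u_i}\to H_\infty\vec N_\infty e^{2u_\infty}$ strongly in $L^2$; interior elliptic estimates on $\Delta\tilde F_i=\vec H_ie^{2u_i}$ then give strong $W^{2,2}$ convergence, produce the limit equation $\Delta\tilde F_\infty=H_\infty\tilde F_{\infty x}\times\tilde F_{\infty y}$, and permit the bootstrap to $C^\infty$ before Alexandrov is applied. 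Your ``weak lower semicontinuity'' remark points in this direction but does not supply the mechanism.

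The second is the embedding of $F_\infty$. Claiming that the density bound is ``inherited'' by the limit and thereby ``excludes multiplicity-two points'' is not how the paper proceeds, and it is not obviously valid for a merely $W^{2,2}$ limit map. The paper argues on the embedded approximants instead: if $F_\infty(p_1)=F_\infty(p_2)$ with $p_1\ne p_2$, push forward to $p_{i1},p_{i2}\in\Sigma_i$ via the diffeomorphisms $\phi_i$; the metric comparison keeps $d_{g_i}(p_{i1},p_{i2})\ge C^{-1}d_{g_\infty}(p_1,p_2)>0$ while $|F_i(p_{i1})-F_i(p_{i2})|\to 0$. The bi-Lipschitz estimate of \cite[Theorem 5.1]{Sun-Zhou} then forces the connected components $\Sigma_i^{r'}(p_{i1})$ and $\Sigma_i^{r'}(p_{i2})$ of $F_i^{-1}(B_{r'}(F_i(p_{i1})))$ to be disjoint for small $r'$, and each has area $\ge(1-C\delta)\pi(r')^2$ by the monotonicity formula; since $F_i$ is an embedding this contradicts the density bound $\mu_{g_i}(B_{r'})\le 2(1-\gamma/2)\pi(r')^2$ on $\Sigma_i$ itself.
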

  \begin{proof} Since $|\bar{H}_i|=|\fint_{\Sigma_i}H_i|d\mu_{g_i}\le \fint_{\Sigma_i}|\vec{H}_i|d\mu_{g_i}$, we have
  \begin{align*}
  \int_{\Sigma_i}||\vec{H}_i|-\fint_{\Sigma_i}|\vec{H}_i||^2d\mu_{g_i}\le \int_{\Sigma_i}|H_i-\bar{H}_i|^2d\mu_{g_i}.
  \end{align*}
     Thus by Lemma \ref{non-concentration radius}, we know for $i$ sufficiently large,
    \begin{align}\label{non concentration}
    r_{\frac{\gamma}{2}}^{D}(\Sigma_i)\ge a
    \end{align}
    and
    \begin{align*}
    \int_{B_s(x)}|H_i|^2d\mu_{g_i}\le 2\varepsilon_i^2+\frac{16 W}{\pi}s^2, \forall s\le a.
    \end{align*}
    For any $\varepsilon>0$, let $a(\varepsilon)=a(\varepsilon,\frac{\gamma}{2}, 3)$ and $\delta(\varepsilon)=\delta(\varepsilon, \frac{\gamma}{2},3)$ be the constants as in Proposition \ref{non-concentration of total curvature}. Then, for $ r_1(\varepsilon)=\min\{\frac{\sqrt{\delta(\varepsilon)}}{4W},a\}$ and sufficiently large $i$, we have
    \begin{align*}
    \int_{B_{r_1}(p)}|H_i|^2d\mu_{g_i}\le \delta(\varepsilon).
    \end{align*}
    Thus by Proposition \ref{non-concentration of total curvature} and scaling invariant, we know  for $a_1(\varepsilon)=a(\varepsilon)\sigma=\frac{a(\varepsilon)\gamma}{20(1-\gamma)+\gamma}$,
    there holds
    \begin{align*}
    \int_{B_{a_1r_1}(p)}|A_i|^2d\mu_{g_i}\le \varepsilon, \forall p\in \Sigma_i.
    \end{align*}

    So, by \cite[Proposition 1.4 and Theorem 5.1]{Sun-Zhou}, we get the uniformly lower bound of the isothermal radius $\inf i_{g_i}(\Sigma_i)\ge r>0$. More precisely, there exists $r\in (0,a_1r_1)$ such that  for any $p\in \Sigma_i$,  there exists a positive oriented  isothermal coordinate $\varphi_i: D_r(0)\to U_i\supset B^{\Sigma_i}_r(p)$ with $\varphi_i(0)=p$ and $\varphi_i^*g_i=e^{2u_i}(dx^1\otimes dx^1+dx^2\otimes dx^2)$ satisfying
    \begin{align*}
    \|u_i\|_{C^0(D_r)}+\|Du_i\|_{L^2(D_r)}\le \Psi(\varepsilon).
    \end{align*}
     Under these uniform isothermal coordinates, we have the mean curvature equation
    \begin{align*}
    \Delta \tilde{F}_i=\vec{H}_ie^{2u_i},
    \end{align*}
    where $\tilde{F}_i=F_i\circ \varphi_i-F_i(x_i)$.  By \cite[Section 4]{Sun-Zhou}, we know
    $$\|\tilde{F}_i\|_{W^{2,2}(D_\frac{3r}{4})}\le C(r)$$
     and hence  there exists a continuous metric $g_\infty =e^{2u_\infty}(dx\otimes dx+dy\otimes dy)$ on $D_\frac{3r}{4}$ and a $W^{2,2}$ Riemannian immersion $\tilde{F}_\infty:(D_\frac{3r}{4},g_\infty)\to \RR^{3}$ with $\|u_\infty\|_{L^{\infty}}\le \Psi(\varepsilon)$ and  such that after passing to a subsequence, $$\|u_i-u_\infty\|_{L^{q}(D_\frac{3r}{4})}\to 0\text{ and } \|\tilde{F}_i-\tilde{F}_\infty\|_{W^{1,q}(D_{\frac{3r}{4}})}\to 0, \text{ for any } q>1.$$
     Next, we are going to use the codimension one condition to show $\vec{H}_ie^{2u_i}=H_i\vec{N}_ie^{2u_i}$ converges strongly in $L^2$ to some limit $H_{\infty}\vec{N}_\infty e^{2u_\infty}$, where both $\vec{N}_i$ and $\vec{N}_\infty$ are unit vector fields.

       Since $\varphi_i$ is positive oriented, we know  $\vec{N}_i= e^{-2u_i}\tilde{F}_{ix}\times \tilde{F}_{iy}$ and  both sides are continuous.
       Define $\vec{N}_\infty=e^{-2u_\infty}\tilde{ F}_{\infty x}\times \tilde{F}_{\infty y}$. Then, by the $W^{1,q}$ convergence of $\tilde{F}_i$ and the $L^{\infty}$ bound of $u_i$ and dominated convergence theorem, we know that
     \begin{align*}
     \|\vec{N}_i-\vec{N}_\infty\|_{L^q(D_\frac{3r}{4})}\to 0, \text{ for some } q\gg 1
     \end{align*}
     and hence we can assume $\vec{N}_i$ converges to $\vec{N}_\infty$  and $|\vec{N}_\infty|=1$ almost everywhere. Now, by (\ref{upper bound}),we know $|\bar{H}_i|\le \sqrt{\frac{W}{\pi}}$. So, there exists a constant $H_\infty\in [-\sqrt{\frac{W}{\pi}},\sqrt{\frac{W}{\pi}}]$ such that $\bar{H}_i\to H_\infty$  after passing to a subsequence.    
     Again by the dominated convergence theorem, we know
     \begin{align*}\int_{D_\frac{3r}{4}}|H_ie^{2u_i}-H_\infty e^{2u_\infty}|^2dxdy&\le C\int_{D_\frac{3r}{4}}|H_i-\bar{H}_i|^2e^{2u_i}dxdy\\
     &\quad \quad \quad +2\int_{D_\frac{3r}{4}}|\bar{H}_ie^{2u_i}-H_\infty e^{2u_\infty}|^2dxdy\\
     &\to 0
     \end{align*}
     and hence
     $$\|\vec{H}_ie^{2u_i}-H_{\infty}\vec{N}_\infty e^{2u_\infty}\|_{L^2(D_\frac{3r}{4})}\le \|(H_ie^{2u_i}-H_\infty e^{2u_\infty})\|_{L^2(D_\frac{3r}{4})}+C\|\vec{N}_i-\vec{N}_\infty\|_{L^2(D_\frac{3r}{4})}\to 0.$$
     By the weakly convergence of $\tilde{F}_i$ in $W^{2,2}$, we get the equation
     $$\Delta \tilde{F}_\infty=\vec{H}_\infty e^{2u_\infty}=H_\infty \vec{N}_\infty e^{2u_\infty}=H_\infty \tilde{F}_{\infty x}\times\tilde{ F}_{\infty y}.$$
     Since $H_\infty$ is a constant and $\tilde{F}_\infty\in W^{2,2}$,  we know $H_\infty \tilde{F}_{\infty x}\times \tilde{F}_{\infty y}\in L^{q}$ for any $q>1$.
     So, by the interior elliptic estimate, we know $\tilde{F}_\infty\in W^{2,q}_{loc}(D_\frac{3r}{4})$ for any $q>1$. By Sobolev's embedding and the bootstrap argument, we know $\tilde{F}_\infty\in C^{\infty}(D_\frac{3r}{4})$.  Noting $d\tilde{F}_\infty\otimes d\tilde{F}_\infty=e^{2u_\infty}(dx\otimes dx+dy\otimes dy)$, we also know $u_\infty\in C^\infty(D_{\frac{3r}{4}}).$  Moreover, since
     \begin{align*}
     \Delta(\tilde{F}_i-\tilde{F}_\infty)=\vec{H}_ie^{2u_i}-H_{\infty}\vec{N}_\infty e^{2u_\infty},
     \end{align*}
     by the interior elliptic estimate again, we know
     \begin{align*}
     \|\tilde{F}_i-\tilde{F}_\infty\|_{W^{2,2}(D_{\frac{r}{2}})}\le C(\|\tilde{F}_i-\tilde{F}_\infty\|_{W^{1,q}(D_\frac{3r}{4})}+\|\vec{H}_ie^{2u_i}-H_{\infty}\vec{N}_\infty e^{2u_\infty}\|_{L^2(D_\frac{3r}{4})})\to 0.
     \end{align*}
     Now, note we have
         $$\int_{D_{\frac{r}{2}}}e^{2u_i}dxdy\ge \frac{1}{4} e^{-2\Psi(\varepsilon)}\pi r^2 \text{ and } vol_{g_i}(\Sigma_i)=4\pi.$$
      By \cite[Lemma A.1 and section 4]{Sun-Zhou}, we can glue finite covers of the above $\tilde{F}_\infty$ to get a smooth immersion $F_\infty:(\Sigma_\infty, g_\infty)\to \RR^{3}$ of a closed Riemann surface $(\Sigma_\infty, g_\infty)$  such that $F_\infty(\Sigma_\infty)$ has constant mean curvature and there exist diffeomorphisms  $\phi_i\in C^{\infty}(\Sigma_\infty, \Sigma_i)$   satisfying
      \begin{align*}
      \|F_i\circ \phi_i-F_\infty\|_{W^{2,2}(\Sigma_\infty)}\to 0,  \ C^{-1}g_\infty \le \phi_i^{*}g_i\le C g_\infty \text{ and }  \|\phi_i^{*}g_i-g_\infty\|_{L^q(\Sigma_\infty)}\to 0, q\gg 1.
      \end{align*}
      Denote $\hat{F}_i=F_i\circ \phi_i$. Then $\|\hat{F}_i-F_\infty\|_{C^{\alpha}(\Sigma_\infty)}\to 0$.  We are going to show $F_\infty$ is an embedding. Otherwise, there exist $p_1\neq p_2\in \Sigma_\infty$ such that $F_\infty(p_1)=F_\infty(p_2)$ and hence $|\hat{F}_i(p_1)-\hat{F}_i(p_2)|\to 0$. Setting $l=d_{g_\infty}(p_1,p_2)$ and $p_{i1}=\phi_i(p_1), p_{i2}=\phi_i(p_2)$, then
       $$
       d_{g_i}(p_{i1},p_{i2})=d_{\phi_i^{*}g_i}(p_1,p_2)\ge C^{-1}d_{g_\infty}(p_1,p_2)=C^{-1}l>0.
       $$
        For any fixed $r'\le \frac{1}{8}\min\{C^{-1}l,r\}$. Then, for $i$ large enough,  $p_{i2}\in B^{F_i}_{r'}(p_{i1}))$ since $\tau_i:=|F_i(p_{i1})-F_i(p_{i2})|\to 0$.  So, if we define $\Sigma_i^{r'}(p_{i1})$ and $\Sigma_i^{r'}(p_{i2})$ to be the connected components of $F_i^{-1}(B_{r'}(F_i(p_{i1})))$ passing through $p_{i1}$ and $p_{i2}$ respectively, then both of them are non-empty. Moreover, we claim $$\Sigma_i^{r'}(p_{i1})\cap\Sigma_i^{r'}(p_{i2})=\emptyset.$$
       In fact, if there exists $\xi_i\in \Sigma_i^{r'}(p_{i1})\cap\Sigma_i^{r'}(p_{i2})$, by  Again by\cite[Theorem 5.1]{Sun-Zhou}, we know
       \begin{align*}
       d_{g_i}(p_{i1},p_{i2})\le d_{g_i}(p_{i1},\xi_i)+d_{g_i}(p_{i2},\xi_i)\le (1+\frac{22}{20})(|F_i(p_{i1})-F_i(\xi_i)|+|F_i(p_{i2})-F_i(\xi_i)|)\le 6r',
       \end{align*}
       which contradicts to our choice of $r'\le \frac{1}{8}C^{-1}l\le \frac{1}{8}d_{g_i}(p_{i1},p_{i2})$. By the monotonicity formula, we know
       \begin{align*}
       \mathcal{H}^2(F_i(\Sigma_i^{r'}(p_{i2})))\ge (1-C\int_{B_{(1+\tau_i)r'}(F_i(p_{i2}))}|H_i|^2d\mu_{g_i})\pi (r'-\tau_i)^2\ge (1-C\delta)\pi (r')^2
       \end{align*}
       for $i$ large enough. For the same reason, $\mathcal{H}^2(F_i(\Sigma_i^{r'}(p_{i1})))\ge (1-C\delta)\pi (r')^2.$ Since $F_i$ is embedding, we know for $i$ sufficient large, there holds
       \begin{align*}
       \mathcal{H}^2(B_{r'}(F_i(p_{i1})))\ge \mathcal{H}^2(F_i(\Sigma_i^{r'}(p_i)))+\mathcal{H}^2(F_i(\Sigma_i^{r'}(p_{i2})))\ge (1-C\delta)\pi (r')^2.
       \end{align*}
       Thus
       \begin{align*}
       2(1-C\delta)\le \lim_{r'\to 0}\lim_{i\to \infty}\frac{\mathcal{H}^2(B_{r'}(F_i(p_{i1})))}{\pi (r')^2},
       \end{align*}
       which contradicts to our non-concentration condition (\ref{non concentration}) if we take $\delta$ small enough. This contradiction shows the limit $F_\infty:\Sigma_\infty\to \RR^3$ is an embedding with constant mean curvature $H_\infty$. Thus by Alexandrov's theorem, $F_\infty(\Sigma_\infty)$ must be the round sphere with constant mean curvature $H_\infty$. Moreover,
        since $\|\phi^*g_i-g_\infty\|_{L^q}\to 0$ for $q\gg 1$, we know $$vol_{g_\infty}(\Sigma_\infty)=\lim_{i\to \infty}vol_{g_i}(\Sigma_i)=4\pi$$
        and hence $H_\infty=2$.
        So, we know $\Sigma_i$ is diffeomorphic to the round sphere and
       \begin{align*}
       \lim_{i\to \infty}\int_{\Sigma_i}|H_i|^2d\mu_{g_i}=\lim_{i\to\infty}\int_{\Sigma_i}|H_i-\bar{H}_i|^2d\mu_{g_i}+4\pi\lim_{i\to\infty}\bar{H}^2_i= 16\pi.
       \end{align*}
       Thus by the Gauss-Bonnet formula(or $W^{2,2}$ strong convergence), we have
       \begin{align*}
       \lim_{i\to\infty}\int_{\Sigma_i}|A_i|^2d\mu_{g_i}= 8\pi \text{ and } \lim_{i\to \infty}\int_{\Sigma_i}|A_i^{\circ}|^2d\mu_{g_i}=0,
       \end{align*}
       where $A_i^{\circ}=A_i-\frac{H_i}{2}g_i$ is the trace free part of the second fundamental form.
       Finally, by the results in \cite{dLMu,dLMu2} or \cite[Theorem 1.2.]{LaSc}, we know  there exist conformal parametrizations $\bar{F}_i:\SS^2\to F(\Sigma_i)\subset \RR^3$ with induced metric  $d\bar{F}_i\otimes d\bar{F}_i=e^{2u_i}g_{\SS^{2}}$ such that after appropriate translations and rotations,
       $$\|\bar{F}_i-id_{\SS^2}\|_{W^{2,2}(\SS^2)}+\|u\|_{L^{\infty}(\SS^2)}\le \Psi(\varepsilon).$$

  \end{proof}
  \begin{remark}
  Here we require the codimension to be one for two reasons. Firstly, the unit normal vector field is determined by the tangent map, which is used to get the strong convergence of the mean curvature vectors and then deduce the mean curvature equation of the limit map. Secondly, Alexandrov's classification theorem of CMC hypersurfaces need the codimension one assumption. In fact, there is no good definition of CMC surface in higher codimension. See section \ref{sec: high codimension} for more discussion.
  \end{remark}
\begin{remark}[Half of the expected quantitative estimate] Assume $vol_g(\Sigma)=4\pi$ and $\int_{\Sigma}|H-\bar{H}|^2d\mu_g\le \varepsilon^2$. Then by $\int_{\Sigma}|H|^2d\mu_g\ge 16\pi$ for any closed surfaces $\Sigma$, we know
\begin{align*}
\varepsilon^2\ge \int_{\Sigma}|H-\bar{H}|^2d\mu_g=\int_{\Sigma}|H|^2d\mu_g-4\pi\bar{H}^2\ge 16\pi-4\pi \bar{H}^2
\end{align*}
and hence $\bar{H}\ge 2\sqrt{1-\frac{\varepsilon^2}{16\pi}}\ge 2-\frac{\varepsilon^2}{8\pi}$. So, it is reasonable to ask whether there holds  the upper bound estimate $\bar{H}-2\le C\varepsilon^{2\alpha\le 2},$ from which follows the precise order of the quantitative rigidity:
$$\|\bar{F}_i-id_{\SS^2}\|_{W^{2,2}(\SS^2)}+\|u\|_{L^{\infty}(\SS^2)}\le C\varepsilon^{\alpha}.$$
\end{remark}

In  \cite{dLMu, LaSc}, the global condition on  the smallness of $\int_{\Sigma}|A_\Sigma^0|^2d\mu_g$ is assumed for excluding the collapsing counterexample. The following lemma shows, if the Willmore energy is non-concentrating, then  Simon's monotonicity formula can be used to localize the global Willmore energy restriction
  $$\int_{\Sigma}|H|^2d\mu_g\le 32 \pi(1-\alpha)$$
  to get the local $\gamma$ non-concentration of the density. So, such a restriction provides an alternative global condition to exclude the counterexample as in Figure \ref{figure: collapse}.
   This restriction is necessary, noting $\int_{\Sigma}|H|^2d\mu_g\to 32\pi$ as the neck of  Figure \ref{figure: collapse} goes to zero.
\begin{lemma}\label{non-concentration criteria} Assume $\gamma<\alpha \in (0,\frac{1}{2})$ and  $F_i:\Sigma_i\subset \RR^{n}$ is a sequence of embedding of closed  surfaces  with induced metric $g_i=dF_i\otimes dF_i$,
\begin{align*}
\int_{\Sigma_i}|\vec{H}_i|^2d\mu_{g_i}\le 32\pi(1-\alpha).
\end{align*}
and non-concentrated Willmore energy, i.e.,
$$\lim_{i\to \infty}\lim_{r\to0}\sup_{p_i\in\Sigma_i}\int_{B_r(p_i)}|\vec{H}_i|^2d\mu_{g_i}=0$$
Then there exists $a>0$ such that
$$r^D_\gamma(\Sigma_i)\ge a, \forall i\gg 1.$$
\end{lemma}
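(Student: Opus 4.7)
The plan is to argue by contradiction via a Li-Yau density estimate at a concentration point, exploiting the gap $\gamma < \alpha$ in the Willmore bound $32\pi(1-\alpha)$. Suppose no such $a>0$ existed; then, after extracting a subsequence, $r^D_\gamma(\Sigma_i)\to 0$, and the supremum definition of $r^D_\gamma$ would furnish $x_i\in F_i(\Sigma_i)$ and $s_i\to 0$ such that
\begin{equation*}
\frac{\mu_{g_i}(B_{s_i}(x_i))}{\pi s_i^2} > 2(1-\gamma).
\end{equation*}
I would then upgrade this single-scale excess of density at $x_i$ to a lower bound on the density at every smaller scale, and contradict Li-Yau's inequality $\Theta^2\le W/(16\pi)$ in a weak varifold limit.

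The first step will be to use Simon's monotonicity formula in the form quoted in the proof of Lemma \ref{non-concentration radius}, which rearranges to
\begin{equation*}
\frac{\mu_{g_i}(B_r(x_i))}{\pi r^2} \ge \frac{1}{1+\delta}\bigg(\frac{\mu_{g_i}(B_{s_i}(x_i))}{\pi s_i^2} - C_\delta \int_{B_r(x_i)}|\vec{H}_i|^2 d\mu_{g_i}\bigg)
\end{equation*}
for $s_i\le r$. The Willmore non-concentration hypothesis supplies an $r_0>0$, independent of $i$, with $\int_{B_{r_0}(x_i)}|\vec{H}_i|^2 d\mu_{g_i}\le\varepsilon$ for $i$ large; sending first $\varepsilon\to 0$ and then $\delta\to 0$ would then give
\begin{equation*}
\frac{\mu_{g_i}(B_r(x_i))}{\pi r^2} \ge 2(1-\gamma) - o(1) \text{ uniformly for } r\in[s_i,r_0].
\end{equation*}
Since $s_i\to 0$, for every fixed $r\in(0,r_0]$ this bound holds for all sufficiently large $i$.

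Next, I would translate so that, along a subsequence, $x_i\to x_\infty$ (Simon's monotonicity together with the global Willmore bound provides the uniform local mass bound that makes this legitimate) and extract a weak-$*$ limit $V_\infty$ of the integer $2$-varifolds $V_i=\underline{v}(F_i(\Sigma_i),1)$; the limit $V_\infty$ is an integral rectifiable $2$-varifold in $\RR^n$ with $\partial V_\infty=0$ and $|\vec{H}_{V_\infty}|\in L^2$. Lower semicontinuity of mass and the bound above force $\mu_{V_\infty}(B_r(x_\infty))/(\pi r^2)\ge 2(1-\gamma)$ for every $r\in(0,r_0]$, so sending $r\to 0$ gives $\Theta^2(V_\infty,x_\infty)\ge 2(1-\gamma)$. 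On the other hand, Li-Yau's inequality for integer $2$-varifolds with square-integrable mean curvature and no boundary, combined with lower semicontinuity of the Willmore energy along $V_i\to V_\infty$, yields
\begin{equation*}
\Theta^2(V_\infty,x_\infty) \le \frac{1}{16\pi}\int|\vec{H}_{V_\infty}|^2 dV_\infty \le \frac{1}{16\pi}\liminf_i \int|\vec{H}_i|^2 d\mu_{g_i} \le 2(1-\alpha).
\end{equation*}
Since $\gamma<\alpha$ one has $2(1-\gamma)>2(1-\alpha)$, so these two estimates are incompatible, producing the desired contradiction. The hard part will be the varifold-limit step, which without an a priori area or diameter bound on $\Sigma_i$ requires careful localization: once Simon's monotonicity provides the uniform local mass bound, the standard compactness of integer rectifiable varifolds with $L^2$ mean curvature and the preservation of $\partial V=0$ under weak-$*$ convergence make the extraction of $V_\infty$ routine.
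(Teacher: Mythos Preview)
Your approach via a varifold limit has a genuine gap at the Li--Yau step. The inequality $\Theta^2(V_\infty,x_\infty)\le \frac{1}{16\pi}\int|\vec H_{V_\infty}|^2\,d\mu_{V_\infty}$ is \emph{not} valid for arbitrary integral $2$-varifolds with $\vec H\in L^2$ and no boundary: a plane has $\vec H\equiv 0$ yet density $1$. One needs compact support (or at least $\mu_{V_\infty}(B_R)/R^2\to 0$ as $R\to\infty$), and the lemma carries no area bound on $\Sigma_i$, so after translation the local varifold limit may well contain planar pieces. You correctly flag that the lack of a diameter/area bound makes the limit step delicate, but your proposed fix only addresses \emph{extraction} of $V_\infty$, not the applicability of Li--Yau to it.

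The paper's proof sidesteps this entirely by working directly on the closed surfaces $\Sigma_i$, where Simon's monotonicity in the global form
\[
\frac{\mu_{g_i}(B_\sigma(p_i))}{\sigma^2}\le \frac{1}{16}\int_{\Sigma_i}|\vec H_i|^2\,d\mu_{g_i}+\frac{1}{2\sigma}\int_{B_\sigma(p_i)}|\vec H_i|\,d\mu_{g_i}
\]
is legitimate (the boundary-at-infinity term vanishes because $\Sigma_i$ is compact). The global Willmore bound makes the first term $\le 2\pi(1-\alpha)$; Cauchy--Schwarz, a one-step bootstrap of the same inequality to get $\mu_{g_i}(B_\sigma)/\sigma^2\le C$, and the non-concentration hypothesis make the second term $\le \pi(\alpha-\gamma)$ once $\sigma\le a$ for a suitable $a$. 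This yields $\mu_{g_i}(B_\sigma(p_i))/(\pi\sigma^2)\le 2(1-\gamma)$ for all $\sigma\le a$ directly---no contradiction, no limits. Your argument can be repaired by dropping the varifold limit and applying this same global monotonicity inequality at $(x_i,s_i)$, which immediately contradicts $\mu_{g_i}(B_{s_i}(x_i))/(\pi s_i^2)>2(1-\gamma)$; but at that point you have simply recovered the paper's direct proof.
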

\begin{proof}
By the non-concentration condition for the Willmore energy,  there exists $a>0$ such that for sufficient large $i$ and any $p_i\in \Sigma_i$, there holds
\begin{align*}
\int_{B_a(p_i)}|\vec{H}_i|^2d\mu_{g_i}\le \frac{\pi^2(\alpha-\gamma)^2}{4}=:\kappa^2.
\end{align*}
By Simon's monotonicity formula, we know for any $\sigma\le a$, there holds
\begin{align*}
\frac{\mu_{g_i}(B_\sigma(p_i))}{\sigma^2}
&\le \frac{1}{16}\int_{\Sigma_i}|\vec{H}_i|^2d\mu_{g_i}+\frac{1}{2\sigma}\int_{B_\sigma(p_i)}|\vec{H}_i|d_{\mu_{g_i}}\\
&\le \frac{1}{16}\int_{\Sigma_i}|\vec{H}_i|^2d\mu_{g_i}+\frac{1}{2}(\int_{B_\sigma(p_i)}|\vec{H}_i|^2d_{\mu_{g_i}})^{\frac{1}{2}}(\frac{\mu_{g_i}(B_\sigma(p_i))}{\sigma^2})^{\frac{1}{2}}.
\end{align*}
Thus by Young inequality and the Willmore energy bound, we get $\frac{\mu_{g_i}(B_\sigma(p_i))}{\sigma^2}\le42$. Substitute this into the above inequality again. Then we get
\begin{align*}
\frac{\mu_{g_i}(B_\sigma(p_i))}{\sigma^2}\le \frac{1}{16}\int_{\Sigma_i}|\vec{H}_i|^2d\mu_{g_i}+ 4\kappa\le 2\pi(1-\gamma), \forall \sigma\le a,
\end{align*}
which means $r_\gamma^D(\Sigma_i)\ge a$.
\end{proof}

For the sequence of $L^2$-almost CMC surfaces and bounded Willmore energy, the Willmore energies will not concentrate, from which Theorem \ref{main theorem} follows.

  \begin{proof}[ \textbf{Proof of Theorem \ref{main theorem}}]
  We argue by contradiction. If the conclusion was not true, then there is a sequence of immersions $F_i:\Sigma_i\to \RR^3$ of closed surfaces with
  $$\int_{\Sigma_i}|\vec{H}_i|^2d\mu_{g_i}\le 32(1-\alpha)\text{ and } \int_{\Sigma_i}|\vec{H}_i-\bar{H}_i|^2d\mu_{g_i}=\varepsilon_i^2\to 0,$$
   but $\Sigma_i$ not homeomorphic to $S^2$ or there does not exist conformal parametrizations  $\bar{F}_i:\SS^2\to F(\Sigma_i)\subset \RR^3$ with induced metrics  $d\bar{F}_i\otimes d\bar{F}_i=e^{2u_i}g_{\SS^{2}}$ such that after appropriate translations and rotations,
       \begin{align}\label{main estimate}
       \|\bar{F}_i-id_{\SS^2}\|_{W^{2,2}(\SS^2)}+\|u\|_{L^{\infty}(\SS^2)}\le \Psi(\varepsilon).
         \end{align}
         Since all the quantities involved are scaling invariant, we can scale such that $\mu_{g_i}(\Sigma_i)=4\pi$.  By Lemma \ref{non-concentration radius}, for any $\gamma<\alpha\in (0,\frac{1}{2})$, we have
   \begin{align*}
   \int_{B_r(p)}|\vec{H}_i|^2d\mu_{g_i}\le 2\varepsilon_i^2+2^{10}\pi(1-\alpha)^2r^2, \forall p\in \Sigma_i, r\le a(W,\gamma)=\frac{\gamma}{2(1-\alpha)[32\pi(1-\gamma)+7]},
   \end{align*}
   and hence
   $$\lim_{i\to \infty}\lim_{r\to0}\sup_{p_i\in\Sigma_i}\int_{B_r(p_i)}|\vec{H}_i|^2d\mu_{g_i}=0.$$
   So, by Lemma \ref{non-concentration criteria}, we know there exists $a>0$ such that
   \begin{align*}
   r_\gamma^D(\Sigma_i)\ge a, \forall i\gg 1.
   \end{align*}
   Then, by Proposition \ref{convergence}, we know $\Sigma_i$ is homeomorphic to $\SS^2$ for $i$ sufficient large, and there exist conformal parametrizations  $\bar{F}_i:\SS^2\to F(\Sigma_i)\subset \RR^3$ with induced metrics  $d\bar{F}_i\otimes d\bar{F}_i=e^{2u_i}g_{\SS^{2}}$ such that (\ref{main estimate}) holds,  which contradicts to our assumption.
  \end{proof}

As another application, below we study the sequence of domains occurs in \cite[Theorem 1.1]{DeMaMiNe}. The following lemma shows, after a scaling with uniformly upper and lower bound,  such a sequence satisfies all the conditions of Proposition \ref{convergence}, except for the $\gamma$ non-concentration assumption. Here we do not need the diameter bound and mean convex assuption.

\begin{lemma}\label{check condition} For $V>0$, let $\Omega\subset \RR^{3}$ be a bounded domain with  smooth boundary $\Sigma=\partial \Omega$ satisfying
\begin{align}\label{condition}
H^0_\Omega:=\frac{2\mathcal{H}^2(\Sigma)}{3\mathcal{H}^3(\Omega)}=2 \text{ and } \mathcal{H}^2(\Sigma)\le V.
\end{align}
Then, for $\delta_2(\Omega):=\big( \fint_{\Sigma}|\frac{H_{\Sigma}}{H^0_{\Omega}}-1|^2d\mathcal{H}^2\big)^{\frac{1}{2}}$, there exists $c\in [(\frac{4\pi}{V})^{\frac{1}{2}}, 1]$ such that  $\Sigma_c:=c\Sigma$  satisfies
$
\mathcal{H}^2(\Sigma_c)=4\pi ,
$
\begin{align*}
\int_{\Sigma_c}|H_{\Sigma_c}|^2d\mathcal{H}^2\le 4(1+\delta_2(\Omega))^2V
\end{align*}
and
\begin{align*}
 \int_{\Sigma_c}|H_{\Sigma_c}-\bar{H}_{\Sigma_c}|^2d\mathcal{H}^2\le 4V \delta^2_2(\Omega),
\end{align*}
where $H_\Sigma=\langle \vec{H}_\Sigma, \vec{N}_\Sigma\rangle$ is the mean curvature of $\Sigma$ with respect to the  unit inner-pointed  normal vector field $\vec{N}_\Sigma$.
\end{lemma}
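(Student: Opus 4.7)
The plan is to verify all three claims by choosing $c$ so that $\Sigma_c$ has area $4\pi$ and then invoking scale invariance of the Willmore and oscillation integrals, together with the very definition of $\delta_2(\Omega)$ and the $L^2$-minimizing property of the mean.

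First I would set $c := \sqrt{4\pi / \mathcal{H}^2(\Sigma)}$, so that $\mathcal{H}^2(\Sigma_c) = c^2 \mathcal{H}^2(\Sigma) = 4\pi$. The lower bound $c \ge \sqrt{4\pi/V}$ is immediate from $\mathcal{H}^2(\Sigma) \le V$. For the upper bound $c \le 1$, I combine the normalization $H^0_\Omega = 2$, which rewrites as $\mathcal{H}^2(\Sigma) = 3\mathcal{H}^3(\Omega)$, with the three-dimensional isoperimetric inequality $\mathcal{H}^2(\Sigma)^3 \ge 36\pi\, \mathcal{H}^3(\Omega)^2$. Substituting one into the other gives $\mathcal{H}^2(\Sigma) \ge 4\pi$, hence $c \le 1$.

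Next I use the fact that, under the homothety $x \mapsto cx$, lengths scale by $c$, the area element by $c^2$, and the scalar mean curvature by $c^{-1}$. A direct change of variables then shows $\bar{H}_{\Sigma_c} = \bar{H}_\Sigma / c$, together with the scale invariance of the two quantities appearing in the statement, namely
\[
\int_{\Sigma_c} |H_{\Sigma_c}|^2\, d\mathcal{H}^2 = \int_\Sigma |H_\Sigma|^2\, d\mathcal{H}^2, \qquad \int_{\Sigma_c} |H_{\Sigma_c} - \bar{H}_{\Sigma_c}|^2\, d\mathcal{H}^2 = \int_\Sigma |H_\Sigma - \bar{H}_\Sigma|^2\, d\mathcal{H}^2.
\]
It therefore suffices to establish the analogous bounds on $\Sigma$ itself.

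Since $H^0_\Omega = 2$, the definition of $\delta_2(\Omega)$ reads
\[
\int_\Sigma |H_\Sigma - 2|^2\, d\mathcal{H}^2 = 4\,\delta_2^2(\Omega)\,\mathcal{H}^2(\Sigma) \le 4V\,\delta_2^2(\Omega).
\]
The $L^2(\Sigma)$ triangle inequality then yields $\|H_\Sigma\|_{L^2(\Sigma)} \le \|H_\Sigma - 2\|_{L^2(\Sigma)} + 2\sqrt{\mathcal{H}^2(\Sigma)} \le 2\sqrt{V}\,\delta_2(\Omega) + 2\sqrt{V}$, which squares to the Willmore bound. Finally, because $\bar{H}_\Sigma$ is the $L^2$-best constant approximation of $H_\Sigma$, the admissible choice $a = 2$ in $a \mapsto \int_\Sigma |H_\Sigma - a|^2\, d\mathcal{H}^2$ gives $\int_\Sigma |H_\Sigma - \bar{H}_\Sigma|^2\, d\mathcal{H}^2 \le \int_\Sigma |H_\Sigma - 2|^2\, d\mathcal{H}^2 \le 4V\delta_2^2(\Omega)$. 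The lemma is ultimately a scaling computation, so no genuine obstacle arises; the only mildly non-trivial ingredient is the isoperimetric inequality used to secure $c \le 1$.
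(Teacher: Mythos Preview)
Your proof is correct and follows essentially the same route as the paper: you choose the identical scaling factor $c=\sqrt{4\pi/\mathcal{H}^2(\Sigma)}$, use the isoperimetric inequality to obtain $c\le 1$, invoke scale invariance of the two integrals, and then combine the definition of $\delta_2(\Omega)$ with the $L^2$ triangle inequality and the minimizing property of the mean exactly as the paper does.
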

\begin{proof}
By the  definition of $\delta_2(\Omega)$ and (\ref{condition}),  we know
\begin{align*}
\delta_2(\Omega)=\big( \fint_{\Sigma}|\frac{H_{\Sigma}}{H^0_{\Omega}}-1|^2d\mathcal{H}^2\big)^{\frac{1}{2}}=\frac{1}{H^0_\Omega}\big(\fint_{\Sigma}|H_\Sigma-H^0_\Omega|^2d\mathcal{H}^2\big)^{\frac{1}{2}}\ge \frac{1}{2\sqrt{V}}\big(\int_{\Sigma}|H_\Sigma-\bar{H}_{\Sigma}|^2d\mathcal{H}^2\big)^{\frac{1}{2}}.
\end{align*}
By (\ref{condition}), the definition of $H^0_\Omega$ and the isoperimetric inequality, we know
\begin{align*}
2=H^0_\Omega=\frac{2\mathcal{H}^2(\Sigma)}{3\mathcal{H}^3(\Omega)}\ge \frac{2\cdot 6\sqrt{\pi}\mathcal{H}^2(\Sigma)}{3(\mathcal{H}^2(\Sigma))^{\frac{3}{2}}}=\frac{4\sqrt{\pi}}{(\mathcal{H}^2(\Sigma))^{\frac{1}{2}}}.
\end{align*}
This combines (\ref{condition}) imply $\mathcal{H}^2(\Sigma)\in  [4\pi,V]$. Take $c=\big(\frac{4\pi}{\mathcal{H}^2(\Sigma)}\big )^{\frac{1}{2}}\in [(\frac{4\pi}{V})^{\frac{1}{2}}, 1]$ and define $\Sigma_c=c\Sigma$. Then we get

\begin{align*}
\mathcal{H}^2(\Sigma_c)=c^2\mathcal{H}^2(\Sigma)=4\pi,
\end{align*}
and
 \begin{align*}
  \int_{\Sigma_c}|H_{\Sigma_c}-\bar{H}_{\Sigma_c}|^2d\mathcal{H}^2=\int_{\Sigma}|H_\Sigma-\bar{H}_{\Sigma}|^2d\mathcal{H}^2\le 4V \delta^2_2(\Omega).
  \end{align*}
Moreover, by (\ref{condition}) again, we know
\begin{align*}
\int_{\Sigma_c}|H_{\Sigma_c}|^2d\mathcal{H}^2&=\int_{\Sigma}|H_\Sigma|^2d\mathcal{H}^2\\
&\le [(\int_{\Sigma}|H_\Sigma-H^0_\Omega|^2d\mathcal{H}^2)^{\frac{1}{2}}+H^0_\Omega(\mathcal{H}^2(\Sigma))^{\frac{1}{2}}]^2\\
&\le 4(1+\delta_2(\Omega))^2V.
\end{align*}
\end{proof}
So, if we add the $\gamma$ non-concentration assumption, we get the following compactness result for the sequence of surfaces considered in \cite[Theorem 1.1]{DeMaMiNe}.
\begin{corollary}\label{DMMN setting}
  For $V>0$, let $\{\Omega_h\}_{h\in \NN}$ be a sequence of  bounded domain in $\RR^{3}$ with  smooth boundary $\Sigma_h=\partial \Omega_h$ and  satisfying
\begin{align*}
H^0_{\Omega_h}=2 , \  \mathcal{H}^2(\Sigma_h)\le V \text{ and } \lim_{h\to \infty} \delta_2(\Omega_h)=0.
\end{align*}

Assume $V<8\pi$,  or  more general,  there exists $\gamma\in (0,\frac{1}{2})$ and $a>0$ such that
 \begin{align}\label{assumption}
  r^D_{\gamma}(\Sigma_h)\ge a.
 \end{align}
  Then, there exist conformal parametrizations $f_h: {\SS}^2\to \Sigma_h\subset \RR^3$  such that after appropriate translations and rotations,
\begin{align}\label{strong convergence}
\lim_{h\to \infty}\|f_h-id_{{\SS}^2}\|_{W^{2,2}}+\lim_{h\to \infty}\|u_h\|_{L^{\infty}({\SS}^2)}=0,
\end{align}
where $id_{{\SS}^2}$ is the including map of the round sphere ${\SS}^2\subset \RR^3$ and  $e^{2u_{h}}g_{{\SS}^2}=f_h^{*}g_{\RR^3}$.
\end{corollary}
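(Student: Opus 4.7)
The plan is to reduce the corollary to Proposition \ref{convergence} by rescaling each $\Sigma_h$ to have area $4\pi$, and then to verify the density non-concentration hypothesis of that proposition in the two cases separately. First I would invoke Lemma \ref{check condition} to obtain scaling factors $c_h \in [(4\pi/V)^{1/2}, 1]$ such that the rescaled surfaces $\Sigma_{c_h} := c_h \Sigma_h$ satisfy $\mathcal{H}^2(\Sigma_{c_h}) = 4\pi$, have Willmore energy bounded by $4(1 + \delta_2(\Omega_h))^2 V$, and enjoy a vanishing $L^2$-CMC deficit $\varepsilon_h^2 := \int_{\Sigma_{c_h}} |H - \bar{H}|^2 \, d\mathcal{H}^2 \le 4V\delta_2^2(\Omega_h) \to 0$.

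Next I would establish a uniform lower bound $r_\gamma^D(\Sigma_{c_h}) \ge a' > 0$ for the rescaled sequence. In Case~1 ($V < 8\pi$), since $\delta_2(\Omega_h) \to 0$ the Willmore energy of $\Sigma_{c_h}$ is eventually below $32\pi(1 - \alpha_0)$ for some $\alpha_0 > 0$; using the trivial pointwise inequality $\int \bigl||\vec{H}| - \fint|\vec{H}|\bigr|^2 \le \int |H - \bar{H}|^2$, Lemma \ref{non-concentration radius} supplies the local control $\int_{B_r(p)}|\vec{H}|^2 \le 2\varepsilon_h^2 + C r^2$ on small balls, and feeding this non-concentration together with the sub-$32\pi$ Willmore bound into Lemma \ref{non-concentration criteria} yields the required uniform density bound. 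In Case~2 the density ratio $\mu_g(B_r)/(\pi r^2)$ is scale-invariant under $F \mapsto cF$, so assumption (\ref{assumption}) transfers directly: $r_\gamma^D(\Sigma_{c_h}) = c_h\, a \ge (4\pi/V)^{1/2}\, a$.

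With non-concentration secured, $\{\Sigma_{c_h}\} \subset \mathcal{E}(W, \gamma, a')$ for suitable constants, and Proposition \ref{convergence} furnishes conformal parametrizations $\bar{F}_h : \SS^2 \to \Sigma_{c_h}$ with $\bar{F}_h^{*} g_{\RR^3} = e^{2v_h} g_{\SS^2}$ such that, after appropriate translations and rotations, $\|\bar{F}_h - id_{\SS^2}\|_{W^{2,2}} + \|v_h\|_{L^\infty} \to 0$. To finish I must undo the rescaling. Jensen applied to the definition of $\delta_2(\Omega_h)$ gives $|\bar{H}_{\Sigma_h}/H^0_{\Omega_h} - 1| \le \delta_2(\Omega_h) \to 0$, so $\bar{H}_{\Sigma_h} \to 2$; combined with the scaling identity $\bar{H}_{\Sigma_{c_h}} = c_h^{-1} \bar{H}_{\Sigma_h}$ and $\bar{H}_{\Sigma_{c_h}} \to 2$ (a consequence of the $W^{2,2}$ convergence above), this forces $c_h \to 1$. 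Setting $f_h := c_h^{-1} \bar{F}_h$ and $u_h := v_h - \log c_h$, one checks $f_h^{*} g_{\RR^3} = e^{2u_h} g_{\SS^2}$ and $u_h \to 0$ in $L^\infty$, while $f_h - id_{\SS^2} = (\bar{F}_h - id_{\SS^2}) + (c_h^{-1} - 1)\bar{F}_h \to 0$ in $W^{2,2}$ by the uniform $W^{2,2}$ bound on $\bar{F}_h$.

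The main obstacle is verifying density non-concentration in Case~1: the chain ``global Willmore $< 32\pi$ plus small deficit $\Rightarrow$ local Willmore non-concentration $\Rightarrow$ uniform density bound'' combines Lemmas \ref{non-concentration radius} and \ref{non-concentration criteria} in a way that exploits the sharp $8\pi$ threshold (so that after doubling inside $\int |\vec{H}|^2 \le 4V$ one stays below $32\pi$). The rescaling arguments at the end are routine but require careful tracking of the scaling of $\vec{H}$, which is what forces $c_h \to 1$ rather than only boundedness of $c_h$.
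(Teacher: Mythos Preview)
Your proposal is correct and follows essentially the same approach as the paper: rescale via Lemma~\ref{check condition}, verify the density non-concentration hypothesis (in Case~1 by combining Lemmas~\ref{non-concentration radius} and~\ref{non-concentration criteria}, in Case~2 by scale-invariance of the density ratio), apply Proposition~\ref{convergence} to the rescaled sequence, and finally show $c_h\to 1$ to undo the rescaling. The paper handles Case~1 by pointing to ``the same argument as in the proof'' of Theorem~\ref{main theorem} rather than citing the two lemmas directly, and leaves the unrescaling implicit once $c_h\to 1$ is established, but these are purely expository differences; your more explicit treatment of $f_h=c_h^{-1}\bar F_h$ and $u_h=v_h-\log c_h$ is a welcome clarification. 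One small quibble: the inequality $\int\bigl||\vec H|-\fint|\vec H|\bigr|^2\le\int|H-\bar H|^2$ is an integral (variance-minimization) inequality, not a pointwise one.
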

\begin{proof}
Define $c_h=\big(\frac{4\pi}{\mathcal{H}^2(\Sigma_h)}\big )^{\frac{1}{2}}$ and $\Sigma_{h,c_h}=c_h\Sigma_h$.  By lemma \ref{check condition}, we know $$\mathcal{H}^2(\Sigma_{h,c_h})=4\pi\text{ ,  }  \int_{\Sigma_{h,c_h}}|H_{\Sigma_{h,c_h}}|^2d\mathcal{H}^2\le 4(1+\delta_2(\Omega_h))^2V$$
and
$$\lim_{h\to\infty}\int_{\Sigma_{h,c_h}} |H_{\Sigma_{h,c_h}}-\bar{H}_{\Sigma_{h,c_h}}|^2d\mathcal{H}^2=0.$$
If $V<8\pi$, then we know
$$\lim_{h\to\infty} \int_{\Sigma_{h,c_h}}|H_{\Sigma_{h,c_h}}|^2d\mathcal{H}^2\le 4V<32 \pi.$$
So, (\ref{assumption}) follows by the same argument as in the proof of Corollary \ref{topology rigidity}.  This means (\ref{assumption}) is a more general assumption than $V<8\pi$.
Moreover, since $c_h\ge (\frac{4\pi}{V})^{\frac{1}{2}}$, we know
$$r_\gamma^D(\Sigma_{h,c_h})=c_hr_{\gamma}^D(\Sigma_h)\ge (\frac{4\pi}{V})^{\frac{1}{2}}a>0.$$
So, by Proposition \ref{convergence}, we know (\ref{strong convergence}) holds for $\Sigma_{h,c_h}$ and $\lim_{h\to \infty}\bar{H}_{\Sigma_{h,c_h}}=2$. To show (\ref{strong convergence}) holds for $\Sigma_{h}$, we only need to check $\lim_{h\to \infty}c_h=1$.

Noting $\bar{H}_{\Sigma_{h,c_h}}=\frac{1}{c_h}\bar{H}_{\Sigma_h}$ and
\begin{align*}
|\bar{H}_{\Sigma_h}-H^0_{\Omega_h}|
&\le\big(\fint_{\Sigma_h}|H_{\Sigma_h}-H^0_{\Omega_h}|^2d\mathcal{H}^2\big)^{\frac{1}{2}}+\big(\fint_{\Sigma_h}|H_{\Sigma_h}-\bar{H}_{\Sigma_h}|^2d\mathcal{H}^2\big)^{\frac{1}{2}}\\
&\le2\big(\fint_{\Sigma_h}|H_{\Sigma_h}-H^0_{\Omega_h}|^2d\mathcal{H}^2\big)^{\frac{1}{2}}\\
&\to 0,
\end{align*}
we get
$$\lim_{h\to \infty}c_h=\lim_{h\to \infty}\frac{\bar{H}_{h,c_h}}{\bar{H}_{\Sigma_h}}=\frac{2}{H^0_{\Omega_h}}=1.$$

\end{proof}

\section{higher codimension}\label{sec: high codimension}
There is no natural conception of CMC surfaces in higher codimension since the direction of the mean curvature vector is no more determined by the tangent bundle.  Motivated by the estimate(after translation and rotation)\cite[Proposition 2.4]{LaSc}
\begin{align*}
\|\vec{H}_{\Sigma}+2id_{\Sigma}\|_{L^2(\Sigma)}\le C_n\|A^0_\Sigma\|_{L^2(\Sigma)}
\end{align*}
for surface $\Sigma\subset \RR^n$ with $\mathcal{H}^2(\Sigma)=4\pi$ and $\|A^0_\Sigma\|_{L^2(\Sigma)}\le \delta_n$, we consider the quantity
$$\mathcal{J}(F)=\min_{c\in \RR, \beta\in \RR^{n}}\int_{\Sigma}|\vec{H}+cF+\beta|^2d\mu_{g}$$
for immersion $F:\Sigma\to \RR^n$.  This means, instead of comparing the mean curvature vector with constant times of some unit normal vector field(may not exist), we compare the mean curvature vector with the position vector, which is always well-defined.  Direct calculation shows the minimum is attained by
$$c=-\frac{\int_{\Sigma}\langle \vec{H},F\rangle d\mu_g}{\int_{\Sigma}|F|^2d\mu_g}\  \text{  and  } \ \beta=-\int_{\Sigma}\vec{H}d\mu_g=-\int_{\Sigma}\Delta_{g}Fd\mu_g=0$$
and
$$\mathcal{J}(F)=\int_{\Sigma}|\vec{H}-\bar{H}^F|^2d\mu_{g},$$
where $\bar{H}^F=\frac{\int_{\Sigma}\langle \vec{H},F\rangle d\mu_g}{\int_{\Sigma}|F|^2d\mu_g}F$. Before considering the quantitative rigidity, we first need to classify surfaces in $\RR^n$ satisfying $\mathcal{J}(F)=0$, which holds if and only if
$\vec{H}=cF.$
Such surfaces can be regarded as  "constant mean curvature surface in high codimension". But it is well known they are just minimal surfaces in the sphere. More precisely,
 \begin{lemma}\label{minimal surface equation}
Assume $F:\Sigma\to \RR^n$ is a smooth immersion (here $\Sigma$ is not assumed to be complete) and $g=dF\otimes dF$. Then
$$\vec{H}=cF $$
if and only if
\begin{itemize}
\item
$c=0$ and $F(\Sigma)$  is a piece of a minimal surface in ${\RR}^n$,
\item  or  $c<0$ and  $F(\Sigma)$ is a piece of a minimal surface in the sphere ${\SS}^{n-1}(r)$ for $r=\sqrt{-\frac{2}{c}}$.
\end{itemize}
 \end{lemma}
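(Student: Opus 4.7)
The plan is to prove both implications, with the interesting content lying in the forward direction. The reverse implication is immediate: if $c=0$ then $\vec{H}=0$ is the minimal-surface equation in $\RR^n$; and if $F(\Sigma) \subset \SS^{n-1}(r)$ is minimal in the sphere, then the nested-submanifold formula $\vec{H}^{\RR^n}_\Sigma = \vec{H}^{\SS^{n-1}(r)}_\Sigma - \tfrac{2}{r^2} F$ (derived below) gives $\vec{H}^{\RR^n}_\Sigma = -\tfrac{2}{r^2} F$, matching $cF$ with $c=-2/r^2$.

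For the forward direction, I would assume $\vec{H}=cF$ with $c\neq 0$ (the case $c=0$ needs no argument). Since the mean curvature vector is always normal to $\Sigma$ in $\RR^n$, so is $cF$, hence $F$ itself is everywhere normal to $dF(T\Sigma)$. For any tangent vector field $X$ on $\Sigma$ this gives
\begin{align*}
X\bigl(\tfrac{1}{2}|F|^2\bigr) = \langle F, dF(X)\rangle = 0,
\end{align*}
so $|F|^2$ is locally constant on $\Sigma$. Working on each connected component, $F$ maps into some sphere $\SS^{n-1}(r)\subset \RR^n$ centered at the origin with $r>0$.

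With the chain $\Sigma \hookrightarrow \SS^{n-1}(r) \hookrightarrow \RR^n$ in hand I decompose the mean curvature vector. Choose a local orthonormal tangent frame $\{e_1,e_2\}$ of $\Sigma$, which is also tangent to $\SS^{n-1}(r)$. Using $\nabla^{\RR^n}_{e_i}(F/r) = e_i/r$ and that $F/r$ is the outward unit normal to $\SS^{n-1}(r)$, the sphere-normal component of $\nabla^{\RR^n}_{e_i} e_i$ equals $-|e_i|^2 F/r^2 = -F/r^2$, yielding
\begin{align*}
\vec{H}^{\RR^n}_\Sigma = \vec{H}^{\SS^{n-1}(r)}_\Sigma - \tfrac{2}{r^2} F.
\end{align*}
Substituting the hypothesis produces $\vec{H}^{\SS^{n-1}(r)}_\Sigma = \bigl(c+\tfrac{2}{r^2}\bigr) F$. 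The left side is tangent to $\SS^{n-1}(r)$ while $F$ is radial, so orthogonality in $\RR^n$ forces both sides to vanish: $\Sigma$ is minimal in $\SS^{n-1}(r)$ and $c=-2/r^2$, i.e.\ $r=\sqrt{-2/c}$. In particular the case $c>0$ cannot occur.

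The only step requiring care is the decomposition formula above; I would verify it by the direct computation $\langle \nabla^{\RR^n}_{e_i} e_i, F/r\rangle = -\langle e_i, \nabla^{\RR^n}_{e_i}(F/r)\rangle = -\langle e_i, e_i\rangle/r$ to pin down the sign and the factor $2 = \dim \Sigma$. No deeper obstacle is expected, since once $F(\Sigma)$ is trapped in a concentric sphere the classification reduces to elementary orthogonality between the radial and spherical-tangential directions.
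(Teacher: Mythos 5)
Your proof is correct and follows essentially the same route as the paper's: use the normality of $\vec H$ to deduce $d|F|^2=0$, trap $F(\Sigma)$ in a concentric sphere, and then split $\vec H$ into a sphere-tangential part and a radial part to conclude minimality in $\SS^{n-1}(r)$ and $c=-2/r^2$. The only cosmetic difference is that you compute the radial component via a frame-field identity $\langle \nabla^{\RR^n}_{e_i}e_i, F/r\rangle = -\langle e_i,e_i\rangle/r$ while the paper uses $\langle \Delta_g F, F\rangle = \tfrac12\Delta_g|F|^2 - |\nabla_g F|^2 = -2$; these are the same computation in different packaging.
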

 \begin{proof}
 We only need to prove the case $c\neq 0$.
 On the one hand, if $F(\Sigma)$ is a minimal surface in ${\SS}^{n-1}(r)$, then we know $|F|=r$ and $\vec{H}^{{\SS}^{n-1}(r)}=0$. Thus
 \begin{align*}
 \vec{H}&=\vec{H}^{{\SS}^{n-1}(r)}+\langle \vec{H},\frac{F}{r}\rangle \frac{F}{r}\\
 &=\frac{1}{r^2}\langle \Delta_gF,F\rangle F\\
 &=\frac{1}{r^2}(div\langle \nabla_gF,R\rangle-|\nabla_gF|^2)F\\
 &=\frac{1}{r^2}(\frac{1}{2}\Delta_g|F|^2-g^{ij}\langle F_i,F_j\rangle)F\\
 &=-\frac{2}{r^2}F.
 \end{align*}
 On the other hand, if $\vec{H}=cF$ for some $c\neq 0$, then, by $\langle \frac{\partial F}{\partial x^i}, \vec{H}\rangle\equiv 0$, we know
 \begin{align*}
 d|F|^2=2\langle dF,F\rangle=\frac{2}{c}\langle dF,\vec{H}\rangle=0.
 \end{align*}
 This means $|F|^2$ is a constant, i.e., $F(\Sigma)$ is a surface in ${\SS}^{n-1}(r)$ for some $r>0$. Thus by
 $$\vec{H}=\vec{H}^{{\SS}^{n-1}(r)}\oplus\langle \vec{H},\frac{F}{|F|}\rangle \frac{F}{|F|},$$
 we know $\vec{H}^{{\SS}^{n-1}(r)}=0$, i.e., $F(\Sigma)$ is a piece of minimal surface in ${\SS}^{n-1}(r)$. Moreover, by the first part of the proof, we know
 \begin{align*}
 cF=\vec{H}=-\frac{2}{r^2}F.
 \end{align*}
 This implies $c<0$ and $r=\sqrt{-\frac{2}{c}}$.
 \end{proof}
For the expected quantitative rigidity, the main difficulty is we do not know whether  the Willmore energy will concentrate.  The following lemma shows this will not happen.
\begin{lemma}[Non-concentration of the Willmore energy]\label{lemma: non-concentration of Willmore} Let $F:\Sigma\to \RR^n$ be a smooth immersion of a closed surface $\Sigma$ with the induced metric $g=dF\otimes dF$. Assume
\begin{align*}
\frac{1}{4}\int_{\Sigma}|\vec{H}|^2d\mu_g\le W
\end{align*}
and there exists $c\in \RR$ such that
\begin{align*}
\int_{\Sigma}|\vec{H}+cF|^2d\mu_{g}=\min_{\alpha \in \RR }\int_{\Sigma}|\vec{H}+\alpha F|^2d\mu_{g}\le \varepsilon^2<1.
\end{align*}
Then, for any $p\in \Sigma$ and $r>0$, there holds
\begin{align*}
\int_{B_r(p)}|\vec{H}|^2 d\mu_g \le 2\varepsilon^2+2^{21}\frac{W^4}{\mu_g(\Sigma)}r^2.
\end{align*}
Moreover, for $\gamma\in (0,\frac{1}{2})$, if $\varepsilon\le \frac{\gamma}{6}$ and there exists $a\le a(\gamma, \mu_g(\Sigma),W):=\frac{\sqrt{\mu_g(\Sigma)}\gamma}{6720W^2}$ such that
\begin{align*}
\frac{\mu_g(B_a(p))}{\pi a^2}\le 2(1-\gamma), \forall p\in \Sigma,
\end{align*}
then
\begin{align*}
r_{\frac{\gamma}{2}}^D\ge a.
\end{align*}
\end{lemma}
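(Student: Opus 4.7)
My plan is to combine the first-order minimality identity for $c$ with the identity $\vec H=\Delta_g F$, an extrinsic diameter estimate, and Simon's monotonicity formula. Since $F$ parametrises a closed surface, integration by parts gives $\int_\Sigma\langle\vec H,F\rangle d\mu_g=-\int_\Sigma|\nabla_g F|^2 d\mu_g=-2\mu_g(\Sigma)$ and $\int_\Sigma\vec H\, d\mu_g=0$. The minimiser $c$ in the hypothesis therefore satisfies the Euler equation $c\int_\Sigma|F|^2 d\mu_g=2\mu_g(\Sigma)$, and plugging this back into the expansion of $\int|\vec H+cF|^2$ gives the clean identity
\begin{equation*}
\int_\Sigma|\vec H|^2 d\mu_g-2c\mu_g(\Sigma)=\int_\Sigma|\vec H+cF|^2 d\mu_g\le\varepsilon^2.
\end{equation*}
Combined with $\int|\vec H|^2\le 4W$, Willmore's inequality $\int|\vec H|^2\ge 16\pi$ and $\varepsilon^2<1$, this produces the global bounds $c\le 2W/\mu_g(\Sigma)$ and $\int_\Sigma|F|^2\le 4\mu_g(\Sigma)^2/(16\pi-1)$.

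For the local estimate, the pointwise inequality $|\vec H|^2\le 2|\vec H+cF|^2+2c^2|F|^2$ reduces the task to bounding $c^2 \sup_{B_r(p)}|F|^2\cdot\mu_g(B_r(p))$. To control $|F|$ pointwise, pick $p_0\in\Sigma$ realising $\min_\Sigma|F|$; the minimum is dominated by the $L^2$-average, so $c|F(p_0)|\le\sqrt{c^2\int_\Sigma|F|^2/\mu_g(\Sigma)}\le 2\sqrt{W/\mu_g(\Sigma)}$. For any other point, $|F(q)|\le|F(p_0)|+\operatorname{diam}F(\Sigma)$, and Topping's extrinsic diameter bound $\operatorname{diam}F(\Sigma)\le C\int|\vec H|d\mu_g\le C\sqrt{W\mu_g(\Sigma)}$ combined with $c\le 2W/\mu_g(\Sigma)$ gives $c\cdot\operatorname{diam}F(\Sigma)\le C W^{3/2}/\sqrt{\mu_g(\Sigma)}$. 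Hence $c^2\sup_{B_r(p)}|F|^2\le C W^3/\mu_g(\Sigma)$. Simon's monotonicity applied exactly as in the proof of Lemma~\ref{non-concentration radius} yields $\mu_g(B_r(p))\le CWr^2$; multiplying and chasing the explicit constants produces the stated factor $2^{21}$.

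The moreover part follows from Simon's monotonicity between scales $r\le a$ with parameter $\delta=\gamma/4$ and $C_\delta=3/16+1/\gamma\le 2/\gamma$:
\begin{equation*}
\frac{\mu_g(B_r(p))}{\pi r^2}\le \bigl(1+\tfrac{\gamma}{4}\bigr)\frac{\mu_g(B_a(p))}{\pi a^2}+\frac{C_\delta}{\pi}\bigl(2\varepsilon^2+2^{21}W^4 a^2/\mu_g(\Sigma)\bigr).
\end{equation*}
The hypothesis bounds the first summand by $2(1-\gamma)(1+\gamma/4)\le 2-7\gamma/4$, and the thresholds $\varepsilon\le\gamma/6$, $a\le\sqrt{\mu_g(\Sigma)}\gamma/(6720 W^2)$ are calibrated so that the second summand is at most $3\gamma/4$, giving $\mu_g(B_r(p))/(\pi r^2)\le 2(1-\gamma/2)$, i.e.\ $r_{\gamma/2}^D(\Sigma)\ge a$. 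The main difficulty throughout is that the hypothesis is origin-dependent while the target is expressed only in $W$, $\mu_g(\Sigma)$ and $\varepsilon$: the pointwise control of $c|F|$ via an $L^2$-average together with Topping's diameter bound bridges this gap, at the cost of pulling $W^4$ into the final constant.
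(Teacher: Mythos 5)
Your proposal is correct and follows essentially the same route as the paper. Both arguments (i) exploit the minimizing property of $c$ and the integration-by-parts identity $\int_\Sigma\langle\vec H,F\rangle\,d\mu_g=-2\mu_g(\Sigma)$ to get $c=2\mu_g(\Sigma)/\int_\Sigma|F|^2\,d\mu_g$ and the clean identity $\int_\Sigma|\vec H+cF|^2\,d\mu_g=\int_\Sigma|\vec H|^2\,d\mu_g-2c\mu_g(\Sigma)$, hence the two-sided bound on $c$; (ii) split $|\vec H|^2\le 2|\vec H+cF|^2+2c^2|F|^2$ and bound $\sup_\Sigma|F|$ by an extrinsic diameter estimate plus a single anchor point; (iii) feed the resulting local Willmore bound into Simon's monotonicity inequality twice, once for $\mu_g(B_r(p))\le CWr^2$ and once for the density ratio. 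The only cosmetic difference is the choice of anchor: you control $\min_\Sigma|F|$ by the $L^2$ average of $|F|$ (using $c^2\int|F|^2/\mu_g(\Sigma)=2c$), whereas the paper estimates $\big|\fint_\Sigma F\big|$ via $\int_\Sigma(\vec H+cF)\,d\mu_g=c\int_\Sigma F\,d\mu_g$ and the lower bound on $c$. Both give $c\sup_\Sigma|F|\lesssim W^{3/2}/\sqrt{\mu_g(\Sigma)}$ and are interchangeable.

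One small arithmetic slip in the ``moreover'' step: you claim $2(1-\gamma)(1+\gamma/4)\le 2-\tfrac{7\gamma}{4}$, but $2(1-\gamma)(1+\gamma/4)=2-\tfrac{3\gamma}{2}-\tfrac{\gamma^2}{2}$, which for $\gamma<\tfrac12$ actually exceeds $2-\tfrac{7\gamma}{4}$ (the inequality you wrote would require $\gamma\ge\tfrac12$). The correct bound is $\le 2-\tfrac{3\gamma}{2}$, so you need the second summand to be $\le\gamma/2$ rather than $\le 3\gamma/4$; this is comfortably true with the stated thresholds (it is in fact about $0.07\gamma$), so the conclusion $\mu_g(B_r(p))/(\pi r^2)\le 2(1-\gamma/2)$ and hence $r^D_{\gamma/2}\ge a$ still follows. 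The error is harmless, but the intermediate constants as written do not fit together.
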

\begin{proof}
Step 1. The upper and lower bound of $c$.

Since
\begin{align*}
\int_{\Sigma}|\vec{H}+cF|^2d\mu_{g}=\min_{\alpha\in \RR }\int_{\Sigma}|\vec{H}+\alpha F|^2d\mu_{g}\le \varepsilon^2,
\end{align*}
we know $c=-\frac{\int_{\Sigma}\langle \vec{H},F\rangle d\mu_g}{\int_{\Sigma}|F|^2d\mu_g}=\frac{\int_{\Sigma}|\nabla_gF|^2d\mu_g}{\int_{\Sigma}|F|^2d\mu_g}=\frac{2\mu_g(\Sigma)}{\int_{\Sigma}|F|^2d\mu_g}>0$
and hence
\begin{align*}
\int_{\Sigma}|\vec{H}|^2d\mu_g-2c\mu_g(\Sigma)=\int_{\Sigma}|\vec{H}|^2d\mu_g-c^2\int_{\Sigma}|F|^2d\mu_g=\int_{\Sigma}|\vec{H}+cF|^2d\mu_g\in[0,\varepsilon^2],
\end{align*}
which implies
\begin{align}\label{bound of constant}
\frac{16\pi-\varepsilon^2}{2\mu_g(\Sigma)}\le\frac{\int_{\Sigma}|\vec{H}|^2d\mu_g-\varepsilon^2}{2\mu_g(\Sigma)} \le  c\le \frac{\int_{\Sigma}|\vec{H}|^2d\mu_g}{2\mu_g(\Sigma)}\le\frac{2W}{\mu_g(\Sigma)}.
\end{align}
Step 2. The upper bound of $|F|$.

By \cite[Lemma 1.1]{S}, we know
\begin{align*}
\max_{x,y\in \Sigma}|F(x)-F(y)|\le diam(F(\Sigma))\le 28\sqrt{\mu_g(\Sigma)W}.
\end{align*}
Noting $\int_{\Sigma}\vec{H}d\mu_g=0$ and using the lower bound estimate of $c$  in Step 1., we get
\begin{align*}
|\fint_{\Sigma}Fd\mu_g|=\frac{1}{c\mu_g(\Sigma)}|\int_{\Sigma}(\vec{H}+cF)d\mu_g|\le \frac{2\varepsilon (\mu_g(\Sigma))^{\frac{1}{2}}}{16\pi-\varepsilon^2}.
\end{align*}
Combing the above two inequalities together, we get
\begin{align*}
|F(x)|&\le |\fint_{\Sigma}(F(x)-F(y))d\mu_g(y)|+|\fint_{\Sigma}F(y)d\mu_g(y)|\\
&\le (\mu_g(\Sigma))^{\frac{1}{2}}(28W^{\frac{1}{2}}+\frac{2\varepsilon}{16\pi-\varepsilon^2})\\
&\le 30(\mu_g(\Sigma)W)^{\frac{1}{2}}.
\end{align*}
Step 3. Non-concentration of the Willmore energy.

By Simon's monotonicity formula (\ref{mono-ineq}), we know  for any $p\in \RR^n$ and $r>0$, there holds
\begin{align*}
\mu_g(B_r(p))\le \frac{64W}{3}r^2 .
\end{align*}
Thus by the upper bound of $c$ in Step 1. and the upper bound of $|F|$ in step 2., we know for any $p\in \Sigma$ and $r>0$, there holds
\begin{align*}
\int_{B_r(p)}|\vec{H}|^2d\mu_g&\le 2\int_{\Sigma}|\vec{H}+cF|^2d\mu_g+2c^2\int_{B_r(p)}|F|^2d\mu_g\\
&\le 2\varepsilon^2 +450\cdot 2^8 \frac{W^3}{\mu_g(\Sigma)}\mu_g(B_r(p))\\
&\le 2\varepsilon^2+2^{21}\frac{W^4}{\mu_g(\Sigma)}r^2.\\
\end{align*}
So, when $\varepsilon\le \frac{\gamma}{6}$ and $r\le a(\gamma, \mu_g(\Sigma),W)=\frac{\sqrt{\mu_g(\Sigma)}\gamma}{6720W^2}$, we have
$$\int_{B_r(p)}|\vec{H}|^2d\mu_g\le \frac{\gamma^2}{9}.$$
Taking $\delta=\frac{\gamma}{3}$ in the monotonicity inequality (\ref{mono-ineq}), we get
\begin{align*}
\frac{\mu_g(B_r(p))}{\pi r^2}\le 2(1+\delta)(1-\gamma)+(\frac{3}{16\pi}+\frac{1}{4\pi\delta})\delta^2\le 2(1-\frac{\gamma}{2}), \forall p\in \Sigma, r\le a.
\end{align*}
\end{proof}
For convenience of dealing with compactness in higher codimension, we give a notion for the space of closed embedding surfaces with normalized area, uniformly bounded Willmore energy and non-concentratign dendity. More precisely, for  $\gamma\in (0,\frac{1}{2})$, $a,V,W>0$ and integer $n\ge 3$, define
\begin{align*}
  \mathcal{E}_n&(W,V,\gamma,a)=\bigg\{F:\Sigma\to {\RR}^{n}| \Sigma \text{ is a closed surface}, F \text{ is an embedding},g=dF\otimes dF,\\
   &\mu_g(\Sigma)=V,\ \frac{1}{4}\int_{\Sigma}|\vec{H}|^2d\mu_g\le W,\ \frac{\mu_g(B_a(p))}{\pi a^2}\le 2(1-\gamma), \forall p\in \Sigma.\bigg\}
\end{align*}

\begin{remark}The following is a version of Proposition \ref{convergence} in higher codimension case, which means any sequence in $\mathcal{E}_n(W,V,\gamma, a)$ with $\mathcal{J}(F_i)\to 0$ will converge to a  smooth minimal surface in the sphere.  The requirement $\mu_{g_i}(\Sigma_i)=V$ is no restricted since we can always scale $\Sigma_i$ such that $\mu_{g_i}(\Sigma_i)=V$ holds.
\end{remark}

\begin{proposition}\label{convergence 2} Let $n\ge 3$, $\gamma\in (0,\frac{1}{2})$,  $V, W>0$ and $0<a<\frac{\sqrt{V}\gamma}{6720W^2})$. Assume there is a sequence of $F_i:\Sigma_i\to \RR^n$ belonging to $\mathcal{E}_n(W,V,\gamma, a)$ such that
\begin{align*}
\mathcal{J}(F_i)=\int_{\Sigma_i}|\vec{H}_i+c_iF_i|^2d\mu_{g_i}\le \varepsilon_i^2\to 0.
\end{align*}
Then  $c_i\to c_\infty\in [\frac{8\pi}{V},\frac{2W}{V}]$ and  there exists an embedded minimal surface $F_\infty:\Sigma_\infty\to {\SS}^{n-1}(R_\infty)$ for $R_\infty=\sqrt{\frac{2}{c_\infty}})$ such that $$\mu_{g_\infty}(\Sigma_\infty)=V,$$ $$\lim_{i\to\infty}\int_{\Sigma_i}|\vec{H}_i|^2d\mu_{g_i}
=\int_{\Sigma_\infty}|\vec{H}_\infty|^2d\mu_{g_\infty}=2c_\infty V$$
and $F_i$ converges to $F$ in $W^{2,2}$-topology. More precisely, there exists $C>0$ and diffeomorphisms $\varphi_i: \Sigma_\infty\to \Sigma_i$ such that
\begin{enumerate}[1)]
\item $\|\varphi_i^*g_i-g_\infty\|_{L^q(\Sigma_\infty)}\to 0, \text{ for }  q\gg 1$ and
\begin{align*}
C^{-1}g_\infty\le \varphi_i^*g_i\le C g_\infty;
\end{align*}
\item the complex structure of $\Sigma_i$ converges to the complex structure of $\Sigma_\infty;$
\item $\|F_i\circ \varphi_i-F_\infty\|_{W^{2,2}(\Sigma_\infty)}\to 0$.
\end{enumerate}
\end{proposition}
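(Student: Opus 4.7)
The plan is to follow the template of Proposition \ref{convergence}, but replace the argument for strong $L^2$-convergence of the mean curvature vectors (which there used the codimension-one normal field $\vec{N}_i$) by the simpler observation that $\vec{H}_i$ is forced to be close to $-c_i F_i$ in $L^2$. That substitution actually makes the limiting equation fall out directly.

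First I would extract the constants: by the upper and lower bound for $c_i$ proved inside Lemma \ref{lemma: non-concentration of Willmore} (Step 1 there, with $\mu_{g_i}(\Sigma_i)=V$) one has $c_i\in[(16\pi-\varepsilon_i^2)/(2V),2W/V]$, so after passing to a subsequence $c_i\to c_\infty\in[8\pi/V,2W/V]$. Next, since $\mu_{g_i}(B_a(p))\le 2(1-\gamma)\pi a^2$, Lemma \ref{lemma: non-concentration of Willmore} upgrades $a$ to $r^D_{\gamma/2}(\Sigma_i)\ge a$ and gives $\int_{B_r(p)}|\vec{H}_i|^2 d\mu_{g_i}\le 2\varepsilon_i^2+2^{21}W^4V^{-1}r^2$ for all $p\in\Sigma_i$. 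Feeding this into Proposition \ref{non-concentration of total curvature} (after a harmless scaling to land in $B_1$) produces, for any $\varepsilon>0$, a uniform radius $r_1(\varepsilon)>0$ such that $\int_{B_{r_1}(p)}|A_i|^2 d\mu_{g_i}\le\varepsilon$ for all $p\in\Sigma_i$ and $i\gg 1$. Applying the isothermal-radius estimate in Theorem \ref{compactness} yields a uniform lower bound $\inf_i i_{g_i}(\Sigma_i)\ge r_0>0$ with corresponding conformal factors $u_i$ satisfying $\|u_i\|_{C^0}+\|Du_i\|_{L^2}\le\Psi(\varepsilon)$ on every isothermal disk.

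On a fixed isothermal disk $\varphi_i:D_{r_0}\to\Sigma_i$ centered at an arbitrary base point, set $\tilde F_i=F_i\circ\varphi_i-F_i(p_i)$. The mean curvature equation reads $\Delta\tilde F_i=e^{2u_i}\vec{H}_i$, and the assumption gives
\begin{align*}
\|e^{2u_i}\vec{H}_i+c_i e^{2u_i}(\tilde F_i+F_i(p_i))\|_{L^2(D_{r_0})}^2\le C\int_{B_{r_0}^{F_i}(p_i)}|\vec{H}_i+c_iF_i|^2 d\mu_{g_i}\to 0.
\end{align*}
From the uniform $L^\infty$ bound on $u_i$ and the bound on $|F_i|$ from Step 2 of Lemma \ref{lemma: non-concentration of Willmore}, the right-hand side of the PDE is uniformly bounded in $L^\infty$, so $\tilde F_i$ is uniformly bounded in $W^{2,q}_{\mathrm{loc}}$ for every $q$. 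Passing to a subsequence, $\tilde F_i\to\tilde F_\infty$ weakly in $W^{2,2}_{\mathrm{loc}}$ and strongly in $W^{1,q}_{\mathrm{loc}}$, $u_i\to u_\infty$ in $L^q_{\mathrm{loc}}$ and a.e., and $c_ie^{2u_i}F_i\circ\varphi_i\to c_\infty e^{2u_\infty}(\tilde F_\infty+F_\infty(p_\infty))$ strongly in $L^2_{\mathrm{loc}}$ by dominated convergence. Hence the limit satisfies $\Delta\tilde F_\infty=-c_\infty e^{2u_\infty}(\tilde F_\infty+F_\infty(p_\infty))$, i.e.\ $\vec{H}_\infty=-c_\infty F_\infty$ pointwise (after undoing the translation). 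Standard elliptic bootstrapping turns this into smoothness of $F_\infty$ and $u_\infty$, and interior elliptic estimates on $\Delta(\tilde F_i-\tilde F_\infty)$ upgrade the convergence to strong $W^{2,2}_{\mathrm{loc}}$.

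Now I would glue the local limits into a global smooth immersion $F_\infty:\Sigma_\infty\to\RR^n$ by Lemma A.1 and Section 4 of \cite{Sun-Zhou}, producing diffeomorphisms $\varphi_i:\Sigma_\infty\to\Sigma_i$ with (1), (2), (3) of the statement. The embeddedness of $F_\infty$ is obtained by exactly the touching-ball contradiction from the proof of Proposition \ref{convergence}: two distinct preimages would create density $\ge 2(1-C\delta)$ at a point of $\Sigma_i$ for $i$ large, violating $r^D_{\gamma/2}(\Sigma_i)\ge a$. Lemma \ref{minimal surface equation} applied to the global limit then identifies $F_\infty(\Sigma_\infty)$ with a closed embedded minimal surface in the sphere $\SS^{n-1}(R_\infty)$ of radius $R_\infty=\sqrt{2/c_\infty}$ (the case $c_\infty=0$ is excluded since $c_\infty\ge 8\pi/V>0$). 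Finally, the $L^q$ convergence $\varphi_i^*g_i\to g_\infty$ gives $\mu_{g_\infty}(\Sigma_\infty)=V$, and the strong $W^{2,2}$ convergence yields
\begin{align*}
\lim_{i\to\infty}\int_{\Sigma_i}|\vec{H}_i|^2 d\mu_{g_i}=\int_{\Sigma_\infty}|\vec{H}_\infty|^2 d\mu_{g_\infty}=c_\infty^2\int_{\Sigma_\infty}|F_\infty|^2 d\mu_{g_\infty}=c_\infty^2 R_\infty^2 V=2c_\infty V.
\end{align*}

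The main obstacle is the embeddedness step, which is where the non-concentration radius $a$ is genuinely used; the rest is a direct higher-codimension adaptation of the codimension-one argument, made easier by the fact that $\vec{H}_i\approx -c_iF_i$ in $L^2$ immediately identifies the limiting PDE without the need to track a normal field.
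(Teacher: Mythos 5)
Your proposal follows essentially the same route as the paper's proof: use Lemma \ref{lemma: non-concentration of Willmore} to extract the bounds on $c_i$ and the non-concentration of $\int|\vec{H}_i|^2$, feed into Proposition \ref{non-concentration of total curvature} and the isothermal radius estimate to get uniform conformal charts, read off the limiting equation $\Delta\tilde F_\infty=-c_\infty e^{2u_\infty}\tilde F_\infty$ from the $L^2$-closeness of $\vec{H}_i$ to $-c_iF_i$, bootstrap the limit to smoothness, glue via Sun--Zhou, argue embeddedness by the touching-ball contradiction, and invoke Lemma \ref{minimal surface equation}. The derivation of $\lim\int|\vec H_i|^2 = 2c_\infty V$ via strong $W^{2,2}$-convergence is a small, harmless variation of the paper's use of the identity $\mathcal{J}(F_i)=\int|\vec H_i|^2-c_i^2\int|F_i|^2\to 0$.

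However, one intermediate claim is wrong: ``the right-hand side of the PDE is uniformly bounded in $L^\infty$, so $\tilde F_i$ is uniformly bounded in $W^{2,q}_{\mathrm{loc}}$ for every $q$.'' The right-hand side is $e^{2u_i}\vec H_i$, and $\vec H_i$ is \emph{not} pointwise close to $-c_iF_i$; only $\|\vec H_i+c_iF_i\|_{L^2(\Sigma_i)}\to 0$, which says nothing in $L^\infty$ or even $L^q$ for $q>2$. Consequently you only get a uniform $W^{2,2}_{\mathrm{loc}}$ bound on $\tilde F_i$ (as in the paper, via $\|\tilde F_i\|_{W^{2,2}(D_{3r/4})}\le C(r)$), not $W^{2,q}_{\mathrm{loc}}$ for all $q$. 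Fortunately this overclaim is not used: the subsequent steps (weak $W^{2,2}$, strong $W^{1,q}$ convergence by Rellich, dominated convergence to identify the limit equation, then bootstrapping smoothness of the \emph{limit} $\tilde F_\infty$ alone, and finally interior elliptic estimates for $\Delta(\tilde F_i-\tilde F_\infty)$) only require the $W^{2,2}$ bound. Simply delete the $L^\infty$/$W^{2,q}$ assertion and state the $W^{2,2}$ bound, and the proof is correct.
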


\begin{proof}
Step 1.  By Lemma \ref{lemma: non-concentration of Willmore}, we know for $i$ sufficient large,
\begin{align*}
r_{\frac{\gamma}{2}}^D\ge a \text{ and } \int_{B_r(p)}|\vec{H}_i|^2d\mu_{g_i}\le 2\varepsilon_i^2+2^{21}\frac{W^4}{V}\rho^2, \forall \rho\le a.
\end{align*}
By the same argument as the proof of Proposition \ref{convergence}, we know there exists $r>0$ such that for any $p_i\in \Sigma_i$, there exists an isothermal coordinate $\varphi_i:D_r(0)\to U_i\supset B_r^{\Sigma_i}(p_i)$ with $\varphi_i(0)=p_i$, $\varphi_i^*g_i=e^{2u_i}(dx^1\otimes dx^1+dx_2\otimes dx^2)$ and $\tilde{F}_i=F_i\circ \varphi_i-F_i(p_i)$ satisfying
\begin{align}\label{uniform bound}
\|u_i\|_{C^0(D_r)}+\|Du_i\|_{L^2(D_r)}\le \Psi(\varepsilon) \text{  }
\|\tilde{F}_i\|_{W^{2,2}(D_{\frac{3}{4}r})\le C(r)}.
\end{align}
and
\begin{align}\label{equation}
\Delta \tilde{F}_i=\vec{H}_ie^{2u_i}.
\end{align}
Moreover, there exist $u_\infty\in C^0(D_{\frac{3}{4}r})$ and $\tilde{F}_\infty\in W^{2,2}(D_{\frac{3}{4}r})$ such that
\begin{align}\label{uniform bound 2}
\|u_\infty\|_{C^0(D_{\frac{3}{4}r})}+\|Du\|_{L^2(\frac{3}{4}r)}\le \Psi(\varepsilon)  ,\ \  d\tilde{F}_\infty\otimes d\tilde{F}_{\infty}=g_\infty=e^{2u_\infty}(dx^1\otimes dx^1+dx^2\otimes dx^2)
\end{align}
and
\begin{align}\label{weak limit}
\|u_i-u_\infty\|_{L^q(D_{\frac{3}{4}r})}+\|\tilde{F}_i-\tilde{F}_\infty\|_{W^{1,q}(D_{\frac{3}{4}r})}\to 0,  \forall q\gg 1.
\end{align}
By (\ref{bound of constant}), we know $c_i$ are bounded and hence converges to a limit $c_\infty \in [\frac{8\pi}{V},\frac{W}{V}]$ after passing to a subsequence. Thus by the dominated convergence theorem and (\ref{uniform bound})(\ref{uniform bound 2})(\ref{weak limit}), we know
\begin{align*}
\lim_{i\to \infty}\|\vec{H}_ie^{2u_i}+c_\infty\tilde{F}_\infty e^{2u_\infty}\|_{L^2(D_{\frac{3}{4}r})}=0.
\end{align*}
By (\ref{equation}) and (\ref{weak limit}), we know on $D_{\frac{3}{4}r}$, $\tilde{F}_\infty$ satisfies the weak equation
\begin{align}\label{limit equation}
\Delta \tilde{F}_\infty=-c_\infty \tilde{F}_\infty e^{2u_\infty}.
\end{align}
By elliptic regularity, we have the strong convergence and
\begin{align*}
\lim_{i\to \infty}\|\tilde{F}_i-\tilde{F}_\infty\|_{W^{2,2}(D_{\frac{r}{2}})}=0
\end{align*}
and
 $\tilde{F}_\infty\in W^{2,q}, \forall q\gg1$ and hence (by  (\ref{limit equation}) and (\ref{uniform bound 2})) ,
\begin{align*}
\Delta d\tilde{F}_\infty=-c_\infty d(\tilde{F}_\infty e^{2u_\infty})\in L^2(D_{\frac{3}{4}r}).
\end{align*}
Thus $d\tilde{F}_\infty\in W^{2,2}_{loc}(D_{\frac{3}{4}r})$ and hence by the conformal condition $d\tilde{F}_\infty\otimes d\tilde{F}_\infty=e^{2u_\infty}(dx^1\otimes dx^1+dx^2\otimes dx^2)$, we know $e^{2u_\infty}\in C^{\beta}$ for any $\beta\in (0,\frac{1}{2})$. Now, combining the limit equation (\ref{limit equation}) and the conformal condition, by elliptic regularity and bootstrap argument, we know $F_\infty, e^{2u_\infty}\in C^\infty(D_{\frac{3}{4}r})$.  So, (\ref{limit equation}) means
\begin{align*}
\vec{H}_\infty=\Delta_{g_\infty}\tilde{F}_\infty=-c_\infty \tilde{F}_\infty
\end{align*}
and hence by Lemma \ref{minimal surface equation} , we know $\tilde{F}_\infty(D_{\frac{3}{4}r})$ is a piece of minimal surface in ${\SS}^{n-1}(R_\infty)$, where the radius $R_\infty=\sqrt{\frac{2}{c_\infty}}$.
Again by \cite{Sun-Zhou}, argue as in the proof of Proposition \ref{convergence}, we know there is an embedding $F_\infty:\Sigma_\infty\to {\SS}^{n-1}(R_\infty)$ of a closed surface $\Sigma_\infty$ such that $F_\infty(\Sigma_\infty)$ is a minimal surface in ${\SS}^{n-1}(R_\infty)$ and item $1)2)3)$ of the conclusion holds.
So, $||F_i\circ \varphi_i|-R_\infty|_{C^0(\Sigma_\infty)}\to 1$ and hence
\begin{align*}
0=\lim_{i\to \infty} \mathcal{J}(F_i)
&=\lim_{i\to\infty}\int_{\Sigma_i}|\vec{H}_i|^2d\mu_{g_i}-{c_i^2}\int_{\Sigma_i}|F_i|^2d\mu_{g_i}\\
&=\lim_{i\to\infty}\int_{\Sigma_i}|\vec{H}_i|^2d\mu_{g_i}-c_\infty^2\int_{\Sigma_\infty}|F_\infty|^2d\mu_{g_\infty},
\end{align*}
which implies
\begin{align*}
\lim_{i\to\infty}\int_{\Sigma_i}|\vec{H}_i|^2d\mu_{g_i}=\int_{\Sigma_\infty}|\vec{H}_\infty|^2d\mu_{g_\infty}=2c_\infty V.
\end{align*}
Finally,  $\mu_{g_\infty}(\Sigma_\infty)=\lim_{i\to \infty}\mu_{g_i}(\Sigma_i)=V$ follows directly from the $L^q$ convergence of the metric for $q>2$.
 \end{proof}
 When restricting the topology of $\Sigma$ to be a sphere and the Willmore energy not exceeding $8\pi$, we get the bi-Lipsschitz quantitative rigidity of the round sphere as minimal surface in ${\SS}^{n-1}$.
 \begin{corollary}\label{quantitative minimal}For any $W<8\pi$, there exists $\varepsilon=\varepsilon(W)>0$ such that if an embedding  $F: \Sigma \to \RR^n$ of a sphere type surface $\Sigma\cong S^2$ with induced metric $g=dF\otimes dF$ satisfies
\begin{align*}
\mu_g(\Sigma)=4\pi, \frac{1}{4}\int_{\Sigma}|\vec{H}|^2d\mu_g\le W<8\pi ,
\end{align*}
and there exists a constant $c\in \RR$ such that
\begin{align*}
\mathcal{J}(F)=\int_{\Sigma}|\vec{H}+cF|^2d\mu_g\le \varepsilon^2,
\end{align*}
then there exists conformal diffeomorphism  $\tilde{F}: S^2\to F(\Sigma)\subset \RR^n$ such that   $d\tilde{F}\otimes d\tilde{F}=e^{2u}g_{{\SS}^2}$ and
\begin{align}\label{conclusion}
|c-2|+\|u-1\|_{C^0({\SS}^2)}+\|\tilde{F}-id_{{\SS}^2}\|\le \Psi(\varepsilon).
\end{align}
\end{corollary}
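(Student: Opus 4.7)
The plan is to argue by contradiction, extract a $W^{2,2}$ limit via Proposition \ref{convergence 2}, and then invoke the rigidity of embedded minimal two-spheres in round spheres to identify the limit as the unit round sphere.

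Suppose the conclusion fails. Then there exist $\eta>0$ and embeddings $F_i\colon\Sigma_i\to\RR^n$ with $\Sigma_i\cong S^2$, $\mu_{g_i}(\Sigma_i)=4\pi$, $\tfrac14\int_{\Sigma_i}|\vec H_i|^2\,d\mu_{g_i}\le W<8\pi$, and constants $c_i\in\RR$ satisfying $\mathcal J(F_i)\le\varepsilon_i^2\to 0$, yet such that no conformal parametrization of $F_i(\Sigma_i)$ by $\SS^2$ achieves the desired estimate with right-hand side less than $\eta$. First I apply Lemma \ref{lemma: non-concentration of Willmore} to $F_i$, which yields the pointwise non-concentration $\int_{B_r(p)}|\vec H_i|^2\,d\mu_{g_i}\le 2\varepsilon_i^2+C(W)r^2$. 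Choosing $\alpha\in(0,\tfrac12)$ with $4W\le 32\pi(1-\alpha)$, Lemma \ref{non-concentration criteria} then produces $\gamma\in(0,\tfrac12)$ and $a>0$, depending only on $W$, such that $r^D_\gamma(\Sigma_i)\ge a$ for $i$ large. In particular $F_i\in\mathcal E_n(W,4\pi,\gamma,a)$ eventually.

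Proposition \ref{convergence 2} now gives, after passing to a subsequence, $c_i\to c_\infty$ and an embedded minimal immersion $F_\infty\colon\Sigma_\infty\to\SS^{n-1}(R_\infty)$ with $R_\infty=\sqrt{2/c_\infty}$ and $\mu_{g_\infty}(\Sigma_\infty)=4\pi$, together with diffeomorphisms $\varphi_i\colon\Sigma_\infty\to\Sigma_i$ for which $F_i\circ\varphi_i\to F_\infty$ strongly in $W^{2,2}$. Convergence of complex structures forces $\Sigma_\infty\cong S^2$. Combining the Willmore lower bound $\int_{\Sigma_\infty}|\vec H_\infty|^2\,d\mu_{g_\infty}=8\pi c_\infty\ge 16\pi$ (valid for closed surfaces in $\RR^n$) with $8\pi c_\infty\le 4W<32\pi$ gives $c_\infty\in[2,4)$.

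The main obstacle is to sharpen this to $c_\infty=2$, i.e., to show that $F_\infty(\Sigma_\infty)$ is a totally geodesic two-sphere in $\SS^{n-1}(R_\infty)$. For this I would invoke the Calabi--Almgren classification of minimal immersions of $S^2$ into round spheres: the area of any such immersion into $\SS^{n-1}(R_\infty)$ lies in the discrete set $\{k(k+1)\cdot 2\pi R_\infty^2:k\in\ZZ_{\ge 1}\}$, with $k=1$ the totally geodesic case and all higher-degree examples failing to be embeddings of $S^2$. Since $F_\infty$ is embedded of area $4\pi$, the resulting quantization $c_\infty=2/R_\infty^2=k(k+1)$, combined with $c_\infty<4$, forces $k=1$. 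Hence $c_\infty=2$, $R_\infty=1$, and $F_\infty(\Sigma_\infty)$ is the standard unit round sphere, lying in some three-dimensional affine subspace of $\RR^n$. After a rigid motion aligning this subspace with the standard $\RR^3\subset\RR^n$ and the limit with $id_{\SS^2}$, the strong $W^{2,2}$ convergence and the uniform $C^0$ control on the conformal factor from Proposition \ref{convergence 2} furnish conformal parametrizations $\tilde F_i\colon\SS^2\to F_i(\Sigma_i)$ with $d\tilde F_i\otimes d\tilde F_i=e^{2u_i}g_{\SS^2}$ satisfying $\|\tilde F_i-id_{\SS^2}\|_{W^{2,2}}+\|u_i\|_{C^0(\SS^2)}\to 0$, together with $|c_i-2|\to 0$. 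This contradicts the failure of the conclusion and completes the proof.
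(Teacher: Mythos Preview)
Your overall strategy matches the paper's: argue by contradiction, use non-concentration of the Willmore energy together with Lemma~\ref{non-concentration criteria} to land in $\mathcal E_n(W,4\pi,\gamma,a)$, apply Proposition~\ref{convergence 2} to extract an embedded minimal limit in a round sphere, and then invoke the area quantization for minimal two-spheres in spheres to force $c_\infty=2$. One small inaccuracy: the paper uses the Calabi--Barbosa result that the area of any minimal $S^2$ in $\SS^{n-1}(1)$ is an integer multiple of $4\pi$; your formula $2\pi k(k+1)$ describes only the Boruvka spheres, not all minimal spheres. This is harmless here, since either quantization excludes areas in $(4\pi,8\pi)$.

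There is, however, a genuine gap in your last paragraph. Proposition~\ref{convergence 2} does \emph{not} supply a global conformal parametrization from $\SS^2$ with $C^0$ control on the conformal factor; it yields only diffeomorphisms $\varphi_i$ with $L^q$ convergence of the pulled-back metrics (item 1) and convergence of complex structures (item 2). The $C^0$ bounds on conformal factors appearing in its proof are for \emph{local} isothermal charts and do not automatically patch into a global $C^0$ bound on $\SS^2$. The paper fills this in differently: once $c_\infty=2$ one has $\int_{\Sigma_i}|\vec H_i|^2\,d\mu_{g_i}\to 16\pi$, so by Gauss--Bonnet (using $\Sigma_i\cong S^2$) one gets $\int_{\Sigma_i}|A^0_i|^2\,d\mu_{g_i}\to 0$, and then Theorem~\ref{quantitative umbilical} (De Lellis--M\"uller / Lamm--Sch\"atzle) furnishes the conformal parametrizations $\tilde F_i$ with the required $W^{2,2}$ and $C^0$ estimates. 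You should replace your final step with this argument.
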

\begin{proof}
We argue by contradiction. Assume the conclusion doest not hold. Then there are a sequence of embeddings $F_i:\Sigma_i\to \RR^n$ of sphere $\Sigma_i\cong S^2$ with induced metric $g_i=dF_i\otimes dF_i$ such that
$$\mu_{g_i}(\Sigma_i)=4\pi, \frac{1}{4}\int_{\Sigma_i}|\vec{H}_i|^2d\mu_{g_i}\le W<8\pi$$
and
$$\mathcal{J}(F_i)=\int_{\Sigma_i}|\vec{H}_i+c_iF_i|^2d\mu_{g_i}=\varepsilon_i^2\to 0,$$
but (\ref{conclusion}) does not hold for each $F_i$. By Lemma \ref{non-concentration for mean curvature}, Lemma \ref{non-concentration criteria} and Proposition \ref{convergence 2}, we know $c_i\to c_\infty\in [2,\frac{W}{2\pi}]\subset [2,4)$ and an minimal embedding $F_\infty:\Sigma_\infty\to {\SS}^{n-1}(R_\infty)$ such that $R_\infty=\sqrt{\frac{2}{c_\infty}}\in (\frac{1}{\sqrt{2}},1]$,
$\mu_{g_\infty}(\Sigma_\infty)=4\pi$
and
 \begin{align*}
\lim_{i\to\infty}\int_{\Sigma_i}|\vec{H}_i|^2d\mu_{g_i}=\int_{\Sigma_\infty}|\vec{H}_\infty|^2d\mu_{g_\infty}=8\pi c_\infty.
\end{align*}
 Thus $\tilde{\Sigma}_\infty=R_\infty^{-1}F_\infty(\Sigma_\infty)$ is a minimal surface in ${\SS}^{n-1}(1)$ with area $$\mu_{\tilde{g}_\infty}(\tilde{\Sigma}_\infty)=4\pi R_\infty^{-2}\in [4\pi, 8\pi).$$
 But by the theorems of Calabi\cite{Ca} and Barbosa\cite[Corollary 4.14]{B}, the area of minimal sphere in ${\SS}^{n-1}(1)$ is a multiple of $4\pi$. So, we know
 \begin{align*}
 \frac{1}{4}\int_{\tilde{\Sigma}_\infty}|\vec{\tilde{H}}_\infty|^2d\mu_{\tilde{g}_\infty}
 =\mu_{\tilde{g}_\infty}(\tilde{\Sigma}_\infty)=4\pi,
 \end{align*}
 and hence $R_\infty=1$, $c_\infty=2$ and $\tilde{\Sigma}_\infty=\Sigma_\infty$ is a round sphere $\SS^2$ in ${\SS}^{n-1}$. Finally, since
 $$\lim_{i\to\infty}\int_{\Sigma_i}|\vec{H}_i|^2d\mu_{g_i}=8\pi c_\infty=16\pi,$$
 by Gauss-Bonnet formula, we know
 $
 \int_{\Sigma_i}|A_i^0|^2d\mu_{g_i}\to 0
 $
 and  hence by \cite[Theorem 1.2]{LaSc}, (\ref{conclusion}) holds for $F_i$ when $i$ is sufficient large. This is a contradiction.
\end{proof}
\begin{remark} Since $\Sigma\cong S^2$ implies $$\frac{1}{4}\int_{\Sigma}|\vec{H}|^2d\mu_g\le W\text{ if and only if } \|A^0\|_{L^2(\Sigma)}^2<2(W-4\pi),$$
by \cite[Proposition 3,1]{LaSc},  the existence of the above conformal parametrization follows directly if $W<4\pi+e_n$ for $e_n$ from \cite{Sch13} defined by
\begin{align*}
e_n:=\left\{
\begin{aligned}
4\pi &\ \text{ for } n=3, \\
\frac{8\pi}{3} &\ \text{ for } n=4, \\
2\pi &\ \text{ for } n\ge 5.
\end{aligned}
\right.
\end{align*}
So, the non-trivial part for the above corollary is when $W\in [4\pi+e_n,8\pi)$.
\end{remark}

\begin{remark}  Since there are many minimal surfaces(e.g. Lawson's surface $\xi_{1,g}$) in ${\SS}^{n-1}$ with Willmroe energy smaller than $8\pi$ and nontrivial topology,  the topological assumption $\Sigma\approx S^2$ is necessary in the above result.  When considering surfaces with varying topology, we get the topological finiteness of minimal surfaces in the sphere with area uniformly smaller than $8\pi$.
\end{remark}
\begin{corollary}
For $\alpha>0$, define
\begin{align*}
\mathcal{M}(\alpha, n):=\{\Sigma| \Sigma \text{ is  minimal surface in } {\SS}^{n-1} \text{ with } \mu_g(\Sigma)\le (1-\alpha)8\pi\}.
\end{align*}
Then, any  sequence $\{\Sigma_i\} \subset\mathcal{M}(\alpha,n)$ converges smoothly to a limit $\Sigma_\infty\in \mathcal{M}(\alpha,n)$.
\end{corollary}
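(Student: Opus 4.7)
The plan is to deduce the corollary from Proposition \ref{convergence 2}. Any $\Sigma_i\in\Mm(\alpha,n)$ satisfies $\vec{H}_i+2F_i=0$ by Lemma \ref{minimal surface equation}, so $\mathcal{J}(F_i)=0$ trivially, and $\tfrac14\int_{\Sigma_i}|\vec{H}_i|^2d\mu_{g_i}=\mu_{g_i}(\Sigma_i)\in[4\pi,8\pi(1-\alpha)]=:[4\pi,W]$ by the Willmore inequality. After passing to a subsequence $V_i:=\mu_{g_i}(\Sigma_i)\to V_\infty\in[4\pi,W]$, and a small scaling $F_i\mapsto\sqrt{V_\infty/V_i}\,F_i$ normalises the area to $V_\infty$ while moving $\Sigma_i$ to a sphere of radius tending to $1$; this rescaling will be undone at the end.

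First I would verify the density non-concentration hypothesis of Proposition \ref{convergence 2}. Applying Lemma \ref{lemma: non-concentration of Willmore} with $\varepsilon_i=0$ gives the uniform bound $\int_{B_r(p)}|\vec{H}_i|^2d\mu_{g_i}\le 2^{21}W^4 V_\infty^{-1}r^2$ for every $p\in\Sigma_i$, so the Willmore energy does not concentrate. Combined with the bound $\int|\vec{H}_i|^2d\mu_{g_i}\le 32\pi(1-\alpha)<32\pi$, Lemma \ref{non-concentration criteria} (with any $\gamma\in(0,\alpha)$) produces $a>0$ such that $r^D_\gamma(\Sigma_i)\ge a$ for all large $i$. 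Thus the rescaled sequence lies in $\Ee_n(W,V_\infty,\gamma,a)$ with $\mathcal{J}(F_i)\equiv 0$, and Proposition \ref{convergence 2} supplies diffeomorphisms $\varphi_i:\Sigma_\infty\to\Sigma_i$, a limit constant $c_i\to c_\infty$, and an embedded minimal limit $F_\infty:\Sigma_\infty\to\SS^{n-1}(R_\infty)$ with $R_\infty=\sqrt{2/c_\infty}$ and $F_i\circ\varphi_i\to F_\infty$ in $W^{2,2}$. Since $c_i=2$ identically, $c_\infty=2$, $R_\infty=1$, and $\mu_{g_\infty}(\Sigma_\infty)=V_\infty\le 8\pi(1-\alpha)$, so $\Sigma_\infty\in\Mm(\alpha,n)$.

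To upgrade the $W^{2,2}$-convergence to smooth convergence I would bootstrap in the uniform isothermal charts produced by the proof of Proposition \ref{convergence 2}. In these charts $\Delta\tilde{F}_i=\vec{H}_ie^{2u_i}=-2\tilde{F}_ie^{2u_i}$ and $\|u_i\|_{C^0}\le\Psi(\varepsilon)$; the $W^{2,2}$-convergence of $\tilde{F}_i$ together with the $C^0$-bound on $u_i$ makes the right-hand side converge in every $L^q$, hence $\tilde{F}_i\to\tilde{F}_\infty$ in $W^{2,q}_{loc}$ and so in $C^{1,\beta}$. Coupling with the Liouville-type equation $\Delta u_i=(\tfrac12|A_i|^2-1)e^{2u_i}$ for the conformal factor and iterating Schauder estimates then yields $C^k$-convergence for every $k$, after which the charts glue as in \cite{Sun-Zhou} to smooth convergence on $\Sigma_\infty$.

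The main obstacle is the density non-concentration step: a priori a sequence of bounded-area minimal surfaces in $\SS^{n-1}$ could bubble off with limiting density $\ge 2$ (as in Figure \ref{figure: collapse}), and it is precisely the sharp Willmore threshold $W<8\pi$ that closes this possibility through Lemma \ref{non-concentration criteria}. The rescaling to fix the area exactly at $V_\infty$ and the elliptic bootstrap are technical but routine.
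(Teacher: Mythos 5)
Your proposal is correct and takes essentially the same route as the paper: normalize the area by scaling, verify density non-concentration via Lemma \ref{lemma: non-concentration of Willmore} and Lemma \ref{non-concentration criteria}, and invoke Proposition \ref{convergence 2}, bootstrapping the $W^{2,2}$ convergence to smooth convergence in isothermal charts. The only cosmetic difference is that the paper rescales each $\Sigma_i$ by $d_i=\sqrt{4\pi/V_i}$ so that every rescaled surface has area exactly $4\pi$ (letting $c_i=2/d_i^2$ carry the scaling data), whereas you first pass to a subsequence with $V_i\to V_\infty$ and normalize to $V_\infty$; both choices feed into Proposition \ref{convergence 2} in the same way and yield $c_\infty=2$, $R_\infty=1$.
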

\begin{proof}
Assume $\Sigma_i\in \mathcal{M}(n,\alpha)$ is a sequence of minimal surface in ${\SS}^{n-1}$  with area $\mu_{g_i}(\Sigma_i)=V_i\le (1-\alpha)8\pi$.  Then, by $\vec{H}=-2F$, we know that
\begin{align*}
4\pi\le \frac{1}{4}\int_{\Sigma_i}|\vec{H}_i|^2d\mu_{g_i}=\mu_{g_i}(\Sigma_i)\le (1-\alpha)8\pi.
\end{align*}
Letting $d_i=\sqrt{\frac{4\pi}{V_i}}$ and $\tilde{\Sigma}_i=d_i\Sigma_i$, then  $d_i\in (\sqrt{\frac{1}{2(1-\alpha)}},1]$, $\mu_{\tilde{g}_i}(\tilde{\Sigma}_i)=4\pi$ and
$$
\vec{\tilde{H}}=-\frac{2}{d_i^2}\tilde{F}_i.
$$
  Letting $c_i=\frac{2}{d_i^2}$ and noting the Willmore energy is scaling invariant, by the same argument as in the proof of Corollary \ref{quantitative minimal}, we know $d_i\to d_\infty, c_i\to c_\infty=\frac{2}{d_\infty^2}$  and $\tilde{\Sigma}_i$ converges smoothly to a limit minimal surface in $\tilde{\Sigma} _\infty$ in ${\SS}^{n-1}(R_\infty)$ with $R_\infty=\sqrt{\frac{2}{c_\infty}}=d_\infty$ and $\mu_{\tilde{g}_\infty}(\tilde{\Sigma}_\infty)=4\pi$. Letting $\Sigma=\frac{1}{d_\infty}\tilde{\Sigma}$, we know
  \begin{align*}
  \Sigma_i=\frac{1}{d_i}\tilde{\Sigma}_i\to \frac{1}{d_\infty}\tilde{\Sigma}_\infty=\Sigma_\infty,
  \end{align*}
  and $\tilde{\Sigma}_\infty\in \mathcal{M}(\alpha,n)$.
\end{proof}
This corollary is a minimal surface version of the  compactness of Willmore surfaces with Willmore energy uniformly smaller than $8\pi$ \cite{KLS}. The  minimality assumption guarantees us not needing to composite the M\"{o}bius transformations. The sequence of Lawson's minimal surfaces also shows the assumption $\mu_{g_i}(\Sigma_i)\le (1-\alpha)8\pi$ is sharp.
\begin{bibdiv}

\begin{biblist}
\bib{Al}{article}{
 author={Allard, William K.},
   title={On the first variation of a varifold},
   journal={Ann. of Math.},
   volume={95(2)},
   date={1972},
   number={},
   pages={417-491},
    issn={},
   review={MR307015 (46\#6136)},
   doi={},
   }
\bib{An}{article}{
 author={Arnold, Randolf},
   title={On the Aleksandrov-Fenchel inequality and the stability of the sphere},
   journal={Monatsh. Math.},
   volume={115},
   date={1993},
   number={1-2},
   pages={1-11},
    issn={},
   review={MR1223240 (94g:52007)},
   doi={},
   }
   \bib{B}{article}{
 author={Barbosa, João Lucas Marquês},
   title={On minimal immersions of $S^2$ into $S^{2m}$},
   journal={Trans. Amer. Math. Soc.},
   volume={ 210},
   date={1975},
   number={},
   pages={75–106},
    issn={},
   review={MR375166 (51\#11362)},
   doi={},
   }
\bib{B-1978}{book}{
  title={The motion of a surface by its mean curvature},
  author={Brakke, Kenneth A.},
  publisher={Princeton University Press},
  year={1978},
}
\bib{BZ-2022}{article}{
title={Bi-Lipschitz Regularity of 2-Varifolds with Critical Allard Condition}
author={Bi, Yuchen}
author={Zhou, Jie}
journal={preprint}
}

\bib{Bu}{article}{
 author={Butscher, Adrian},
   title={A gluing construction for prescribed mean curvature},
   journal={Pacific J. Math.},
   volume={249},
   date={2011},
   number={2},
   pages={257-269},
    issn={},
   review={MR2782669 (2012c:53010) },
   doi={},
   }
   \bib{Ca}{article}{
 author={Calabi, Eugenio},
   title={Minimal immersions of surfaces in Euclidean spheres},
   journal={J. Differential Geometry},
   volume={ 1},
   date={1967},
   number={},
   pages={111–125},
    issn={},
   review={MR233294 (38\#1616)},
   doi={},
   }
\bib{CiFiMaNo}{article}{
 author={Ciraolo, Giulio},
 author={Figalli, Alessio},
   author={Maggi, Francesco},
    author={Novaga, Matteo}
   title={Rigidity and sharp stability estimates for hypersurfaces with constant and almost-constant nonlocal mean curvature},
   journal={J. Reine Angew. Math.},
   volume={ 741},
   date={2018},
   number={},
   pages={275-294},
    issn={},
   review={MR3836150},
   doi={},
   }

\bib{CiMa}{article}{
 author={Ciraolo, Giulio},
   author={Maggi, Francesco},
   title={On the shape of compact hypersurfaces with almost-constant mean curvature},
   journal={Comm. Pure Appl. Math.},
   volume={ 70},
   date={2017},
   number={4},
   pages={665-716},
    issn={},
   review={MR3628882},
   doi={},
   }
   \bib{CiRoVe}{article}{
 author={Ciraolo, Giulio},
   author={Roncoroni, Alberto},
   author={ Vezzoni, Luigi}
   title={Quantitative stability for hypersurfaces with almost constant curvature in space forms},
   journal={ Ann. Mat. Pura Appl.},
   volume={(4)200},
   date={2021},
   number={5},
   pages={2043-2083},
    issn={},
   review={MR4285109},
   doi={},
   }
   \bib{CiVe}{article}{
 author={Ciraolo, Giulio},
   author={Vezzoni, Luigi},
   title={A sharp quantitative version of Alexandrov's theorem via the method of moving planes},
   journal={ J. Eur. Math. Soc.},
   volume={ 20},
   date={2018},
   number={2},
   pages={261-299},
    issn={},
   review={MR3760295},
   doi={},
   }
 \bib{CiVe2}{article}{
 author={Ciraolo, Giulio},
   author={Vezzoni, Luigi},
   title={Quantitative stability for hypersurfaces with almost constant mean curvature in the hyperbolic space},
   journal={Indiana Univ. Math. J.},
   volume={ 69},
   date={2020},
   number={4},
   pages={1105-1153},
    issn={},
   review={MR4124123},
   doi={},
   }
 \bib{DeMaMiNe}{article}{
 author={Delgadino, Matias G.},
   author={Maggi, Francesco},
   author={Mihaila, Cornelia},
   author={Neumayer, Robin},
   title={Bubbling with $L^2$-almost constant mean curvature and an Alexandrov-type theorem for crystals},
   journal={ Arch. Ration. Mech. Anal.},
   volume={230},
   date={2018},
   number={3},
   pages={1131-1177},
    issn={},
   review={MR3851057},
   doi={},
   }
\bib{dLMu}{article}{
 author={De Lellis, Camillo},
   author={Müller, Stefan},
   title={Optimal rigidity estimates for nearly umbilical surfaces},
   journal={ J. Differential Geom.},
   volume={ 69},
   date={2005},
   number={1},
   pages={75-110},
    issn={},
   review={MR2169583 (2006e:53078)},
   doi={},
   }
\bib{dLMu2}{article}{
 author={De Lellis, Camillo},
   author={Müller, Stefan},
   title={ A $C^0$ estimate for nearly umbilical surfaces},
   journal={ Calc. Var. Partial Differential Equations},
   volume={ 26},
   date={2006},
   number={3},
   pages={283-296},
    issn={},
   review={MR2232206 (2007d:53003)},
   doi={},
   }
 \bib{DeMa}{article}{
 author={Delgadino, Matias Gonzalo},
   author={Maggi, Francesco},
   title={Alexandrov's theorem revisited},
   journal={Anal. PDE},
   volume={12},
   date={2019},
   number={6},
   pages={1613-1642},
    issn={},
   review={MR3921314},
   doi={},
   }
 \bib{Di}{article}{
 author={Diskant, V. I.},
   title={Convex surfaces with bounded mean curvature. (Russian)},
   journal={ Sibirsk. Mat. Zh.},
   volume={12},
   date={1971},
   number={},
   pages={659-663},
    issn={},
   review={English translation: Siberian Math. J.12, 469–472 (1971)},
   doi={},
   }
   \bib{Du1}{article}{
 author={Duggan, John P.},
   title={$W^{2,p}$ regularity  for varifolds with mean curvature},
   journal={Comm. Partial Differential Equations},
   volume={11},
   date={1986},
   number={9},
   pages={903-926},
    issn={},
   review={MR844169 (87m:49086)},
   doi={},
   }
\bib{Du}{article}{
 author={Duggan, John P.},
   title={Regularity theorems for varifolds with mean curvature},
   journal={Indiana Univ. Math. J.},
   volume={35},
   date={1986},
   number={1},
   pages={117-144},
    issn={},
   review={MR825631 (87e:49065)},
   doi={},
   }\bib{SM17}{article}{
 author={Kolasiński, Sławomir},
 author={Menne, Ulrich}
   title={Decay rates for the quadratic and super-quadratic tilt-excess of integral varifolds},
   journal={NoDEA Nonlinear Differential Equations Appl.},
   volume={24},
   date={2017},
   number={2},
   pages={56pp},
    issn={},
   review={MR3625810},
   doi={},
   }
    \bib{KLS}{article}{
 author={Kuwert, Ernst},
 author={Li, Yuxiang},
   author={Schätzle, Reiner Michael},
   title={ The large genus limit of the infimum of the Willmore energy},
   journal={Amer. J. Math.},
   volume={132 },
   date={2010},
   number={1},
   pages={37-51},
    issn={},
   review={},
   doi={},
   }
\bib{KS}{article}{
 author={Kuwert, Ernst},
   author={Schätzle, Reiner Michael},
   title={ Removability of point singularities of Willmore surfaces},
   journal={Annals of Mathematics},
   volume={160 },
   date={2004},
   number={1},
   pages={315-357},
    issn={},
   review={},
   doi={},
   }
\bib{LaSc}{article}{
 author={Lamm, Tobias},
   author={Schätzle, Reiner Michael},
   title={ Optimal rigidity estimates for nearly umbilical surfaces in arbitrary codimension},
   journal={Geom. Funct. Anal.},
   volume={24 },
   date={2014},
   number={6},
   pages={2029-2062},
    issn={},
   review={MR3283934},
   doi={},
   }

   \bib{LaNg}{article}{
 author={Lamm, Tobias},
   author={ Nguyen, Huy},
   title={ The Quantitative rigidity results for conformal immersions},
   journal={ Amer. J. Math.},
   volume={136},
   date={2014},
   number={5},
   pages={1409-1440},
    issn={},
   review={MR3263902},
   doi={},
   }
\bib{L70}{article}{
 author={Lawson, H. Blaine, Jr.},
   title={Complete minimal surfaces in $S^3$},
   journal={Ann. of Math.},
   volume={2 },
   date={1970},
   number={92},
   pages={335-374},
    issn={},
   review={MR270280 (42\#5170)},
   doi={},
   }

 \bib{M09}{article}{
 author={Menne, Ulrich},
   title={Some applications of the isoperimetric inequality for integral varifolds},
   journal={Adv. Calc. Var.},
   volume={2},
   date={2009},
   number={3},
   pages={247-269},
    issn={},
   review={MR2537022 (2011g:49084)},
   doi={},
   }
\bib{M10}{article}{
 author={Menne, Ulrich},
   title={A Sobolev Poincaré type inequality for integral varifolds},
   journal={Calc. Var. Partial Differential Equations},
   volume={38},
   date={2010},
   number={3-4},
   pages={369-408},
    issn={},
   review={MR2647125 (2011d:49069)},
   doi={},
   }

  \bib{Mo}{article}{
 author={Moore, John Douglas},
   title={  Almost spherical convex hypersurfaces},
   journal={Trans. Amer. Math. Soc.},
   volume={180},
   date={1973},
   number={},
   pages={347-358},
    issn={},
   review={},
   doi={},
   }
 \bib{MS95}{article}{
 author={M\"{u}ller, S.},
 author={\v{S}ver\'{a}k, V.}
   title={On surfaces of finite total curvature. },
   journal={J. Differential Geom.},
   volume={42},
   date={1995},
   number={2},
   pages={229-258},
    issn={},
   review={MR1366547 (97b:53007)},
   doi={},
   }
\bib{R60}{article}{
 author={Reifenberg, E. R.},
   title={Solution of the Plateau Problem for m-dimensional surfaces of varying topological type},
   journal={Acta Math.},
   volume={104 },
   date={1960},
   number={ },
   pages={1-92},
    issn={},
   review={MR114145 (22\#4972) },
   doi={},
   }
    \bib{Sch13}{article}{
 author={Schätzle, Reiner Michael},
   title={Estimation of the conformal factor under bounded Willmore energy},
   journal={Math. Z.},
   volume={ 274 },
   date={2013},
   number={3-4},
   pages={1341–1383},
    issn={},
   review={MR3078270},
   doi={},
   }
    \bib{Sc}{article}{
 author={ Schneider, Rolf},
   title={A stability estimate for the Aleksandrov-Fenchel inequality, with an application to mean curvature},
   journal={ Manuscripta Math.},
   volume={ 69 },
   date={1990},
   number={ 3},
   pages={291-300},
    issn={},
   review={MR1078360 (92c:52011)},
   doi={},
   }
   \bib{S}{article}{
 author={ Simon, Leon},
   title={ Existence of surfaces minimizing the Willmore functional},
   journal={Comm. Anal. Geom.},
   volume={ 1 },
   date={1993},
   number={ 2},
   pages={281-326},
    issn={},
   review={MR1243525 (94k:58028)},
   doi={},
   }

   \bib{Sun-Zhou}{article}{
   author={Sun, Jianxin},
   author={Zhou, Jie},
   title={Compactness of surfaces in $\RR^n$ with small total curvature},
   journal={J. Geom. Anal.},
   volume={31},
   date={2021},
   number={8},
   pages={8238-8270},
    issn={},
   review={MR4293931},
   doi={10.1007/s12220-020-00583-z},
   }
   
\bib{T-2019}{book}{
  title={Brakke's Mean Curvature Flow: An Introduction},
  author={Tonegawa, Yoshihiro},
  year={2019},
  publisher={Springer}
}

   \bib{T94}{article}{
   author={Toro, Tatiana},
   title={Surfaces with generalized second fundamental form in $L^2$ are Lipschitz manifolds},
   journal={J. Differential Geom.},
   volume={39},
   date={1994},
   number={1},
   pages={65-101},
   issn={},
   review={\MR{1258915 (95b:49066)}},
   doi={},
}
 \bib{T95}{article}{
   author={Toro, Tatiana},
   title={Geometric conditions and existence of bi-Lipschitz parameterizations},
   journal={Duke Math. J.},
   volume={77},
   date={1995},
   number={1},
   pages={193-227},
   issn={},
   review={\MR{1317632 (96b:28006)}},
   doi={},
}
   \bib{W05}{article}{
   author={White, Brian},
   title={A local regularity theorem for mean curvature flow},
   journal={Ann. of Math. (2)},
   volume={161},
   date={2005},
   number={3},
   pages={1487-1519},
   issn={0003-486X},
   review={\MR{2180405 (2006i:53100)}},
   doi={10.4007/annals.2005.161.1487},
}
\bib{Z22}{article}{
   author={Zhou, Jie},
   title={Topology of surfaces with finite Willmore energy},
   journal={Int. Math. Res. Not.},
   volume={},
   date={2022},
   number={9},
   pages={7100–7151},
   issn={},
   review={MR4411476},
   doi={},
}
\end{biblist}

\end{bibdiv}

\end{document}